\newtheorem{theorem}{Theorem}[section]
\newtheorem{lemma}[theorem]{Lemma}
\newtheorem{corollary}[theorem]{Corollary}
\theoremstyle{definition}
\newtheorem{definition}[theorem]{Definition}
\theoremstyle{remark}
\numberwithin{equation}{section}
\renewcommand{\geq}{\geqslant}
\renewcommand{\leq}{\leqslant}
\newcommand{\Fpsq}{\mathbb{F}_{p^2}}
\newcommand{\Fq}{\mathbb{F}_{q}}
\theoremstyle{plain} 
\newtheorem{proposition}[theorem]{Proposition}
\newtheorem{cor}[theorem]{Corollary}
\theoremstyle{definition}
\newcommand{\N}{\mathbb{N}}
\newcommand{\indicator}{\mathbbm{1}}
\newcommand{\Ftwo}{\mathbb{F}_2}
\theoremstyle{remark}
\title{Clebsch-Gordan and the theta filtration for modular representations of $\mathrm{GL}_2({\mathbb F}_q)$}
\author{Srijeet Bhattacharjee,  Eknath Ghate, Shivansh Pandey, Sriram Veerapaneni
}
\address{Theoretical Statistics and Mathematics Unit, Indian Statistical Institute, 203 Barrackpore, Trunk Road, Kolkata 700108, West Bengal, India.}
\address{School of Mathematics, Tata Institute of Fundamental Research, Homi Bhabha Road, Mumbai 400005, India.}
\address{School of Mathematics, Tata Institute of Fundamental Research, Homi Bhabha Road, Mumbai 400005, India.}
\address{Department of Mathematics and Statistics, Indian Institute of Technology Kanpur, Kalyanpur, Kanpur 208016, Uttar Pradesh, India.}
\address{}
\email{srijeet.b2005@gmail.com}
\email{eghate@math.tifr.res.in}
\email{shivansh@math.tifr.res.in}
\email{sriramcv22@iitk.ac.in}
\date{\today}
\begin{document}

\maketitle 
\begin{abstract}
Let $p$ be a prime. We solve two problems in the mod $p$ representation theory of $\mathrm{GL}_2(\mathbb{F}_{q})$ where $q=p^f$.  We first prove a Clebsch-Gordan decomposition theorem for the tensor product of two  mod $p$ representations of  $\mathrm{GL}_2(\mathbb{F}_{q})$. As an application, we use this to guess the structure of quotients of 
symmetric power representations of $\mathrm{GL}_2(\mathbb{F}_{q})$ by submodules in the theta filtration. We then give a direct proof of this structure showing that such quotients are built out of
principal series representations. 
\end{abstract}

\section{Introduction}
\label{sec:intro}
The Clebsch-Gordan theorem is a fundamental result in representation theory over the complex numbers and has applications to quantum mechanics. It describes how the tensor product of two irreducible representations decomposes as a direct sum of irreducible sub-representations. For example, the Clebsch-Gordan theorem for the group $\mathrm{SU}_2(\mathbb C)$ says that the tensor product of two symmetric power representations of degrees $m \leq n$ of the standard representation is a direct sum over such symmetric power representations of degree varying between the difference $n-m$ and the sum $m+n$ of the original degrees. 


Tensor products also arise naturally in modular (more precisely mod $p$, for $p$ a prime) representation theory.
A Clebsch-Gordan theorem for mod $p$ representations of $\mathrm{GL}_2(\mathbb F_p)$ was proved by Glover \cite{glover}, where a complete description of the tensor product of two irreducible (symmetric power) representations is given. We note that the indecomposable constitutents in the decomposition may no longer be irreducible since as is well-known modular representation theory involves non semi-simple objects. The situation becomes even more difficult for the group $\mathrm{GL}_2(\mathbb F_q)$ for general $q = p^f$ for 
$f \geq 1$. Special cases of the Clebsch-Gordan theorem 
in this setting can be found in the literature (see, 
for instance, \cite{F.M.}). In this paper, we study the Clebsh-Gordan theorem for $\mathrm{GL}_2(\mathbb F_q)$ for $q=p^f$ in some degree 
of generality. We hope the results we obtain will be of independent interest.

As an application, we use our Clebsch-Gordan theorem
to study the structure of quotients of symmetric power representations by submodules in the theta filtration. Recall that the Dickson polynomial $\theta$ is given by $x^py-xy^p$ and has the property that $\mathrm{GL}_2({\mathbb F}_p)$ acts on it via the determinant character. Divisibility by various powers of this polynomial define a filtration - called the theta filtration - on the symmetric power representations of $\mathrm{GL}_2({\mathbb F}_p)$ which are modeled on homogeneous polynomials in the variables $x$ and $y$. It is well known that the sub-quotients in this filtration are principal series representations for
$\mathrm{GL}_2({\mathbb F}_p)$. In particular, the quotient of the symmetric power representation by the submodule
$\langle \theta^{m+1} \rangle$ generated by $\theta^{m+1}$ for $m \geq 0$ 
is built out of principal series representations. We wish to generalize
this result to the case of $\mathrm{GL}_2({\mathbb F}_q)$. 

In order to do this, we introduce some notation. 
For $r=(r_0,r_1,...,r_{f-1})$ a tuple of integers with $r_i \geq 0$ let $V_{r}$ or more simply just $(r_0,r_1,...,r_{f-1})$ denote the symmetric power representation of $\mathrm{GL}_2(\mathbb F_q)$ 
given by $$V_{r}:=\bigotimes_{i=0}^{f-1}(\mathrm{Sym}^{r_i} {\mathbb F}_q^2\circ \mathrm{Fr}^i)^{},$$ 
where $\mathrm{Fr}^i$ denotes the $i$-th Frobenius twist. The $i$-th component in the tensor product above is modeled on homogeneous polynomials over ${\mathbb F}_q$ of degree $r_i$ in the variables $x_i$ and $y_i$.
In \cite{GhateJana+2025+1503+1543}, the authors introduce
the twisted Dickson polynomials (or Ghate-Jana polynomials) 
$$\theta_i=x_iy_{i-1}^p-y_ix_{i-1}^p$$ for $i \in \{0,1,...,f-1\}$ with the convention $-1=f-1$ 
and studied, for $m=(m_0,m_1,...,m_{f-1})$ a tuple of integers with  $0 \leq m_i \leq p-1$,  the quotient 
$$\frac{V_{r}}{\langle\theta_0^{m_0+1},\theta_1^{m_1+1},...,\theta_{f-1}^{m_{f-1}+1}\rangle}.$$ Such quotients are expected to appear naturally
when one is computing the reductions of Hilbert modular 
Galois representations. In any case, as in the case of $f=1$, one might ask whether this quotient representation is also built out of principal series representations.

In \cite{GhateJana+2025+1503+1543}, Ghate-Jana show  
that the above quotient is isomorphic to the tensor product of the principal series representation of  $\mathrm{GL}_2({\mathbb F}_q)$ obtained by inducing  
the character $d^{r-m}$ for $r-m:=\sum_{i=0}^{f-1}(r_i-m_i)p^i$ of the subgroup $B({\mathbb F}_q)$ of upper triangular matrices of $\mathrm{GL}_2({\mathbb F}_q)$, and the symmetric power representation $V_{m}$:
$$\mathrm{ind}^{\mathrm{GL}_2(\mathbb F_q)}_{B(\mathbb F_q)}d^{r-m} \otimes V_{m}.$$
 Since the Jordan-H\"older
factors of principal series representations are well known
\cite{Breuil}, \cite{diamond},
we may use our Clebsch-Gordan theorem to obtain information about the irreducible representations one obtains upon tensoring the principal series above with 
$V_{m}$. Packaging these irreducible representations 
allows us to guess the structure of the above quotient. For instance, in the first interesting test case of $f = 2$ and $m_0 = m_1=1$,  the above quotient  
indeed appears to be built out of four principal series representations.
If the $m_i$ are all equal to some common $m \geq 0$ for each $i$, let $V_{r}^{(m+1)}:=\langle \theta_0^{m+1},\theta_1^{m+1},...,\theta_{f-1}^{m+1} \rangle$.  (Following Glover, we also sometimes write $V_{r}^*$ for $V_{r}^{(1)}$ and $V_{r}^{**}$ for $V_{r}^{(2)}$.) By taking $m = \max \{m_i\}$, the quotient
above is a homomorphic image of the quotient $V_{r}/V_{r}^{(m+1)}$, so as far as studying Jordan-H\"older factors is concerned we may restrict our study to the special
latter case. We then spend the rest of the paper showing that $V_{r}/V_{r}^{(m+1)}$ for general $f \geq 1$ and $m \geq 0$ is built out of $(m+1)^f$ principal series representations. This is done by a direct method which studies certain sub-quotients in the theta filtration.

\section{Clebsch-Gordan Theorem}

The Clebsch-Gordan decomposition was studied by Glover \cite{glover} for the group $\mathrm{GL}_2(\mathbb F_q)$ for $q = p$. Here, we compute the tensor product of two symmetric power representations for an arbitrary finite field $\mathbb F_q$ for $q =p^f$ for $f \geq 1$.  

\subsection{Clebsch-Gordan decomposition for GL\(_2(\Fpsq)\)} We begin by proving the  Clebsch-Gordan decomposition for GL\(_2(\Fpsq)\).
For non-negative integers $m_0,m_1$, let $(m_0,m_1)$ denote the collection of bihomogeneous polynomials over $\mathbb{F}_{p^2}$ in the variables $x_0,y_0,x_1,y_1$ of degree $m_0$ in $x_0,y_0$ and degree $m_1$ in $x_1,y_1.$ Then $(m_0,m_1)$ is a $\mathrm{GL}_2(\Fpsq)$-representation under the action
\[\alpha \cdot P_0(x_0,y_0)P_1(x_1,y_1) \mapsto P_0(ax_0+cy_0,bx_0+dy_0)P_1(a^px_1+c^py_1,b^px_1+d^py_1),\]
where $\alpha=\left(\begin{smallmatrix}
    a&b\\c&d
\end{smallmatrix}\right).$
It is not difficult to see that $(m_0,0) \otimes (0,m_1) \cong (m_0,m_1) $ via the map $P_0(x_0,y_0)\otimes P_1(x_1,y_1)\mapsto P_0(x_0,y_0) P_1(x_1,y_1).$ 

To derive our Clebsch-Gordan formula, we construct two exact sequences. The following folklore result can be found in 
Kouwenhoven \cite[Proposition 1]{F.M.} and also appears as  \cite[Theorem 2.10]{doi:10.1142/S0219498826502701}. Here and below, we adopt the convention that $(m_0,m_{1})$ is the zero representation if any of the entries $m_i$ are negative.

\begin{lemma}\label{shaliniabhik}
    Let $m_0,n_0 \geq 0$. Then we have an exact sequence of $\mathrm{GL_2}(\Fpsq)$-representations:
    \[0 \to (m_0-1,0) \otimes(n_0-1,0)\otimes \det \xrightarrow{} (m_0,0) \otimes (n_0,0) \xrightarrow{} (m_0+n_0,0) \to 0.\]
    Moreover, this sequence splits 
    if $p \nmid \binom{n_0+m_0}{n_0}$. 
\end{lemma}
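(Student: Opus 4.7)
The plan is to write down both maps explicitly and then check exactness by a combination of dimension count and a short injectivity argument, with the splitting coming from a $\mathrm{GL}_2$-equivariant section constructed by polarization. For the right-hand arrow I would take $\mu : (m_0,0)\otimes (n_0,0)\to (m_0+n_0,0)$, $P\otimes Q\mapsto PQ$, which is visibly $\mathrm{GL}_2(\Fpsq)$-equivariant and surjective. For the left-hand arrow, using the canonical generator $x_0\wedge y_0$ of $\det$, I would take the Koszul-type map
\[
\phi : P\otimes Q\otimes (x_0\wedge y_0) \;\longmapsto\; x_0 P\otimes y_0 Q - y_0 P\otimes x_0 Q.
\]
Equivariance follows from $g\cdot (x_0\wedge y_0)=\det(g)\cdot(x_0\wedge y_0)$ after a short bilinear expansion, and $\mu\circ\phi=0$ is immediate. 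A dimension count gives $\dim\ker\mu=(m_0+1)(n_0+1)-(m_0+n_0+1)=m_0 n_0$, matching $\dim\bigl((m_0-1,0)\otimes(n_0-1,0)\bigr)$, so exactness reduces to injectivity of $\phi$.

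To check injectivity, I would expand $\phi\bigl(\sum c_{ij}\, x_0^{m_0-1-i}y_0^i\otimes x_0^{n_0-1-j}y_0^j\bigr)$ in the standard monomial basis of $(m_0,0)\otimes(n_0,0)$; the coefficient of $x_0^{m_0-a}y_0^a\otimes x_0^{n_0-b}y_0^b$ works out to $c_{a,\,b-1}-c_{a-1,\,b}$, with $c_{ij}$ extended by zero off the rectangle $0\leq i\leq m_0-1$, $0\leq j\leq n_0-1$. The boundary equations force $c_{ij}=0$ along all four edges of this rectangle, while the interior equations force $c_{ij}$ to be constant along the anti-diagonals $i+j=\text{const}$. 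Since every anti-diagonal meets the boundary, $c_{ij}\equiv 0$.

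For the splitting under the hypothesis $p\nmid\binom{m_0+n_0}{n_0}$, I would construct a $\mathrm{GL}_2(\Fpsq)$-equivariant map $s:(m_0+n_0,0)\to (m_0,0)\otimes (n_0,0)$ by the polarization formula
\[
s(x_0^a y_0^b) \;:=\; \sum_{i+j=m_0}\binom{a}{i}\binom{b}{j}\, x_0^i y_0^j \otimes x_0^{a-i} y_0^{b-j},
\]
whose equivariance is cleanest to see by realizing $s$ as the $(m_0,n_0)$-bidegree projection of $\mathrm{Sym}^{m_0+n_0}$ applied to the diagonal embedding $\Fpsq^2\hookrightarrow \Fpsq^2\oplus\Fpsq^2$. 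A Vandermonde identity then yields $\mu\circ s=\binom{m_0+n_0}{m_0}\cdot\mathrm{id}$, so under the hypothesis, dividing by this binomial coefficient produces a genuine section of $\mu$ and hence a splitting. The injectivity step is the only place that requires real combinatorial work; every other ingredient is a short explicit check.
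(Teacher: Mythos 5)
Your proof is correct and complete. Note, however, that the paper does not actually prove this lemma: it is quoted as a folklore result with references to Kouwenhoven and to \cite{doi:10.1142/S0219498826502701}, so there is no in-paper argument to compare against. What you have written is the standard self-contained proof that those references give in some form: the Koszul-type inclusion $P\otimes Q\otimes (x_0\wedge y_0)\mapsto x_0P\otimes y_0Q-y_0P\otimes x_0Q$ together with the multiplication map, exactness in the middle reduced to injectivity by the dimension count $(m_0+1)(n_0+1)-(m_0+n_0+1)=m_0n_0$, and the splitting via the comultiplication (polarization) section $s$ with $\mu\circ s=\binom{m_0+n_0}{m_0}\cdot\mathrm{id}$ by Vandermonde. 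All the individual steps check out: the equivariance of the inclusion follows from the $2\times 2$ determinant identity in the bilinear expansion you describe; the coefficient of $x_0^{m_0-a}y_0^a\otimes x_0^{n_0-b}y_0^b$ in the image is indeed $c_{a,b-1}-c_{a-1,b}$ (with $c$ extended by zero), the equations with $a\in\{0,m_0\}$ or $b\in\{0,n_0\}$ kill the boundary of the rectangle, and constancy along anti-diagonals then kills everything. The only cosmetic point worth making explicit is the degenerate case $m_0=0$ or $n_0=0$, where the left-hand term is the zero representation by the paper's convention and the sequence collapses to the isomorphism $(0,0)\otimes(n_0,0)\cong(n_0,0)$; your maps handle this automatically. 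Your argument also makes transparent exactly where the hypothesis $p\nmid\binom{m_0+n_0}{n_0}$ enters (only in inverting the scalar $\mu\circ s$), which the paper's citation hides.
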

\noindent By tensoring the above  exact sequence with $(0,m_1),$ we obtain the exact sequence:
\[0 \to (m_0-1,m_1)\otimes(n_0 - 1,0) \otimes \det \to (m_0,m_1) \otimes (n_0,0) \to (m_0 +n_0,m_1)\to 0.\]
If $p \nmid \binom{n_0+m_0}{m_0}$, this sequence splits and we have 
\begin{equation}\label{identity1}
      (m_0,m_1) \otimes(n_0,0) \cong ((m_0-1,m_1)\otimes(n_0-1,0) \otimes \det) \oplus  (m_0+n_0,m_1).
\end{equation}
Similarly, we have:
\begin{lemma}\label{twistlemma1}
    Let $m_1,n_1 \geq 0$. Then we have an exact sequence of $\mathrm{GL}_2(\Fpsq)$-representations:
    \[0 \to (0,m_1-1) \otimes (0,n_1-1) \otimes \mathrm{det}^{p} \xrightarrow{} (0,m_1)\otimes(0,n_1) \xrightarrow{} (0,m_1 + n_1) \to 0. \]  
    Moreover, this sequence splits 
    if $p \nmid \binom{n_1+m_1}{n_1}$. 
\end{lemma}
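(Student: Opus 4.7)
The plan is to deduce this as a formal consequence of Lemma~\ref{shaliniabhik} via the Frobenius twist on $\mathrm{GL}_2(\Fpsq)$. Let $\mathrm{Fr}$ denote the group endomorphism of $\mathrm{GL}_2(\Fpsq)$ that raises each matrix entry to the $p$-th power. Precomposition with $\mathrm{Fr}$ is an exact endofunctor on $\mathrm{GL}_2(\Fpsq)$-representations which commutes with tensor products, carries $\det$ to $\det^p$, and---since the action defining $(0,m)$ is by construction the $(m,0)$-action composed with entrywise $p$-th powers---carries $(m,0)$ to $(0,m)$. Applying this functor to the exact sequence of Lemma~\ref{shaliniabhik} with $(m_0,n_0)$ replaced by $(m_1,n_1)$ should yield the desired short exact sequence, and should transport the splitting under the hypothesis $p \nmid \binom{n_1+m_1}{n_1}$.

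Concretely, I would first make the identification $(0,m) \cong (m,0) \circ \mathrm{Fr}$ precise via the variable swap $x_0 \leftrightarrow x_1$, $y_0 \leftrightarrow y_1$, a vector-space isomorphism intertwining the twisted action on $(m,0)$ with the native action on $(0,m)$. Under this identification the injection becomes
\[P(x_1,y_1) \otimes Q(x_1,y_1) \;\longmapsto\; x_1 P \otimes y_1 Q - y_1 P \otimes x_1 Q,\]
and the surjection is ordinary multiplication $P \otimes Q \mapsto PQ$. A direct check using $x_1 \mapsto a^p x_1 + c^p y_1$ and $y_1 \mapsto b^p x_1 + d^p y_1$ shows that the first map transforms by $a^p d^p - b^p c^p = \det(\alpha)^p$, which accounts for the appearance of $\det^p$ rather than $\det$ in the statement. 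The composition with the surjection is visibly zero and exactness is then inherited from the untwisted case.

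I do not expect a genuine obstacle here: both exactness and the splitting are inherited from Lemma~\ref{shaliniabhik} because the Frobenius twist leaves underlying vector spaces and linear maps untouched, so any $\mathrm{GL}_2(\Fpsq)$-equivariant section for the untwisted sequence is automatically a section for the twisted one. If a direct argument is preferred, one would simply repeat the proof of Lemma~\ref{shaliniabhik} verbatim with $a,b,c,d$ replaced by $a^p,b^p,c^p,d^p$ and with $(x_0,y_0)$ replaced by $(x_1,y_1)$; the section realizing the splitting is the usual polarization map, well-defined precisely when $p \nmid \binom{n_1+m_1}{n_1}$, and equivariance under the Frobenius-twisted action is the same calculation as in the untwisted case.
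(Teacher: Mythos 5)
Your proposal is correct and is essentially the paper's own proof: the paper likewise obtains this lemma by applying the Frobenius twist to the exact sequence of Lemma~\ref{shaliniabhik}, invoking the formal properties of $\mathrm{Fr}$ (exactness, compatibility with tensor products, $\det \circ \mathrm{Fr} = \det^p$, and $(m,0)\circ\mathrm{Fr} \cong (0,m)$) recorded at the start of Section~\ref{generalf}. Your additional explicit description of the injection and the $\det^p$ computation is a correct elaboration of the same argument.
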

    \begin{proof}
        Take Frobenius twist of the exact sequence in Lemma \ref{shaliniabhik}. For some formal properties of Frobenius see the beginning of Section \ref{generalf}.
    \end{proof}
\noindent By tensoring the above exact sequence 
with $(m_0,0),$ we get the exact sequence:
\[0 \to (m_0,m_1-1)\otimes(0,n_1-1) \otimes \mathrm{det}^p \to (m_0,m_1)\otimes (0,n_1) \to (m_0,m_1 + n_1) \to 0.  \]
If $p \nmid \binom{m_1 + n_1}{n_1}$, this sequence splits and we have 
    \begin{equation}\label{identity2}
    (m_0,m_1) \otimes(0,n_1) \cong ((m_0,m_1-1)\otimes(0,n_1-1) \otimes \mathrm{det}^p) \oplus  (m_0,m_1+n_1).   
  \end{equation}



Combining \eqref{identity1}, \eqref{identity2}, we obtain
the following theorem:
\begin{theorem}
    Let $m_0,m_1,n_0,n_1 \geq 0$ be integers. If $p \nmid \binom{m_0 + n_0}{n_0}\binom{m_1 + n_1}{n_1}$, then
    \begin{eqnarray*}
    (m_0,m_1) \otimes (n_0,n_1) &\cong &((m_0-1,m_1-1)\otimes(n_0-1,n_1-1)\otimes \mathrm{det}^{p+1})\\ &&\quad \oplus~ ((m_0+n_0,m_1-1)\otimes(0,n_1-1) \otimes \mathrm{det}^p) \\
    &&\quad \oplus ~((m_0-1,m_1+n_1)\otimes(n_0-1,0)\otimes \mathrm{det} \\
    &&\quad \oplus~ (m_0 + n_0,m_1 + n_1).
    \end{eqnarray*}
\end{theorem}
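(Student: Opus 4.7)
The strategy is to decompose the right-hand factor via $(n_0,n_1) \cong (n_0,0)\otimes(0,n_1)$ and then apply the two split identities \eqref{identity1} and \eqref{identity2} in succession. Explicitly, one writes
\[(m_0,m_1)\otimes(n_0,n_1) \cong (m_0,m_1)\otimes(n_0,0)\otimes(0,n_1).\]
First I would apply \eqref{identity1} to the subexpression $(m_0,m_1)\otimes(n_0,0)$. The splitting is legitimate because the hypothesis $p \nmid \binom{m_0+n_0}{n_0}$ is part of our standing assumption, and yields
\[\bigl[(m_0-1,m_1)\otimes(n_0-1,0)\otimes\det\bigr] \,\oplus\, (m_0+n_0,m_1).\]
Tensoring with $(0,n_1)$ distributes over the direct sum and produces two pieces.

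Next I would apply \eqref{identity2} separately to the tensor products $(m_0-1,m_1)\otimes(0,n_1)$ and $(m_0+n_0,m_1)\otimes(0,n_1)$ sitting inside the two pieces. Both invocations are legal thanks to $p \nmid \binom{m_1+n_1}{n_1}$, and each produces two summands, for a total of four. Finally I would regroup, using $(a,0)\otimes(0,b) \cong (a,b)$ to recombine bihomogeneous factors and adding exponents of $\det$ (noting in particular that $\det\otimes\det^p \cong \det^{p+1}$), which delivers the four summands stated in the theorem.

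The argument is essentially bookkeeping once the two underlying split exact sequences are granted. The one genuine point to verify is that the single hypothesis $p \nmid \binom{m_0+n_0}{n_0}\binom{m_1+n_1}{n_1}$ indeed supplies the binomial nonvanishing needed for all three splittings invoked (once from \eqref{identity1}, twice from \eqref{identity2}), which it does because the second factor of \eqref{identity2} is insensitive to the first coordinate. Boundary cases where some $m_i$ or $n_i$ is $0$ are absorbed by the convention that any $(a,b)$ with a negative entry is the zero representation, so the corresponding summands simply drop out and do not obstruct the decomposition. There is no substantive technical obstacle; the main care lies in matching up tensor factors and tracking the determinantal twists.
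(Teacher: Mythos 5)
Your proposal is correct and follows essentially the same route as the paper's proof: decompose $(n_0,n_1)$ as $(n_0,0)\otimes(0,n_1)$, apply \eqref{identity1} once and \eqref{identity2} twice, and regroup the tensor factors and determinant twists. The points you flag for verification (the single binomial hypothesis covering all three splittings, and the negative-entry convention handling boundary cases) are exactly the right ones and are handled the same way in the paper.
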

 \begin{proof} We have
   \begin{eqnarray*}
    (m_0,m_1) \otimes (n_0,n_1)&\cong&    (m_0,m_1) \otimes (n_0,0)\otimes (0,n_1) \\
    &\cong &[(m_0-1,m_1)\otimes(n_0-1,0) \otimes \mathrm{det} \oplus (m_0+n_0,m_1)]   \otimes (0,n_1) \> \text{ by } \eqref{identity1} \\
     &\cong & (m_0-1,m_1)\otimes (0,n_1)\otimes(n_0-1,0) \otimes \mathrm{det} \oplus (m_0+n_0,m_1)\otimes (0,n_1) \\
     &\cong& ((m_0-1,m_1-1)\otimes(0,n_1-1) \otimes \mathrm{det}^p) \otimes (n_0-1,0)\otimes \mathrm{det} \\
     &&\quad \oplus ~  (m_0-1,m_1+n_1)\otimes (n_0-1,0)\otimes \mathrm{det} \\
   &&\quad \oplus~ ((m_0+n_0,m_1-1)\otimes(0,n_1-1) \otimes \mathrm{det}^p) \oplus  (m_0+n_0,m_1+n_1)  \> \text{ by } \eqref{identity2}\\
    &\cong& (m_0-1,m_1-1)\otimes(n_0-1,n_1-1) \otimes \mathrm{det}^{p+1} \\
     && \quad \oplus ~ (m_0-1,m_1+n_1)\otimes (n_0-1,0)\otimes \mathrm{det}\\
   && \quad \oplus~ (m_0+n_0,m_1-1)\otimes(0,n_1-1) \otimes \mathrm{det}^p \oplus  (m_0+n_0,m_1+n_1). \quad \qedhere
   \end{eqnarray*}    
   \end{proof}

\noindent As a special case, we obtain the following result which we use later.

\begin{cor} \label{cgtheorem}
      Let $m_0,m_1\geq 0$ be integers. If $p \nmid (m_0+1)(m_1+1)$, then
    \begin{eqnarray*}
    (m_0,m_1) \otimes (1,1) &\cong &(m_0-1,m_1-1)\otimes\mathrm{det}^{p+1}\oplus (m_0+1,m_1-1)\otimes\mathrm{det}^p \\ 
    &&\quad \oplus~ (m_0-1,m_1+1)\otimes \mathrm{det} \oplus (m_0 + 1,m_1 +1).
    \end{eqnarray*} 
\end{cor}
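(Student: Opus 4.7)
The plan is to deduce the corollary by direct specialization of the preceding theorem to the case $n_0 = n_1 = 1$. First, I would check the hypothesis: substituting $n_0 = n_1 = 1$ into the product $\binom{m_0+n_0}{n_0}\binom{m_1+n_1}{n_1}$ gives $(m_0+1)(m_1+1)$, so the non-divisibility assumption of the theorem reduces exactly to the hypothesis $p \nmid (m_0+1)(m_1+1)$ of the corollary, and the theorem applies.

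Next, I would substitute $n_0 = n_1 = 1$ into each of the four direct summands on the right-hand side of the theorem. This causes the factors $(n_0-1, n_1-1)$, $(0, n_1-1)$, and $(n_0-1, 0)$ to collapse to $(0,0)$, while the last summand $(m_0+n_0, m_1+n_1)$ becomes $(m_0+1, m_1+1)$. Since $(0,0)$ is the space of bihomogeneous polynomials of bidegree $(0,0)$, it is just $\mathbb{F}_{p^2}$ equipped with the trivial $\mathrm{GL}_2(\Fpsq)$-action, so it is the trivial one-dimensional representation and may be suppressed in any tensor product. Carrying out this simplification in all four summands yields precisely the claimed decomposition
\[(m_0-1,m_1-1) \otimes \det\nolimits^{p+1} \oplus (m_0+1,m_1-1) \otimes \det\nolimits^{p} \oplus (m_0-1,m_1+1) \otimes \det \oplus (m_0+1,m_1+1).\]

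There is no substantive obstacle here; the corollary is a routine specialization, and the only content beyond substitution is the identification of $(0,0)$ with the trivial representation, which is immediate from the definition of the bidegree grading on polynomials.
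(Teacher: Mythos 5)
Your proof is correct and matches the paper's intent exactly: the paper derives this corollary simply by specializing the preceding theorem to $n_0=n_1=1$, noting that $\binom{m_0+1}{1}\binom{m_1+1}{1}=(m_0+1)(m_1+1)$ and that the factors $(0,0)$ are the trivial representation. Nothing further is needed.
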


  In a similar manner, we can also deduce the following generalization of \cite[(5.5) (a)]{glover}.
\begin{cor}\label{smallweight}
Let $0\le m_0\le n_0 \le p-1,$ $0\le m_1\le n_1 \le p-1$ be such that $m_0+n_0\le p-1$ and $m_1+n_1\le p-1$.
Then
\begin{eqnarray*}
(m_0,m_1) \otimes(n_0,n_1)    &\cong& \bigoplus_{i=0}^{m_0} \bigoplus_{j=0}^{m_1}~ (m_0+n_0-2i,m_1+n_1-2j) \otimes \mathrm{det}^{i+jp}.
\end{eqnarray*}
\end{cor}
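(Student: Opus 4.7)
The plan is to prove Corollary \ref{smallweight} by a double iteration of the two splittings \eqref{identity1} and \eqref{identity2}, first peeling off the $(n_0,0)$-factor in $m_0$ steps, and then peeling off the $(0,n_1)$-factor in $m_1$ steps. The key point is that the small-weight hypotheses $m_0+n_0\le p-1$ and $m_1+n_1\le p-1$ guarantee that every binomial coefficient encountered during the iteration has the form $\binom{a+b}{a}$ with $a+b\le p-1$, and is therefore a unit in $\mathbb F_p$, so each application of \eqref{identity1} or \eqref{identity2} actually splits.

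First I would factor $(n_0,n_1)\cong (n_0,0)\otimes(0,n_1)$ and prove by induction on $m_0$ the intermediate identity
\[
(m_0,m_1)\otimes(n_0,0)\ \cong\ \bigoplus_{i=0}^{m_0}(m_0+n_0-2i,\,m_1)\otimes\mathrm{det}^{i},
\]
valid whenever $m_0+n_0\le p-1$. The base case $m_0=0$ is trivial. For the inductive step I apply \eqref{identity1} (whose hypothesis $p\nmid\binom{m_0+n_0}{m_0}$ holds by the small-weight assumption), obtaining
\[
(m_0,m_1)\otimes(n_0,0)\ \cong\ \bigl((m_0-1,m_1)\otimes(n_0-1,0)\otimes\mathrm{det}\bigr)\ \oplus\ (m_0+n_0,m_1),
\]
and then feed the induction hypothesis (applicable because $(m_0-1)+(n_0-1)\le p-3$) into the first summand, followed by a shift of indices $i\mapsto i+1$ to match the desired decomposition.

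Next I would prove the analogous identity in the second coordinate: for any $a\ge 0$ and whenever $m_1+n_1\le p-1$,
\[
(a,m_1)\otimes(0,n_1)\ \cong\ \bigoplus_{j=0}^{m_1}(a,\,m_1+n_1-2j)\otimes\mathrm{det}^{jp},
\]
by the same induction on $m_1$, now using \eqref{identity2}. Finally, tensoring the first identity with $(0,n_1)$ and applying the second identity to each summand (noting that $\mathrm{det}^{i}\otimes\mathrm{det}^{jp}=\mathrm{det}^{i+jp}$) yields
\[
(m_0,m_1)\otimes(n_0,n_1)\ \cong\ \bigoplus_{i=0}^{m_0}\bigoplus_{j=0}^{m_1}(m_0+n_0-2i,\,m_1+n_1-2j)\otimes\mathrm{det}^{i+jp},
\]
which is the claimed decomposition.

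I do not expect any serious obstacle: the argument is a clean two-variable version of Glover's recursion, and the only thing that could fail is a binomial vanishing modulo $p$, but the hypotheses $m_0+n_0\le p-1$ and $m_1+n_1\le p-1$ are precisely calibrated so that at every stage of the double induction the relevant binomial remains a unit in $\mathbb F_p$. The mildest care is needed in bookkeeping the determinant twists, which acquire an extra factor of $\mathrm{det}$ at each step of the first induction and an extra factor of $\mathrm{det}^{p}$ at each step of the second, giving the combined twist $\mathrm{det}^{i+jp}$.
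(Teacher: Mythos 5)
Your proposal is correct and follows essentially the same route as the paper: peel off $(n_0,0)$ by iterating \eqref{identity1} to get $\bigoplus_{i=0}^{m_0}(m_0+n_0-2i,m_1)\otimes\mathrm{det}^{i}$, do the analogous iteration of \eqref{identity2} in the second coordinate, and tensor the two decompositions together. Your extra care with the induction bookkeeping and with checking that each binomial $\binom{a+b}{a}$ with $a+b\le p-1$ is a unit mod $p$ is exactly the justification the paper leaves implicit.
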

\begin{proof}
Applying Lemma \ref{shaliniabhik} $m_0$ times, we obtain 
    \begin{eqnarray*}
        (m_0,m_1) \otimes(n_0,0) &\cong&((m_0-1,m_1)\otimes(n_0-1,0) \otimes \mathrm{det}) \oplus  (m_0+n_0,m_1)\\
&\cong&\bigoplus_{i=0}^{m_0}~(m_0+n_0-2i,m_1)\otimes \mathrm{det}^{i}. 
    \end{eqnarray*}
Similarly, by Lemma \ref{twistlemma1}, we have 
    \begin{eqnarray*}
        (m_0,m_1) \otimes(0,n_1) \cong \bigoplus_{j=0}^{m_1}~(m_0,m_1+n_1-2j)\otimes \mathrm{det}^{jp}. 
    \end{eqnarray*}
Using these two facts, we have    
\begin{eqnarray*}
        (m_0,m_1) \otimes(n_0,n_1) &\cong& (m_0,m_1) \otimes(n_0,0) \otimes (0,n_1) \\&\cong&\bigoplus_{i=0}^{m_0}~(m_0+n_0-2i,m_1)\otimes \mathrm{det}^{i}\otimes (0,n_1) \\
&\cong&\bigoplus_{i=0}^{m_0} \bigoplus_{j=0}^{m_1}~ (m_0+n_0-2i,m_1+n_1-2j)\otimes \mathrm{det}^{jp}\otimes \mathrm{det}^{i}\\
&\cong&\bigoplus_{i=0}^{m_0} \bigoplus_{j=0}^{m_1} ~(m_0+n_0-2i,m_1+n_1-2j) \otimes \mathrm{det}^{i+jp}. \qquad \qquad \qquad \qquad \quad \qedhere
    \end{eqnarray*} 
\end{proof}

The next lemma is an analogue of Glover \cite[(5.5) (b)]{glover}.

\begin{lemma}
\label{lem5}
     Let $0\le m_0\le n_0\le p-1$ be such that $p-2\le m_0+n_0\le 2p-2.$ We have 
     \begin{eqnarray*}
        (m_0,0)\otimes (n_0,0)&\cong&(p-m_0-2,0)\otimes (p-n_0-2,0)\otimes \mathrm{det}^{m_0+n_0+2-p}\\
        &&\quad \oplus~ (m_0+n_0+1-p,0)\otimes (p-1,0).
     \end{eqnarray*}
\end{lemma}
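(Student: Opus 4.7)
The strategy is to iterate Lemma \ref{shaliniabhik} carefully while tracking which of the resulting short exact sequences split. Writing $s := m_0 + n_0$, Lucas's theorem shows that in our range $p \leq s \leq 2p - 2$ (with $0 \leq m_0 \leq n_0 \leq p-1$), the controlling binomial coefficient $\binom{s - 2k}{n_0 - k}$ is a unit modulo $p$ precisely when $s - 2k < p$: indeed, when $s - 2k \geq p$, writing $s - 2k = p + r$ with $0 \leq r \leq p-2$, Lucas gives $\binom{p+r}{n_0-k} \equiv \binom{r}{n_0-k} \pmod p$, which vanishes since $n_0 - k > r$ follows from $m_0 \leq p-1$. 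Thus splitting in Lemma \ref{shaliniabhik} occurs exactly for $k \geq t := \lceil (s - p + 1)/2 \rceil$ and fails for $k < t$. (If $s = p-2$ then Corollary \ref{smallweight} already applies to the whole tensor product.)

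\textbf{Step-by-step.} Iterating Lemma \ref{shaliniabhik} produces the descending filtration $A_0 \supset A_1 \supset \cdots \supset A_{m_0}$ with $A_k = (m_0 - k, 0) \otimes (n_0 - k, 0) \otimes \det^k$ and subquotients $Q_k = A_k/A_{k+1} = (s - 2k, 0) \otimes \det^k$. Splitting for $k \geq t$ peels the summands $Q_t, Q_{t+1}, \dots, Q_{m_0 - 1}$ off; since $A_t$ has total first-degree $s - 2t < p$, Corollary \ref{smallweight} decomposes it completely as
\[A_t \;\cong\; \bigoplus_{k=t}^{m_0} (s - 2k, 0) \otimes \det^k.\]
It remains to handle the non-split head of the filtration (the extension of $Q_0, Q_1, \dots, Q_{t-1}$ sitting on top of $A_t$) and to match the whole tensor product with the two summands claimed. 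I would do this by exhibiting two explicit $\mathrm{GL}_2(\mathbb{F}_{p^2})$-equivariant embeddings into $(m_0, 0) \otimes (n_0, 0)$: one realizing $(m_0 + n_0 + 1 - p, 0) \otimes (p - 1, 0)$ (built using iterated Clebsch-Gordan multiplication maps from Lemma \ref{shaliniabhik}), and one realizing $(p - m_0 - 2, 0) \otimes (p - n_0 - 2, 0) \otimes \det^{m_0 + n_0 + 2 - p}$ (built via a polarization-type contraction operator). The dimension identity $(m_0 + 1)(n_0 + 1) = (p - m_0 - 1)(p - n_0 - 1) + (m_0 + n_0 + 2 - p) p$, combined with a Jordan-H\"older comparison of composition factors on both sides, then forces these two subrepresentations to be complementary direct summands.

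\textbf{Main obstacle.} The central difficulty is constructing these two equivariant embeddings intrinsically in the $\mathrm{GL}_2(\mathbb{F}_{p^2})$ setting. Glover's original argument for $\mathrm{GL}_2(\mathbb{F}_p)$ uses multiplication by the Dickson polynomial $\theta = x^p y - x y^p$, which is a semi-invariant there; but a direct computation shows $\theta$ is not a $\mathrm{GL}_2(\mathbb{F}_{p^2})$-semi-invariant, so the splitting maps must be built from intrinsically $\mathrm{GL}_2(\mathbb{F}_{p^2})$-equivariant operators. Natural candidates include the Frobenius inclusion $(0, 1) \hookrightarrow (p, 0)$ sending $x_1 \mapsto x^p$, $y_1 \mapsto y^p$, combined with the iterated Clebsch-Gordan inclusions of Lemma \ref{shaliniabhik}. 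Assembling these into maps with the right source and target, and then verifying injectivity, equivariance, and disjointness of images, is the most technical content of the proof.
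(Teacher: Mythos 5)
Your setup is sound: the Lucas computation correctly locates where the iterated sequences of Lemma \ref{shaliniabhik} split (exactly when $s-2k<p$), the filtration $A_0\supset A_1\supset\cdots$ with subquotients $Q_k=(s-2k,0)\otimes\det^k$ is the right picture, and the dimension identity $(m_0+1)(n_0+1)=(p-m_0-1)(p-n_0-1)+(m_0+n_0+2-p)p$ checks out. But the proof is not actually carried out: the decisive step --- exhibiting $(p-m_0-2,0)\otimes(p-n_0-2,0)\otimes\det^{m_0+n_0+2-p}$ and $(m_0+n_0+1-p,0)\otimes(p-1,0)$ as complementary summands of $(m_0,0)\otimes(n_0,0)$ --- is left as a plan, and you yourself flag the construction of the two equivariant embeddings as an unresolved obstacle. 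You are right that the obstacle is real: the polynomial $x^py-xy^p$ is not a $\mathrm{GL}_2(\mathbb{F}_{p^2})$-semi-invariant, so Glover's explicit $\theta$-multiplication splitting does not transport to this setting, and no substitute map is produced. Moreover, even granting the two embeddings, a Jordan--H\"older comparison alone does not force a direct sum decomposition; one would still need to verify that the sum of the images is direct. As it stands, the argument establishes only that $(m_0,0)\otimes(n_0,0)$ has a semisimple-looking submodule $A_t$ with a non-split tower of $Q_0,\dots,Q_{t-1}$ on top, which is far short of the claimed isomorphism.

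The paper's proof avoids this obstacle entirely. It inducts on $m_0$ (following Glover's argument for (5.5)(b)): the base cases $m_0=0,1$ are tautologies or a single application of Lemma \ref{shaliniabhik}, and the inductive step computes $(1,0)\otimes(m_0',0)\otimes(n_0,0)$ in two ways using associativity --- once by first decomposing $(1,0)\otimes(m_0',0)$ and applying the inductive hypothesis to $(m_0'-1,0)\otimes(n_0,0)$, and once by applying the inductive hypothesis to $(m_0',0)\otimes(n_0,0)$ and then tensoring with $(1,0)$ --- and then cancels common indecomposable factors via the Krull--Schmidt theorem. No explicit equivariant map is ever needed. If you want to salvage your approach, you would have to construct the two embeddings from genuinely $\mathrm{GL}_2(\mathbb{F}_{p^2})$-equivariant operators (your suggestion of the Frobenius inclusion $(0,1)\hookrightarrow(p,0)$ is in the right spirit but lands in the wrong component for this lemma, which lives entirely in the variables $x_0,y_0$), and then separately prove directness of the sum; the inductive Krull--Schmidt route is substantially shorter.
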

\begin{proof}
  Following \cite{glover}, we prove the lemma by induction on $m_0.$ Let's assume $m_0=0$, then $n_0$ can be $p-2$ or $p-1.$ In both the cases, we have a tautology. Now assume that $m_0=1.$ Then $n_0$ can be one of $p-3, p-2, p-1.$ Again for $n_0=p-3, p-1$ we have a tautology. The case $n_0=p-2$ follows from Lemma \ref{shaliniabhik}. Now we assume $1\le m_0'< p-1$ and that the statement is true for all $m_0\le m_0'$  and all possible values of $n_0.$ We have to prove the lemma for $m_0'+1$ and for all $n_0$ such that $p-2\le m_0'+1+n_0\le 2p-2.$ When $m_0'+1+n_0=p-2$, the statement is a tautology. If $m_0'+1+n_0=p-1$, then by Lemma \ref{shaliniabhik} we have 
    \begin{eqnarray*}
       (m_0'+1,0)\otimes (n_0,0)&\cong& (m_0',0)\otimes(n_0-1,0) \otimes \mathrm{det} \oplus  (m_0'+1+n_0,0)\\ 
       &\cong& (p-2-n_0,0)\otimes (p-2-(m_0'+1),0) \otimes \mathrm{det}\\
       &&\quad \oplus ~(0,0)\otimes (p-1,0)
    \end{eqnarray*}
    as desired. Hence we can assume that $p\le m_0'+1+n_0\le2p-2,$ i.e., $p-2\le m_0'-1+n_0 \leq 2p-4$. We compute $(1,0)\otimes (m_0',0)\otimes (n_0,0)$ in two ways using the associativity of the tensor product. By 
    Lemma~\ref{shaliniabhik}, we have
    \begin{eqnarray*}
        (1,0)\otimes (m_0',0)\otimes(n_0,0)&\cong& ((m_0'-1,0)\otimes \mathrm{det}\oplus (m_0'+1,0))\otimes (n_0,0)\\
        &\cong& (p-m_0'-1,0)\otimes (p-n_0-2,0)\otimes \mathrm{det}^{m_0'+n_0+2-p} \\
        &&\quad \oplus~ (m_0'+n_0-p,0)\otimes(p-1,0)\otimes \mathrm{det}\oplus (m_0'+1,0)\otimes (n_0,0)
    \end{eqnarray*}
    using the inductive hypothesis for $(m_0'-1,0)\otimes (n_0,0).$ Again, since $p-1\le m_0'+n_0$, by the inductive hypothesis we have 
    \begin{eqnarray*}
       (1,0)\otimes (m_0',0)\otimes(n_0,0)&\cong& (1,0)\otimes ((p-m_0'-2,0)\otimes (p-n_0-2,0)\otimes \mathrm{det}^{m_0'+n_0+2-p}\\
   &&\quad \oplus \;(m_0'+n_0+1-p,0)\otimes(p-1,0))\\
    &\cong&(p-m_0'-3,0)\otimes (p-n_0-2,0)\otimes \mathrm{det}^{m_0'+n_0+3-p}\\
    &&\quad \oplus \; (p-m_0'-1,0)\otimes(p-n_0-2,0) \otimes \mathrm{det}^{m_0'+n_0+2-p}\\
    &&\quad \oplus \; (m_0'+n_0-p,0)\otimes (p-1,0)\otimes \mathrm{det}\\
    &&\quad \oplus \; (m_0'+n_0+2-p,0)\otimes (p-1,0),
    \end{eqnarray*}
    where in the second step we have used Lemma \ref{shaliniabhik}.
  Comparing the above two decompositions of $(1,0)\otimes (m_0',0)\otimes(n_0,0)$ and using the Krull-Schmidt theorem on uniqueness of indecomposable factors up to order, we obtain 
  \begin{eqnarray*}
     (m_0'+1,0)\otimes (n_0,0)&\cong& (p-m_0'-3,0)\otimes (p-n_0-2,0)\otimes \mathrm{det}^{m_0'+n_0+3-p}\\
    &&\quad \oplus~(m_0'+n_0+2-p,0)\otimes (p-1,0)  
  \end{eqnarray*}
  which is the statement of the lemma for $m_0=m_0'+1.$ This completes the inductive step.
\end{proof}

 The next lemma can be proved by taking the Frobenius twist of Lemma \ref{lem5}.

\begin{lemma}\label{lem6}
     Let $0\le m_1\le n_1\le p-1$ be such that $p-2\le m_1+n_1\le 2p-2.$ We have 
     \begin{eqnarray*}
        (0,m_1)\otimes (0,n_1)&\cong&(0,p-m_1-2)\otimes (0,p-n_1-2)\otimes \mathrm{det}^{p(m_1+n_1+2-p)}\\
        &&\quad \oplus \; (0,m_1+n_1+1-p)\otimes (0,p-1).
     \end{eqnarray*}
\end{lemma}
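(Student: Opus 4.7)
The plan is to deduce Lemma~\ref{lem6} from Lemma~\ref{lem5} by applying the Frobenius twist functor, exactly as the paper suggests. The Frobenius functor $\mathrm{Fr}$ sends a $\mathrm{GL}_2(\Fpsq)$-representation $V$ to the representation $V \circ \mathrm{Fr}$ obtained by precomposing the action with the entrywise Frobenius map $\alpha \mapsto \alpha^p$ on $\mathrm{GL}_2(\Fpsq)$. As will presumably be recorded at the start of Section~\ref{generalf}, this functor is additive and monoidal (it commutes with direct sums and tensor products), it carries $(m,0)$ to $(0,m)$ by the very definition of the action on bihomogeneous polynomials, and it sends $\det$ to $\det^p$.

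With these formal properties in hand, I would simply apply $\mathrm{Fr}$ to the isomorphism provided by Lemma~\ref{lem5}, written with the indices $m_0, n_0$ in the range $0 \le m_0 \le n_0 \le p-1$ and $p-2 \le m_0 + n_0 \le 2p-2$. Monoidality converts the tensor products on both sides term by term, additivity distributes $\mathrm{Fr}$ across the direct sum, and the rule $\mathrm{Fr}(\det^k) = \det^{pk}$ turns the twist $\det^{m_0+n_0+2-p}$ into $\det^{p(m_0+n_0+2-p)}$ while leaving the second summand's $(p-1,0)$ factor Frobenius-twisted to $(0,p-1)$. Renaming $m_0, n_0$ as $m_1, n_1$ in the resulting identity is then precisely the statement of the lemma.

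There is no serious obstacle: the only point that requires any care is verifying that the Frobenius functor really is monoidal and really sends the ``first slot'' representations $(m,0)$ to the ``second slot'' representations $(0,m)$, which is immediate from the explicit description of the action on polynomials given just before Lemma~\ref{shaliniabhik}. Since the hypothesis $0 \le m_1 \le n_1 \le p-1$ and $p-2 \le m_1+n_1 \le 2p-2$ matches the hypothesis of Lemma~\ref{lem5} after relabeling, no additional constraints need to be checked. The whole argument is thus a one-line application of functoriality.
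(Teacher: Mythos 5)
Your proposal is correct and is exactly the paper's argument: the paper proves Lemma~\ref{lem6} by taking the Frobenius twist of Lemma~\ref{lem5}, relying on the formal properties of $\mathrm{Fr}$ (compatibility with $\otimes$ and $\oplus$, $(m,0)\circ\mathrm{Fr}\cong(0,m)$, and $\det\circ\mathrm{Fr}=\det^p$) recorded at the start of Section~\ref{generalf}. Nothing further is needed.
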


Tensoring the two lemmas above, we obtain:
\begin{cor}\label{largeweights} Let $0\le m_0\le n_0\le p-1$ and  $0\le m_1\le n_1\le p-1$ be such that $p-2\le m_0+n_0\le 2p-2$ and $p-2\le m_1+n_1\le 2p-2.$ We have
     \begin{eqnarray*}
        (m_0,m_1)\otimes (n_0,n_1)&\cong&  (p-1,p-1)\otimes(m_0+n_0+1-p,m_1+n_1+1-p)\\
        &&\quad \oplus \;(p-m_0-2,p-m_1-2)\otimes (p-n_0-2,p-n_1-2)\\ 
        && \quad \;\;\;\;\otimes \; \mathrm{det}^{(m_0+n_0+2-p) + p(m_1 + n_1 + 2 - p)}\\
        &&\quad  \oplus \;  (p-m_0-2,0) \otimes \; (p-n_0-2,0) \\
        && \quad  \;\;\;\;\otimes \; (0,m_1+n_1+1-p) \otimes(0,p-1) \otimes \; \mathrm{det}^{m_0+n_0+2-p} \\
        &&\quad \oplus \; (m_0+n_0+1-p,0) \otimes(p-1,0) \\
        &&\quad \;\;\;\;\otimes \;(0,p-m_1-2) \otimes (0,p-n_1-2)   \otimes \mathrm{det}^{p(m_1+n_1+2-p)}.
     \end{eqnarray*}
\end{cor}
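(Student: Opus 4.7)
The plan is to apply Lemmas \ref{lem5} and \ref{lem6} term by term to a suitable factorization of the left-hand side. Using the isomorphism $(a,0)\otimes(0,b) \cong (a,b)$ noted just after the definition of $(m_0,m_1)$, together with commutativity and associativity of tensor product, I would first rewrite
$$(m_0,m_1)\otimes(n_0,n_1) \;\cong\; \bigl[(m_0,0)\otimes(n_0,0)\bigr]\;\otimes\;\bigl[(0,m_1)\otimes(0,n_1)\bigr].$$
The two hypotheses $p-2 \le m_0+n_0 \le 2p-2$ and $p-2 \le m_1+n_1 \le 2p-2$ are precisely what is needed so that Lemma \ref{lem5} applies to the first bracket and Lemma \ref{lem6} applies to the second.

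Next I would substitute those two decompositions in and distribute, obtaining a direct sum indexed by a choice of one summand from each of the two binomials. This produces four summands, and the remaining work is bookkeeping: collapse the $(a,0)\otimes(0,b)$ factors into $(a,b)$ whenever the indices match, and combine determinant characters by adding exponents (using that $\det$ is a one-dimensional character, so tensoring just adds exponents). Two of the four terms simplify cleanly: pairing the ``small'' summand of Lemma \ref{lem5} with the ``small'' summand of Lemma \ref{lem6} yields
$$(p-m_0-2,p-m_1-2)\otimes(p-n_0-2,p-n_1-2)\otimes \det^{(m_0+n_0+2-p)+p(m_1+n_1+2-p)},$$
and pairing the two ``large'' summands yields
$$(p-1,p-1)\otimes(m_0+n_0+1-p,m_1+n_1+1-p).$$
The two remaining ``mixed'' summands do not collapse into bi-degree form for all four indices and are simply recorded as the four-fold tensor products appearing in the statement, with determinant twists inherited from whichever of Lemmas \ref{lem5} or \ref{lem6} contributed a $\det$ power.

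There is essentially no conceptual obstacle here; the whole thing reduces to the two lemmas together with the splitting $(m_0,m_1)\cong(m_0,0)\otimes(0,m_1)$. The only mild care needed is in tracking the four cross terms produced by distributivity and verifying that the determinant exponents add correctly, since the two lemmas twist by $\det$ and $\det^p$ respectively. No induction or new representation-theoretic input is required beyond what has already been assembled.
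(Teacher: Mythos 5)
Your proposal is correct and is exactly the paper's argument: the paper derives this corollary by "tensoring the two lemmas above," i.e.\ applying Lemma~\ref{lem5} and Lemma~\ref{lem6} to $(m_0,0)\otimes(n_0,0)$ and $(0,m_1)\otimes(0,n_1)$ respectively and distributing, with the same bookkeeping of $(a,0)\otimes(0,b)\cong(a,b)$ and additive determinant exponents. All four resulting summands match the statement, so nothing further is needed.
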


The first term on the right in the previous corollary can be rewritten using the following general fact which treats the case of tensor product 
with the projective module $(p-1,p-1)$ (note that neither
$(p-1,0)$ nor $(0,p-1)$ are projective since $p^2$ must divide the dimension of a projective $\mathrm{GL}_2({\mathbb F}_{p^2})$-module). This fact generalizes Glover \cite[(5.3)]{glover}. 

\begin{theorem}
    \label{projective}
    Let $(m_0,m_1)$ be a representation of $\mathrm{GL}_2(\mathbb F_{p^2}).$ Then we have
    \begin{eqnarray*}(m_0,m_{1}) \otimes (p-1,p-1) \cong ((m_{1}+1)p-1,(m_{0}+1)p-1).\end{eqnarray*}
\end{theorem}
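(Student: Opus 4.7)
The essential mechanism is that tensoring with $(p-1,0)$ has precisely the room to absorb a Frobenius-twisted slot $(0,m)$ into a single polynomial representation of degree $(m+1)p-1$, and likewise with the slots swapped. The apparent swap of $m_0$ and $m_1$ in the target of the theorem arises because this absorption pairs a slot with its \emph{opposite} Frobenius twin. The first step is to use $(a,b)\cong(a,0)\otimes(0,b)$ together with commutativity of the tensor product to repackage the left hand side as
\[(m_0,m_1)\otimes(p-1,p-1) \;\cong\; \bigl[(0,m_1)\otimes(p-1,0)\bigr] \otimes \bigl[(m_0,0)\otimes(0,p-1)\bigr],\]
deliberately pairing each $(m_i,0)$-type factor with the opposite-Frobenius copy of the Steinberg factor.

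The key lemma is that for every $m\geq 0$ there is a $\mathrm{GL}_2(\Fpsq)$-equivariant isomorphism $(0,m)\otimes(p-1,0) \cong ((m+1)p-1,\,0)$, given explicitly by the map $R(x_1,y_1)\otimes Q(x_0,y_0) \mapsto R(x_0^p,y_0^p)\,Q(x_0,y_0)$. Equivariance reduces to the matching identities
\[\alpha\cdot x_1 = a^p x_1 + c^p y_1 \qquad\text{and}\qquad \alpha\cdot x_0^p = (ax_0+cy_0)^p = a^p x_0^p + c^p y_0^p,\]
which show that the substitution $x_1\mapsto x_0^p,\ y_1\mapsto y_0^p$ intertwines the two actions. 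Bijectivity is then a dimension count together with the fact that every monomial $x_0^i y_0^{(m+1)p-1-i}$ in the target has a unique base-$p$ decomposition $i=ap+b$ with $0\leq a\leq m$ and $0\leq b\leq p-1$, corresponding to the source basis element $x_1^a y_1^{m-a}\otimes x_0^b y_0^{p-1-b}$.

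Taking the Frobenius twist of the key lemma, and using that the Frobenius twist of $(a,b)$ is $(b,a)$, yields $(m,0)\otimes(0,p-1)\cong(0,(m+1)p-1)$. Plugging both isomorphisms into the bracketed decomposition of the first paragraph (with $m=m_1$ in the first bracket and $m=m_0$ in the second) and using $(a,0)\otimes(0,b)=(a,b)$ gives
\[(m_0,m_1)\otimes(p-1,p-1) \;\cong\; ((m_1+1)p-1,\,0) \otimes (0,\,(m_0+1)p-1) \;=\; ((m_1+1)p-1,\,(m_0+1)p-1),\]
which is exactly the claimed isomorphism. The main technical obstacle is the explicit equivariance and bijectivity check in the key lemma; neither step is deep, but one must keep careful track of how the Frobenius on the $\Fpsq$-coefficients interacts with the substitution $x_1\mapsto x_0^p$, since it is precisely this interaction that produces the twist $(0,m)\to(m,0)^{(1)}$ and thereby the swap on the right hand side.
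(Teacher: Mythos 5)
Your proof is correct and follows essentially the same route as the paper: the core mechanism in both is the equivariant substitution $x_1\mapsto x_0^p$, $y_1\mapsto y_0^p$ followed by multiplication, with surjectivity/bijectivity read off from the base-$p$ decomposition of exponents and a dimension count. The only difference is packaging — you factor the argument into a single-slot lemma $(0,m)\otimes(p-1,0)\cong((m+1)p-1,0)$ plus its Frobenius twist, whereas the paper builds one map $\beta$ handling both slots at once (using $a^{p^2}=a$ where you instead invoke $\mathrm{Fr}^2=\mathrm{id}$); the composite maps are the same.
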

\begin{proof} Again, the proof follows \cite{glover}. We define a map $\beta:(m_0,m_1)\rightarrow (m_1p,m_{0}p)$  by 
$$P_0(x_0,y_0)\cdot P_{1}(x_1,y_1)\mapsto P_{0}(x_{1}^{p},y_{1}^{p})\cdot P_{1}(x_{0}^{p},y_{0}^{p}).$$
Clearly, $\beta$ is an injective linear map. We check that it is $\mathrm{GL}_2(\mathbb F_{p^2})$-equivariant. 
For $\alpha = \left( \begin{smallmatrix} a & b \\ c & d \end{smallmatrix} \right) \in \mathrm{GL}_2(\mathbb F_{p^2})$, we have
\begin{eqnarray*}
 \beta(\alpha \cdot P_0( x_0,y_0)P_1( x_1,y_1) ) &=& \beta_{}(P_0(a^{} x_0 + c^{}y_0,b^{} x_0 + d^{}y_0)P_1(a^{p} x_1 + c^{p}y_1,b^{p} x_1 +d^{p}y_1))\\
 &=&P_0(a^{} x^{p}_{1} + c^{}y^{p}_{1},b^{} x^{p}_{1} + d^{}y^{p}_{1}) P_1(a^{p} x^{p}_{0} + c^{p}y^{p}_{0},b^{p} x^{p}_{0} + d^{p}y^{p}_{0})
 \\
&=&P_0(a^{p^2} x^{p}_{1} + c^{p^2}y^{p}_{1},b^{p^2} x^{p}_{1} + d^{p^2}y^{p}_{1}) P_1(a^{p} x^{p}_{0} + c^{p}y^{p}_{0},b^{p} x^{p}_{0} + d^{p}y^{p}_{0})
 \\
 &=&P_0((a^{p} x^{}_{1} + c^{p}y^{}_{1})^p,(b^{p} x^{}_{1} + d^{p}y^{}_{1})^p) P_1((a^{} x^{}_{0} + c^{}y^{}_{0})^p,(b^{} x^{}_{0} + d^{}y^{}_{0})^p)
 \\
 &=& \alpha \cdot P_{0}(x_{1}^{p},y_{1}^{p}) P_{1}(x_{0}^{p},y_{0}^{p})\\
 &=& \alpha\cdot\beta_{}(P_0(x_0,y_0)P_1(x_1,y_1)).
\end{eqnarray*}
Consider the composition of maps:
\begin{align*}
   (m_0,m_1) \otimes (p-1,p-1) \xrightarrow{\beta_{}\otimes\mathrm{Id}} (m_1p,m_{0}p)\otimes (p-1,p-1) \xrightarrow{\varphi} ((m_{1}+1)p-1,(m_{0}+1)p-1),
\end{align*} 
where the second map $\varphi$ is given by $P\otimes Q\mapsto PQ.$ Since both the maps are $\mathrm{GL}_2(\mathbb F_{p^2})$-equivariant, the composition is also $\mathrm{GL}_2(\mathbb F_{p^2})$-equivariant. The image of any monomial $x_{0}^{l_0}y_{0}^{m_0-l_0}x_{1}^{l_1}y_{1}^{m_1-l_1}\otimes x_{0}^{s_0}y_{0}^{p-1-s_0}x_{1}^{s_1}y_{1}^{p-1-s_1}$ under the composition of the two maps is given by $$x_{0}^{l_{1}p+s_{0}}y_{0}^{(m_{1}-l_1)p+p-1-s_{0}}  x_{1}^{l_{0}p+s_{1}}y_{1}^{(m_{0}-l_0)p+p-1-s_{1}}  .$$
As $l_0, l_1$ vary from $0$ to $m_0, m_1$ respectively, and as $s_{0}, s_1$ vary from from $0$ to $p-1,$ we get that $l_{0}p+s_{1}$ varies from $0$ to $(m_{0}+1)p-1$ and similarly $l_{1}p+s_{0}$ varies from $0$ to $(m_1+1)p-1.$ Thus the composition of the two maps is surjective. Now a comparison of the dimension of the two spaces shows that composition is an isomorphism.
\end{proof}

So far we have considered the two extreme cases $m_0+n_0,m_1+n_1 \leq p-1$ and $ p - 2\leq m_0 + n_0,m_1 + n_1 \leq 2p - 2$. The other two `cross' cases follow similarly by tensoring \eqref{identity1} with Lemma~\ref{lem6}
and \eqref{identity2} with Lemma~\ref{lem5}. We obtain:

\begin{cor}
    Let $0 \leq m_0 \leq n_0 \leq p-1$ and $0 \leq m_1 \leq n_1 \leq p-1$ be such that $m_0 + n_0 \leq p-1$ and $p-2 \leq m_1 + n_1 \leq 2p -2$. We have
 \begin{eqnarray*}
        (m_0,m_1)\otimes (n_0,n_1)&\cong&  (m_0 - 1, p - m_1 - 2) \otimes (n_0-1,p-n_1 - 2) \otimes \mathrm{det}^{p(m_1 + n_1 + 2 - p) + 1} \\
        &&\quad \oplus \; (m_0 + n_0, p - m_1 - 2) \otimes (0,p - n_1 - 2) \otimes \mathrm{det}^{p(m_1 + n_1 + 2 - p)}\\
        && \quad \oplus \; (m_0 - 1,m_1 + n_1 + 1 - p) \otimes (n_0 - 1,p-1) \otimes \mathrm{det} \\
        && \quad \oplus \; (m_0 + n_0, m_1 + n_1 + 1 -p) \otimes (0,p-1).
        \end{eqnarray*}
\end{cor}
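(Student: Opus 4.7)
The plan is to follow the hint just before the corollary and combine the splitting \eqref{identity1} with Lemma \ref{lem6}. Start from the elementary identification
$$(m_0,m_1)\otimes(n_0,n_1)\;\cong\;(m_0,m_1)\otimes(n_0,0)\otimes(0,n_1),$$
which comes from the isomorphism $(a,0)\otimes(0,b)\cong(a,b)$ mentioned at the start of Section 2.1. Since $m_0+n_0\leq p-1$, Lucas' theorem gives $p\nmid\binom{m_0+n_0}{m_0}$, so the exact sequence preceding \eqref{identity1} splits and \eqref{identity1} is available. Applying it to $(m_0,m_1)\otimes(n_0,0)$ produces a two-term decomposition with summands $(m_0-1,m_1)\otimes(n_0-1,0)\otimes\mathrm{det}$ and $(m_0+n_0,m_1)$. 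Tensoring both with $(0,n_1)$ splits the problem into two parallel computations.

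Next, in each of the two summands I would rearrange using associativity and the identification above, writing every occurring $(a,m_1)\otimes(0,n_1)$ in the factored form $(a,0)\otimes\bigl[(0,m_1)\otimes(0,n_1)\bigr]$. The hypothesis $p-2\leq m_1+n_1\leq 2p-2$ is exactly what Lemma \ref{lem6} needs, so applying it to the inner bracket decomposes it as
$$(0,p-m_1-2)\otimes(0,p-n_1-2)\otimes\mathrm{det}^{p(m_1+n_1+2-p)}\;\oplus\;(0,m_1+n_1+1-p)\otimes(0,p-1).$$
Each of the two outer summands thus splits further into two, producing four summands in total, matching the claim.

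The final step is bookkeeping: reassemble tensor factors via $(a,0)\otimes(0,b)\cong(a,b)$, so that e.g.\ $(m_0-1,0)\otimes(0,p-m_1-2)$ becomes $(m_0-1,p-m_1-2)$ and likewise for the $n$-side; and collect determinant characters. The only nontrivial exponent arises in the first summand, where the $\mathrm{det}$ from \eqref{identity1} combines with the $\mathrm{det}^{p(m_1+n_1+2-p)}$ from Lemma \ref{lem6} to yield $\mathrm{det}^{p(m_1+n_1+2-p)+1}$. The summand coming from $(m_0+n_0,m_1)\otimes(0,n_1)$ carries no extra $\mathrm{det}$, so it yields the two summands with $(0,p-n_1-2)$ and $(0,p-1)$ as written.

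The main obstacle is essentially none of substance; the only thing to watch is that the two hypotheses in the corollary, $m_0+n_0\leq p-1$ and $p-2\leq m_1+n_1\leq 2p-2$, are precisely what make \eqref{identity1} and Lemma \ref{lem6} applicable, and that determinant exponents be tracked carefully through the rearrangements. The symmetric `cross' case with the roles of the two coordinates swapped follows identically, using \eqref{identity2} in place of \eqref{identity1} and Lemma \ref{lem5} in place of Lemma \ref{lem6}.
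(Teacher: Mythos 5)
Your proposal is correct and is exactly the paper's intended argument: the text introduces this corollary with the remark that the ``cross'' cases ``follow similarly by tensoring \eqref{identity1} with Lemma~\ref{lem6}'', which is precisely your decomposition of $(m_0,m_1)\otimes(n_0,0)$ via \eqref{identity1} followed by Lemma~\ref{lem6} applied to the $(0,m_1)\otimes(0,n_1)$ factor, with the same bookkeeping of determinant exponents.
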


\begin{cor}
    Let $0 \leq m_0 \leq n_0 \leq p-1$ and $0 \leq m_1 \leq n_1 \leq p-1$ be such that $p-2 \leq m_0 + n_0 \leq 2p -2$  and $0 \leq m_1 + n_1 \leq p-1$.  We have
 \begin{eqnarray*}
        (m_0,m_1)\otimes (n_0,n_1)&\cong&  (p - m_0 - 2,m_1 - 1) \otimes (p-n_0 - 2,n_1 - 1) \otimes \mathrm{det}^{(m_0 + n_0 + 2 - p) + p} \\
        && \quad \oplus \; (p - m_0 - 2,m_1 + n_1) \otimes (p - n_0 - 2,0) \otimes \mathrm{det}^{m_0 + n_0 + 2 - p}\\
        && \quad \oplus \; (m_0 + n_0 + 1 - p,m_1 -1) \otimes (p-1,n_1 - 1) \otimes \mathrm{det}^p \\
        && \quad \oplus \; (m_0 + n_0 + 1 -p,m_1 + n_1) \otimes (p-1,0).
        \end{eqnarray*}
\end{cor}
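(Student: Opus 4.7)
The plan is to follow the hint given just before the statement: combine the splitting \eqref{identity2} (valid since $p \nmid \binom{m_1+n_1}{n_1}$ under the hypothesis $m_1+n_1 \leq p-1$) with Lemma \ref{lem5} (valid under $p-2 \leq m_0+n_0 \leq 2p-2$). Each of these is a two-term decomposition, so multiplying them should produce the required four summands.

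More concretely, I would begin by writing $(m_0,m_1)\otimes(n_0,n_1) \cong (m_0,m_1)\otimes(n_0,0)\otimes(0,n_1)$ using the standard identification $(a,b) \cong (a,0)\otimes(0,b)$. Applying \eqref{identity2} to the subfactor $(m_0,m_1)\otimes(0,n_1)$ yields
\[
(m_0,m_1)\otimes(0,n_1) \cong ((m_0,m_1-1)\otimes(0,n_1-1)\otimes \mathrm{det}^p) \oplus (m_0,m_1+n_1),
\]
and tensoring the right-hand side with $(n_0,0)$ leaves two products in which the subfactor $(m_0,0)\otimes(n_0,0)$ can be isolated: one of the form $[(m_0,0)\otimes(n_0,0)] \otimes (0,m_1-1)\otimes(0,n_1-1)\otimes \mathrm{det}^p$ and one of the form $[(m_0,0)\otimes(n_0,0)] \otimes (0,m_1+n_1)$.

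Next I would apply Lemma \ref{lem5} to each copy of $(m_0,0)\otimes(n_0,0)$, which under the given hypothesis splits as
\[
(p-m_0-2,0)\otimes(p-n_0-2,0)\otimes \mathrm{det}^{m_0+n_0+2-p} \;\oplus\; (m_0+n_0+1-p,0)\otimes(p-1,0).
\]
Propagating these two summands through the two products above yields four terms in total. Recombining each $(\cdot,0)$-factor with its matching $(0,\cdot)$-partner back into a single bi-symbol then produces exactly the four summands in the statement: the $\mathrm{det}^p$ twist appears precisely in the summands descending from the first branch of \eqref{identity2}, while the $\mathrm{det}^{m_0+n_0+2-p}$ twist appears precisely in the summands descending from the first branch of Lemma \ref{lem5}. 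The main obstacle is purely clerical, namely tracking determinant exponents and Frobenius-twisted factors through the substitution; no new input is needed beyond that already used for the preceding corollary.
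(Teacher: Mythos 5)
Your proposal is correct and is exactly the argument the paper intends: the text preceding the corollary says the cross case "follows by tensoring \eqref{identity2} with Lemma~\ref{lem5}," and your two-step splitting (first \eqref{identity2}, valid since $m_1+n_1\le p-1$ forces $p\nmid\binom{m_1+n_1}{n_1}$, then Lemma~\ref{lem5} applied to each copy of $(m_0,0)\otimes(n_0,0)$) reproduces the four summands with the correct determinant twists. No gap; this matches the paper's route.
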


\subsection{Clebsch-Gordan decomposition for GL\(_2(\Fq)\)}\label{generalf} Now let $q = p^f$ for general $f \geq 1$. For non-negative integers $m_0, m_1, \ldots, m_{f-1}$, let $(m_0,m_1,\ldots,m_{f-1})$ denote the space of multi-homogeneous polynomials over $\Fq$ in the variables $z_0 = (x_0,y_0),z_1 = (x_1,y_1),\ldots, z_{f-1} = (x_{f-1},y_{f-1})$ of multi-degree $(m_0,m_1, \ldots,m_{f-1})$. Then $(m_0,m_1,\ldots,m_{f-1})$ is a $\mathrm{GL}_2(\Fq)$ representation under the action given by:

$$\alpha\cdot \prod_{i=0}^{f-1} P_i(x_i,y_i) = \prod_{i=0}^{f-1} P_i(a^{p^i}x_i+c^{p^i}y_i,b^{p^i}x_i+d^{p^i}y_i).$$

The map $\mathrm{Fr}$ takes $\left( \begin{smallmatrix}a & b \\ c & d\end{smallmatrix} \right)$ to $\left( \begin{smallmatrix}a^p & b^p \\ c^p & d^p\end{smallmatrix} \right)$. In general for a $\mathrm{GL_2}(\Fq)$-representation $V$, let $V \circ \mathrm{Fr}^i$ denote the representation obtained by twisting the action by the $i$-th power of Frobenius. For example, $
(m_0,0,\ldots,0) \circ \mathrm{Fr}^i \cong (0,\ldots,\underset{\text{$i$-th entry}}{m_0},\ldots,0)
$ and if $\mathrm{det}$ is the trivial representation twisted by the determinant, then $\mathrm{det}\circ \mathrm{Fr}^i = \mathrm{det}^{p^i}$ because $(a^{p^i}d^{p^i}-b^{p^i}c^{p^i})=(ad-bc)^{p^i}$ mod $p$. We note that $\mathrm{Fr}^i$ distributes over tensor products and direct sums, i.e., $(V\otimes W)\circ \mathrm{Fr}^i\cong (V\circ \mathrm{Fr}^i)\otimes( W\circ \mathrm{Fr}^i)$  and $(V\oplus W)\circ \mathrm{Fr}^i\cong (V\circ \mathrm{Fr}^i)\oplus( W\circ \mathrm{Fr}^i).$ If $f:V \to W$ is a $\mathrm{GL}_2(\mathbb F_q)$-equivariant map, then it is also an equivariant map from $V\circ \mathrm{Fr}^i $ to $W\circ \mathrm{Fr}^i $. To see this, note
\begin{align*}
   f\left(
   \left( \left( \begin{smallmatrix}
        a & b\\
        c & d
   \end{smallmatrix} \right) \circ \mathrm{Fr}^i \right)
   \cdot v
   \right)
   =  f\left( \left( \begin{smallmatrix}
        a^{p^i} & b^{p^i}\\
        c^{p^i} & d^{p^i}  \end{smallmatrix} \right) \cdot v\right) 
   = \left( \begin{smallmatrix}
        a^{p^i} & b^{p^i}\\
        c^{p^i} & d^{p^i}  \end{smallmatrix} \right) \cdot f(v)
   = \left( \left( \begin{smallmatrix}
        a & b\\
        c & d
   \end{smallmatrix} \right) \circ \mathrm{Fr}^i \right) \cdot f(v).
\end{align*}

As before, we start with the following folklore result from \cite{F.M.}.
We adopt the convention that $(m_0,\cdots,m_{f-1})$ is the zero representation if any of the entries $m_i$ are negative.

\begin{lemma}
    Let $m_0,n_0\geq 0$. Then we have an exact sequence of $\mathrm{GL}_2(\Fq)$-representations
    \begin{align*}
    0 &\to (m_0-1 ,0,...,0)\otimes(n_0-1,0,...,0) \otimes \mathrm{det}^{} \\
    &\to (m_0,0,...,0)\otimes(n_0,0,...,0) \to (m_0+n_0,0,...,0) \to 0
    \end{align*}
which splits if $p \nmid \binom{n_0 + m_0}{n_0}.$
\end{lemma}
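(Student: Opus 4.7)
My plan is to observe that all the representations appearing in the sequence involve polynomials only in the single variable pair $(x_0, y_0)$, and the $\mathrm{GL}_2(\Fq)$-action on each uses only the untwisted matrix entries $a, b, c, d$ (the $i=0$ factor in the multi-homogeneous polynomial construction). Consequently, each $(k,0,\ldots,0)$ is isomorphic, as a $\mathrm{GL}_2(\Fq)$-representation, to the ordinary symmetric power $\mathrm{Sym}^k \Fq^2$ of the standard representation, with no Frobenius twist involved. The statement thus reduces to a classical fact about symmetric powers of the standard representation of $\mathrm{GL}_2$ over a field, which is formally the same as the $\mathrm{GL}_2(\Fpsq)$ case treated in Lemma~\ref{shaliniabhik} (the second tensor factor there simply playing no role in the argument).

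To make the construction explicit, I would realize the middle term $(m_0,0,\ldots,0) \otimes (n_0,0,\ldots,0)$ as the space of bihomogeneous polynomials of bidegree $(m_0,n_0)$ in two distinct variable pairs $(x_0, y_0)$ and $(\tilde x_0, \tilde y_0)$, with $\mathrm{GL}_2(\Fq)$ acting through the untwisted embedding on both pairs. The surjection $\mu$ onto $(m_0+n_0,0,\ldots,0)$ is then the restriction to the diagonal $(\tilde x_0, \tilde y_0) = (x_0, y_0)$, which is polynomial multiplication and is manifestly equivariant and onto. The injection $\iota$ from $(m_0-1,0,\ldots,0) \otimes (n_0-1,0,\ldots,0) \otimes \det$ is multiplication by the bracket $\Delta = x_0 \tilde y_0 - \tilde x_0 y_0$; a direct computation yields $\alpha \cdot \Delta = (ad-bc)\Delta$, accounting for the determinant twist, and injectivity follows because $\Delta$ is not a zero divisor in the polynomial ring. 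The composite $\mu \circ \iota$ vanishes since $\Delta$ restricts to zero on the diagonal, and exactness in the middle then follows from the dimension count $(m_0+1)(n_0+1) - (m_0+n_0+1) = m_0 n_0$.

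For the splitting under the hypothesis $p \nmid \binom{m_0+n_0}{n_0}$, I would construct a $\mathrm{GL}_2(\Fq)$-equivariant section $\sigma$ of $\mu$ by the standard polarization/symmetrization procedure, sending a monomial $x_0^i y_0^{m_0+n_0-i}$ in the target to a weighted sum of bihomogeneous monomials whose images under $\mu$ collapse back to it, with the combinatorial coefficient $\binom{m_0+n_0}{n_0}$ appearing as the overall scalar that must be invertible modulo $p$. Equivariance of $\sigma$ then follows from its natural, uniform definition (alternatively, any two lifts of a given equivariant map differ by a map into $\mathrm{Im}(\iota)$, so one can check equivariance after averaging). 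There is no real conceptual obstacle: the entire argument passes through unchanged from the $f=1$ or $f=2$ cases, since nothing in the proof sees the higher Frobenius twists, and the only minor bookkeeping is verifying the polarization formula.
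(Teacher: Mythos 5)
Your proof is correct, but it is genuinely more than what the paper does: for this lemma the paper offers no argument at all, simply citing it as a folklore result of Kouwenhoven \cite{F.M.} (and, in the $f=2$ incarnation, also \cite{doi:10.1142/S0219498826502701}), whereas you supply a complete, self-contained construction. Your reduction to the untwisted symmetric power $\mathrm{Sym}^{m_0}\mathbb{F}_q^2$ is the right observation (the $i=0$ slot carries no Frobenius twist, and the remaining slots are degree zero, hence trivial), and the three ingredients --- multiplication/diagonal restriction for $\mu$, multiplication by the bracket $\Delta=x_0\tilde y_0-\tilde x_0 y_0$ for $\iota$ with the computation $\alpha\cdot\Delta=(ad-bc)\Delta$ explaining the $\det$ twist, and the dimension count $(m_0+1)(n_0+1)-(m_0+n_0+1)=m_0n_0$ for exactness in the middle --- are exactly the standard proof of the cited result, and they all check out, including the degenerate case $m_0=0$ or $n_0=0$ under the paper's convention that negative entries give the zero representation. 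The polarization section is also correct; the cleanest way to see its equivariance is to define $\sigma(P)$ as the coefficient of $s^{m_0}t^{n_0}$ in $P(sx_0+t\tilde x_0,\,sy_0+t\tilde y_0)$, which is manifestly natural and satisfies $\mu\circ\sigma=\binom{m_0+n_0}{n_0}\,\mathrm{id}$ by expanding $(s+t)^{m_0+n_0}$. The one thing you should strike is the parenthetical suggestion that equivariance could alternatively be obtained ``after averaging'': averaging over $\mathrm{GL}_2(\mathbb{F}_q)$ is not available in characteristic $p$ since $p$ divides the group order, so that fallback does not work; fortunately your primary argument does not need it.
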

Take the Frobenius twist of the above sequence and observe that exactness is preserved. We get the following result:

\begin{lemma}\label{singletupleidentity}
    Let $0 \leq i \leq f-1 $. Let $m_i,n_i \geq 0$. Then we have an exact sequence of $\mathrm{GL}_2(\Fq)$-representations
    \begin{align*}
    0 &\to (0,...,0,m_i-1,0,... ,0)\otimes(0,...,0,n_i-1,0,... ,0) \otimes \mathrm{det}^{p^i} \\
    &\to (0,...,0 ,m_i,0,...,0)\otimes(0,...,0 ,n_i,0,...,0) \to (0,...,0,m_i + n_i,0,...,0) \to 0
    \end{align*}
which splits if $p \nmid \binom{n_i + m_i}{m_i}.$ 
\end{lemma}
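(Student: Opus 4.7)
The plan is to apply the Frobenius twist functor $(-)\circ \mathrm{Fr}^i$ termwise to the exact sequence of the preceding lemma (the $\mathrm{GL}_2(\Fq)$-analogue of Lemma \ref{shaliniabhik}), and simply read off what each term becomes. This is essentially a formal consequence of the properties of Frobenius twisting recorded at the start of Section \ref{generalf}.

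First, I would identify the image of each object. Using $(m,0,\ldots,0)\circ \mathrm{Fr}^i \cong (0,\ldots,0,m,0,\ldots,0)$ (with $m$ in the $i$-th slot), the identity $\det \circ \mathrm{Fr}^i \cong \det^{p^i}$, and the fact that $\mathrm{Fr}^i$ distributes over tensor products, applying $\mathrm{Fr}^i$ to the preceding lemma (with $m_0, n_0$ relabeled as $m_i, n_i$) produces exactly the three objects that appear in the statement of Lemma \ref{singletupleidentity}.

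Second, exactness. The key observation is that $\mathrm{Fr}^i$ changes only the $\mathrm{GL}_2(\Fq)$-action, not the underlying $\Fq$-vector spaces or the $\Fq$-linear maps. Hence injectivity, surjectivity, kernels, and images are all preserved on the nose. The general principle noted just before the lemma --- that any $\mathrm{GL}_2(\Fq)$-equivariant map $f\colon V \to W$ remains equivariant when both source and target are twisted by $\mathrm{Fr}^i$ --- then promotes the resulting short exact sequence of linear maps back to one of $\mathrm{GL}_2(\Fq)$-representations. The splitting assertion is handled identically: if $p\nmid \binom{m_i+n_i}{m_i}$, the equivariant splitting supplied by the preceding lemma twists under $\mathrm{Fr}^i$ to an equivariant splitting of the new sequence. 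There is no substantive obstacle to this argument --- it is entirely formal given the functorial properties of $\mathrm{Fr}^i$ already spelled out.
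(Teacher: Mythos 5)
Your proposal is correct and matches the paper's own argument: the paper likewise obtains Lemma~\ref{singletupleidentity} by applying the $i$-th Frobenius twist to the preceding exact sequence and observing that exactness (and the splitting) is preserved, since $\mathrm{Fr}^i$ only alters the group action and not the underlying linear maps. Your write-up in fact spells out the identification of the twisted objects and the preservation of the splitting in slightly more detail than the paper does.
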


Let $m = (m_0,\cdots,m_{f-1})$, let $e_i$ denote the vector with $1$ in its $i$-th coordinate and $0$ everywhere else (starting with $i = 0$). Just as in the $f = 2$ case, for every $ 0 \leq i \leq f-1$ such that $p \nmid \binom{n_i + m_i}{m_i}$ and $m_i,n_i \geq 1$, we have the isomorphism:
\begin{equation}\label{lemma5identity}
       m\otimes n_ie_i \cong ((m-e_i)\otimes (n_i-1)e_i\otimes \mathrm{det}^{p^i} )\oplus (m + n_ie_i).
\end{equation}

For \(l \in \{0,1\}^f \), define \(\psi_p(l):=\sum_{j=0}^{f-1}l_jp^j \in \mathbb{Z}.\)
For $m, n \in \mathbb{Z}^f$, denote \(m\odot n\) to be the tuple in \(\mathbb{Z}^f\) obtained by taking component-wise product of \(m\) and \(n\).
Let \(\indicator_f\) denote the element in \(\mathbb{Z}^f\) with all components as \(1\).

\begin{theorem} Let \(m=(m_0,m_1,...,m_{f-1}),\ n=(n_0,n_1,...,n_{f-1})\). If \(p\nmid \binom{m_i+n_i}{n_i}\) for all \(0\le i \le f-1\), then
    \[
    m\otimes n \cong \bigoplus_{l\in \{0,1\}^f}(m-l+(\indicator_f -l)\odot n )\otimes (l\odot n - l) \otimes \mathrm{det}^{\psi_p(l)}.
    \]
\end{theorem}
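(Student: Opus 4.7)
The plan is to iterate the splitting isomorphism \eqref{lemma5identity} at each coordinate $i = 0, 1, \ldots, f-1$, exploiting the tensor decomposition $n \cong \bigotimes_{i=0}^{f-1} n_i e_i$ to absorb one factor $n_i e_i$ at a time. Each absorption splits the running tensor product into a ``merged'' summand (which replaces $m_i$ by $m_i + n_i$ in the $m$-piece) and a ``reduced'' summand (which decrements $m_i$ in the $m$-piece, produces $n_i - 1$ in the $i$-th coordinate of a new factor, and contributes a $\mathrm{det}^{p^i}$ twist). After $f$ steps, the decomposition has $2^f$ summands naturally indexed by $l \in \{0,1\}^f$, where $l_i \in \{0, 1\}$ records the choice (merged or reduced) made at step $i$.

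Concretely, I would prove by induction on $k \in \{0, 1, \ldots, f\}$ the intermediate formula
\[ m \otimes n \cong \bigoplus_{l \in \{0,1\}^k} \bigl( m - \tilde l + (\indicator_f - \tilde l) \odot n^{(k)} \bigr) \otimes (\tilde l \odot n - \tilde l) \otimes \mathrm{det}^{\psi_p(\tilde l)} \otimes \bigotimes_{i=k}^{f-1} n_i e_i, \]
where $\tilde l = (l_0, \ldots, l_{k-1}, 0, \ldots, 0) \in \{0,1\}^f$ pads $l$ by zeros and $n^{(k)} = (n_0, \ldots, n_{k-1}, 0, \ldots, 0) \in \mathbb{Z}^f$. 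The base case $k = 0$ is tautological. For the inductive step I apply \eqref{lemma5identity} at index $k$ to the $m$-piece of each of the $2^k$ summands; a direct check shows that the $l_k = 0$ child matches the intermediate formula at level $k+1$ with last bit $0$, while the $l_k = 1$ child matches it with last bit $1$ (including the extra $\mathrm{det}^{p^k}$ twist and the new $n_k - 1$ entry). Setting $k = f$ then gives the theorem, since $\tilde l = l$ and $n^{(f)} = n$.

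The binomial hypothesis $p \nmid \binom{m_i + n_i}{n_i}$ is consumed exactly at the $i$-th induction step to guarantee the splitting in \eqref{lemma5identity}: earlier steps modify only coordinates strictly less than $k$, so the $k$-th coordinate of the $m$-piece in every summand still equals $m_k$, and the splitting criterion at the $k$-th step is precisely the hypothesis with $i = k$. The main difficulty here is combinatorial bookkeeping rather than any conceptual obstacle; degenerate cases ($m_i = 0$ or $n_i = 0$) are handled uniformly by the stated convention that a tuple with a negative entry denotes the zero representation, so one application of the isomorphism form of \eqref{lemma5identity} suffices at every stage.
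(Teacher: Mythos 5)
Your proposal is correct and is essentially the paper's own argument: both proceed by induction on the coordinates of $n$, absorbing one factor $n_ie_i$ at a time via the split short exact sequence \eqref{lemma5identity} and indexing the resulting $2^f$ summands by the binary vector recording the merged/reduced choice at each step. The only differences are cosmetic bookkeeping (you carry the unprocessed factors $\bigotimes_{i\ge k} n_ie_i$ explicitly and start from a tautological base case, while the paper inducts on the largest nonzero index of $n$), plus your welcome explicit remark that the $k$-th coordinate of every $m$-piece is still $m_k$ when the hypothesis $p\nmid\binom{m_k+n_k}{n_k}$ is invoked.
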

\begin{proof}
    Let $j$ be the largest index for which $n_j$ is non-zero. 
    Let $W_j \subset \{0,1\}^f$ denote the subset consisting of tuples of the form $(i_0,\ldots,i_j,0,\ldots,0)$. We show by induction on $j$ the following statement:
    \[
    m\otimes n \cong \bigoplus_{l\in W_j}(m-l+(\indicator_f - l)\odot n ) \otimes (l\odot n - l) \otimes \mathrm{det}^{\psi_p(l)}.
    \]
   It is enough to prove the above statement for all $j$ between $0$ and $f-1$. The case $j = 0$ is the statement given by 
   \begin{align*}
    m\otimes n_0e_0\cong ((m-e_0)\otimes (n_0-1)e_0\otimes \mathrm{det} )\oplus (m + n_0e_0),
\end{align*}
   which is \eqref{lemma5identity} for $i=0$. Let's assume the result for $j = t < f-1$. Then for $j = t + 1$, we can use the isomorphism $n=\sum_{k=0}^{t+1}n_ke_k \cong (\sum_{k=0}^tn_ke_k) \otimes n_{t+1}e_{t+1}$ and the induction hypothesis to get: 
{\small   
\begin{align*}
    m\otimes n &\cong \left(\bigoplus_{l \in W_t}\bigg(m-l+(\indicator_f - l)\odot (\sum_{k=0}^tn_ke_k) \bigg)\otimes \bigg(l\odot (\sum_{k=0}^t n_ke_k)-l\bigg)\otimes \mathrm{det}^{\psi_p(l)} \right)\otimes n_{t+1}e_{t+1}\\
    &\cong \left(\bigoplus_{l \in W_t} \bigg(m - l+(\indicator_f - l)\odot (\sum_{k=0}^tn_ke_k)\bigg) \otimes n_{t+1}e_{t+1}\otimes \bigg(l\odot (\sum_{k=0}^tn_ke_k)-l\bigg) \otimes
    \mathrm{det}^{\psi_p(l)} \right)\\
    &\cong \bigoplus_{l\in W_t} \Bigg[ \left( \bigg(m - l - e_{t+1} + (\mathbbm{1}_f - l)\odot (\sum_{k=0}^tn_ke_k)  \bigg)\otimes (n_{t+1}-1)e_{t+1} \otimes (l\odot (\sum_{k=0}^tn_ke_k)-l)  \otimes \mathrm{det}^{p^{t+1}}\right)\\
    &  \;\;\;\;\;\;\;\;\;\;\;\; \oplus \left( \bigg(m - l + n_{t+1}e_{t+1} + (\mathbbm{1}_f - l)\odot (\sum_{k=0}^tn_ke_k)\bigg)\otimes (l\odot (\sum_{k=0}^tn_ke_k)-l) \right) \Bigg]\otimes \mathrm{det}^{\psi_p(l)} \\
    &\cong \bigoplus_{l\in W_t} \Bigg[ \Bigg( \bigg(m - l - e_{t+1} + (\mathbbm{1}_f - l - e_{t+1})\odot (\sum_{k=0}^{t+1}n_ke_k)  \bigg)\otimes \bigg((l+e_{t+1})\odot (\sum_{k=0}^{t+1}n_ke_k)-l-e_{t+1}\bigg) \\  & \;\;\;\;\;\;\;\;\;\;\;\otimes \mathrm{det}^{p^{t+1}}\Bigg)
    \oplus \left( \bigg(m - l + (\mathbbm{1}_f - l)\odot (\sum_{k=0}^{t+1}n_ke_k)\bigg)\otimes (l\odot (\sum_{k=0}^{t+1}n_ke_k)-l) \right) \Bigg]\otimes \mathrm{det}^{\psi_p(l)} \\
   &\cong  \bigoplus_{l \in W_{t+1}} (m-l+(\indicator_f - l)\odot n) \otimes (l \odot n - l) \otimes \mathrm{det}^{\psi_p(l)}.
\end{align*}}
The third isomorphism uses \eqref{lemma5identity}
with $i = t+1$. The fourth isomorphism uses the identity $$(n_{t+1}-1)e_{t+1} \otimes (l\odot (\sum_{k=0}^tn_ke_k)-l)=(l+e_{t+1})\odot (\sum_{k=0}^{t+1}n_ke_k)-l-e_{t+1}).$$
The last isomorphism follows from the fact that the two terms in the direct sum correspond to the terms with $l_{t+1} = 1,0$ respectively. This completes the induction and proves the theorem.
\end{proof}

The following corollary generalizes Corollary~\ref{smallweight}.
\begin{corollary}
  Let $0\le m_i\le n_i \le p-1$  be such that $m_i+n_i\le p-1$ for all $0\le i \le f-1.$ We have 
    \begin{eqnarray*}
        (m_0,...,m_i,...,m_{f-1}) &\otimes&(n_0,...,n_i,...,n_{f-1}) \\&\cong& \bigoplus_{k_{f-1}=0}^{m_{f-1}}\cdots\bigoplus_{k_0=0}^{m_0}(m_0+n_0-2k_0,...,m_{f-1}+n_{f-1}-2k_{f-1})\otimes \mathrm{det}^{\sum\limits_{i=0}^{f-1}k_ip^i}. 
    \end{eqnarray*} 
    
\end{corollary}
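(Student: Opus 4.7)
The plan is to directly generalize the proof of Corollary~\ref{smallweight} from $f=2$ to arbitrary $f$, using Lemma~\ref{singletupleidentity} as the workhorse and iterating it separately in each of the $f$ coordinates. The key intermediate result I would establish is the single-coordinate formula: for each fixed $0 \leq i \leq f-1$,
\[
(m_0,\ldots,m_{f-1}) \otimes n_i e_i \;\cong\; \bigoplus_{k_i=0}^{m_i} (m_0,\ldots,m_i+n_i-2k_i,\ldots,m_{f-1}) \otimes \mathrm{det}^{k_i p^i}.
\]
This is obtained by repeatedly applying the split form of Lemma~\ref{singletupleidentity}: one application peels off the summand $(m_0,\ldots,m_i+n_i,\ldots,m_{f-1})$ while leaving a residual $(m_0,\ldots,m_i-1,\ldots,m_{f-1}) \otimes (n_i-1)e_i \otimes \mathrm{det}^{p^i}$, and reapplying to that residual $m_i$ times in total exhausts the first factor. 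At the $j$-th step the splitting condition is $p \nmid \binom{(m_i-j)+(n_i-j)}{m_i-j}$, which holds automatically because $m_i+n_i \leq p-1$ forces all the numerators in the binomial coefficient to be $p$-adic units (by Lucas' theorem, or just directly since the numerator factorial involves no multiple of $p$).

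Having the single-coordinate formula, I would then chain the $f$ instances together using the factorization $(n_0,\ldots,n_{f-1}) = n_0 e_0 \otimes n_1 e_1 \otimes \cdots \otimes n_{f-1}e_{f-1}$. Applying the formula at $i=0$ produces a direct sum indexed by $k_0$; tensoring each summand with $n_1 e_1$ and applying the formula at $i=1$ gives a double sum indexed by $(k_0,k_1)$, with the determinant twists $\mathrm{det}^{k_0}$ and $\mathrm{det}^{k_1 p}$ combining to $\mathrm{det}^{k_0+k_1 p}$ via $\mathrm{det}^{a} \otimes \mathrm{det}^{b} \cong \mathrm{det}^{a+b}$. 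Iterating through $i = 2,\ldots,f-1$ and collecting the accumulated twists $\mathrm{det}^{k_0} \otimes \mathrm{det}^{k_1 p} \otimes \cdots \otimes \mathrm{det}^{k_{f-1} p^{f-1}} = \mathrm{det}^{\sum_i k_i p^i}$ yields exactly the direct sum in the statement. Equivalently, one could phrase this as a clean induction on $f$, with the inductive step tensoring the $(f-1)$-variable formula with the single-coordinate formula for $i=f-1$.

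The only genuine obstacle is bookkeeping: one must carefully track how the $\mathrm{det}^{p^i}$ factors pull through tensor products and direct sums (which is clean thanks to the distributivity noted at the start of Section~\ref{generalf}), and verify that the binomial splitting hypothesis of Lemma~\ref{singletupleidentity} holds at every intermediate stage. Both are immediate from the standing assumption $m_i+n_i \leq p-1$, so no new ingredient beyond Lemma~\ref{singletupleidentity} is required; the proof is essentially a mechanical unfolding of the $f=2$ argument.
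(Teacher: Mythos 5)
Your proposal is correct and follows essentially the same route as the paper: both proofs iterate the split form of Lemma~\ref{singletupleidentity} within each coordinate (the condition $m_i+n_i\le p-1$ guaranteeing every binomial coefficient encountered is prime to $p$) and then combine the $f$ coordinates by tensoring, accumulating the twists $\mathrm{det}^{k_ip^i}$ into $\mathrm{det}^{\sum_i k_ip^i}$. The only cosmetic difference is that the paper first computes $m_ie_i\otimes n_ie_i$ in isolation and tensors the $f$ results together, whereas you carry the full tuple through each successive coordinate; these are the same argument organized differently.
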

\begin{proof}
Repeatedly applying Lemma~\ref{singletupleidentity}, we obtain
    \begin{eqnarray*}
        (0,...,m_i,...,0) \otimes(0,...,n_i,...,0) &\cong& \bigoplus_{k_i=0}^{m_i}(0,...,m_i+n_i-2k_i,...,0)\otimes \mathrm{det}^{k_ip^i}. 
    \end{eqnarray*}
    Taking the tensor product over $i\in \{0,1,...,f-1\}$ yields the corollary. 
\end{proof}

The following lemma generalizes Lemmas \ref{lem5} and \ref{lem6} and can be proved similarly. 

\begin{lemma}
     \label{largeweightsf}
     Let $0\le m_i\le n_i\le p-1$ be such that $p-2\le m_i+n_i\le 2p-2.$ We have 
     \begin{eqnarray*}
        m_ie_i\otimes n_ie_i&\cong& (p-m_i-2)e_i\otimes (p-n_i-2)e_i\otimes \mathrm{det}^{p^i(m_i+n_i+2-p)}\\
        &&\oplus~ (m_i+n_i + 1 - p)e_i\otimes (p-1)e_i.
     \end{eqnarray*}
\end{lemma}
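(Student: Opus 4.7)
The plan is to deduce this lemma from Lemma \ref{lem5} by applying the $i$-th Frobenius twist, exactly in the spirit of how Lemma \ref{lem6} was obtained from Lemma \ref{lem5} for $f=2$.

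First I would observe that the Glover-style induction proving Lemma \ref{lem5} carries over verbatim to the $\mathrm{GL}_2(\mathbb{F}_q)$ setting in the $0$-th coordinate, yielding the identity
\begin{equation*}
m_0 e_0 \otimes n_0 e_0 \cong (p-m_0-2)e_0 \otimes (p-n_0-2)e_0 \otimes \mathrm{det}^{m_0+n_0+2-p} \oplus (m_0+n_0+1-p)e_0 \otimes (p-1)e_0
\end{equation*}
as $\mathrm{GL}_2(\mathbb{F}_q)$-representations whenever $p-2 \le m_0+n_0 \le 2p-2$. The induction only invokes the $i=0$ case of Lemma \ref{singletupleidentity} (the direct analogue of Lemma \ref{shaliniabhik} for the ambient group $\mathrm{GL}_2(\mathbb{F}_q)$), associativity of the tensor product, and the Krull–Schmidt theorem; none of these steps is sensitive to whether the ambient group is $\mathrm{GL}_2(\mathbb{F}_p)$, $\mathrm{GL}_2(\mathbb{F}_{p^2})$, or $\mathrm{GL}_2(\mathbb{F}_q)$.

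Next I would apply the functor $(-)\circ \mathrm{Fr}^i$ to both sides of this identity. As recorded at the start of Section~\ref{generalf}, Frobenius twisting distributes over tensor products and direct sums, preserves equivariance of maps (hence of isomorphisms), identifies $m e_0 \circ \mathrm{Fr}^i$ with $m e_i$ (a relabeling of the polynomial variables), and converts $\mathrm{det}$ into $\mathrm{det}^{p^i}$. Consequently every factor $\star e_0$ becomes $\star e_i$ and the determinant exponent $m_i+n_i+2-p$ is multiplied by $p^i$, delivering precisely the asserted decomposition.

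The argument is mechanical and no real obstacle arises once the Frobenius-twist formalism of Section~\ref{generalf} is in hand. The only bookkeeping to verify explicitly, if one wants to be pedantic, is the identification $m e_0 \circ \mathrm{Fr}^i \cong m e_i$, which is immediate from comparing the $\mathrm{GL}_2(\mathbb{F}_q)$-actions $\alpha \cdot P(x_0,y_0) = P(ax_0+cy_0,\, bx_0+dy_0)$ with $\alpha \cdot P(x_i,y_i) = P(a^{p^i}x_i+c^{p^i}y_i,\, b^{p^i}x_i+d^{p^i}y_i)$.
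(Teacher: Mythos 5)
Your proposal is correct and follows exactly the route the paper intends: the paper's proof of Lemma~\ref{largeweightsf} is precisely ``run the Glover-style induction of Lemma~\ref{lem5} (which only uses Lemma~\ref{singletupleidentity}, associativity, and Krull--Schmidt) and then apply $\mathrm{Fr}^i$ as in the derivation of Lemma~\ref{lem6}.'' Nothing is missing.
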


The following result generalizes Corollary~\ref{largeweights}.

\begin{cor}
    Let $m = (m_0,\ldots,m_{f-1})$ and  $n = (n_0,\ldots,n_{f-1})$ with $0 \leq m_i \leq n_i \leq p-1$ be such that $p-2 \leq m_i + n_i \leq 2p-2$. For $l \in \{0,1\}^f$, let $\hat\psi_p(l) = \sum_i l_i (m_i + n_i + 2 - p)p^i$. We have
    \begin{align*}
    m \otimes n \cong \bigoplus_{l \in \{0,1\}^f}&\bigg[\big( l \odot ((p-2)\mathbbm{1}_f - m) + (\mathbbm{1}_f - l)\odot(m + n - (p-1)\mathbbm{1}_f)\big)\\
    &\otimes \big( l\odot ((p-2)\mathbbm{1}_f - n) + (\mathbbm{1}_f - l)\odot(p-1)\mathbbm{1}_f \big) \otimes \mathrm{det}^{\hat\psi_p(l)}\bigg].
    \end{align*}
\end{cor}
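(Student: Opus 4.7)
The plan is to apply Lemma~\ref{largeweightsf} in each coordinate $i \in \{0, \ldots, f-1\}$ and then distribute the resulting $f$-fold tensor of two-term direct sums to produce the $2^f$ summands naturally indexed by $l \in \{0,1\}^f$.

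First, using the very definition of the tuple notation as a tensor of Frobenius-twisted symmetric powers, one has $m \cong \bigotimes_{i=0}^{f-1} m_i e_i$ and $n \cong \bigotimes_{i=0}^{f-1} n_i e_i$, hence
\[
m \otimes n \;\cong\; \bigotimes_{i=0}^{f-1} \bigl( m_i e_i \otimes n_i e_i \bigr).
\]
Since the hypothesis $p-2 \leq m_i + n_i \leq 2p-2$ is assumed in every coordinate, Lemma~\ref{largeweightsf} applies to each of the $f$ factors, decomposing $m_i e_i \otimes n_i e_i$ as a sum of two explicit terms.

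Next, I would distribute the tensor product over the direct sums using bilinearity of $\otimes$, producing $2^f$ summands indexed by $l \in \{0,1\}^f$: in coordinate $i$, the value $l_i = 1$ records the choice of the first summand $(p-m_i-2)e_i \otimes (p-n_i-2)e_i \otimes \mathrm{det}^{p^i(m_i+n_i+2-p)}$ from Lemma~\ref{largeweightsf}, and $l_i = 0$ records the choice of the second summand $(m_i+n_i+1-p)e_i \otimes (p-1)e_i$. The $i$-th coordinate of the first tensor factor in the $l$-summand is therefore $l_i(p-m_i-2) + (1-l_i)(m_i+n_i+1-p)$, which is precisely the $i$-th coordinate of $l \odot ((p-2)\mathbbm{1}_f - m) + (\mathbbm{1}_f - l) \odot (m + n - (p-1)\mathbbm{1}_f)$. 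An identical coordinatewise check handles the second tensor factor, and the determinant exponents accumulate as $\sum_{i : l_i = 1} p^i(m_i+n_i+2-p) = \hat\psi_p(l)$, matching the statement.

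I do not anticipate a substantive obstacle: once Lemma~\ref{largeweightsf} is in hand, the proof is a purely formal expansion. The only real bookkeeping is the coordinatewise case analysis on $l_i \in \{0,1\}$ needed to translate the componentwise output of distributivity into the compact $\odot$ and $\hat\psi_p$ notation appearing in the statement.
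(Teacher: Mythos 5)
Your proposal is correct and rests on exactly the same ingredients as the paper's proof, namely the coordinatewise decomposition of Lemma~\ref{largeweightsf} followed by distributing tensor products over direct sums; the only difference is organizational, as the paper carries out the distribution via an induction on the coordinate index (tensoring in $n_{t+1}e_{t+1}$ one step at a time), whereas you first regroup $m\otimes n\cong\bigotimes_i(m_ie_i\otimes n_ie_i)$ and expand all $f$ factors at once. Your bookkeeping of the $l_i\in\{0,1\}$ choices and of the exponent $\hat\psi_p(l)$ matches the statement, so no gap.
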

\begin{proof}
Let $h^j = \sum_{i = 0}^j e_i$. Let $W_j \subset \{0,1\}^f$ denote the subset consisting of elements of the form $(i_0,\ldots,i_j,0,\ldots,0)$.  We show by induction on $j$ that
 \begin{eqnarray*}
    m \otimes \sum_{i = 0}^jn_ie_i & \cong & \bigoplus_{l \in W_j} \bigg[\big(h^j\odot l \odot ((p-2)\mathbbm{1}_f - m) + h^j\odot(\mathbbm{1}_f - l)\odot(m + n - (p-1)\mathbbm{1}_f)\big)\\&&\quad \qquad \otimes \; ((\mathbbm{1}_f - h^j)\odot m)
    \otimes \big(h^j\odot l \odot ((p-2)\mathbbm{1}_f - n) + h^j \odot(\mathbbm{1}_f - l)\odot(p-1)\mathbbm{1}_f \big)\\ &&\qquad \quad \otimes \,\mathrm{det}^{\hat\psi_p(l)}\bigg].
\end{eqnarray*}
For $j = 0$, this is a consequence of the Lemma~\ref{largeweightsf}. Suppose we have the above formula for $j = t$. Then for $j = t+1$, we have
\begin{eqnarray*}
    m \otimes \sum_{i = 0}^{t+1}n_ie_i & \cong &\bigoplus_{l \in W_t} \bigg[\big(h^t\odot l \odot ((p-2)\mathbbm{1}_f - m) + h^t\odot(\mathbbm{1}_f - l)\odot(m + n - (p-1)\mathbbm{1}_f)\big)\\&&\quad \qquad \otimes \; ((\mathbbm{1}_f - h^t)\odot m)\otimes \big(h^t\odot l \odot ((p-2)\mathbbm{1}_f - n) + h^t \odot(\mathbbm{1}_f - l)\odot(p-1)\mathbbm{1}_f \big) \\&&\quad \qquad \otimes \; \mathrm{det}^{\hat\psi_p(l)} \bigg] \otimes n_{t+1}e_{t+1}.\\
\end{eqnarray*}
Writing $\mathbbm{1}_f - h^t = \mathbbm{1}_f - h^{t+1} + e_{t+1}$ and applying Lemma~\ref{largeweightsf}, we obtain 
\begin{eqnarray*}((\mathbbm{1}_f - h^t)\odot m)\otimes n_{t+1}e_{t+1} &\cong &((\mathbbm{1}_{f} - h^{t+1})\odot m) \otimes \bigg [ (p - m_{t+1} - 2)e_{t+1} \otimes (p - n_{t+1} - 2)e_{t+1}\\ && \quad \otimes \; \mathrm{det}^{p^{t+1}(m_{t+1} + n_{t+1} + 2 - p)}\oplus \big((m_{t+1} + n_{t+1} + 1 - p)e_{t+1} \otimes (p-1)e_{t+1}\big) \bigg]. 
\end{eqnarray*}
We also note that for $l \in W_{t}$, we have
{\tiny
\begin{eqnarray*}
    h^{t}\odot l \odot((p-2)\mathbbm{1}_f - m) \otimes (p - m_{t+1} - 2)e_{t+1} &\cong& h^{t+1} \odot (l + e_{t+1})\odot((p-2)\mathbbm{1}_f - m)\\
    h^{t}\odot l \odot((p-2)\mathbbm{1}_f - n) \otimes (p - n_{t+1} - 2)e_{t+1} &\cong& h^{t+1} \odot (l + e_{t+1})\odot((p-2)\mathbbm{1}_f - n)\\
    h^t\odot(\mathbbm{1}_{f} - l)\odot(p-1)\mathbbm{1}_f &\cong& h^{t+1}\odot(\mathbbm{1}_{f} - l-e_{t+1})\odot(p-1)\mathbbm{1}_f \\
    h^t\odot(\mathbbm{1}_{f} - l)\odot(p-1)\mathbbm{1}_f \otimes (p-1)e_{t+1} &\cong& h^{t+1}\odot(\mathbbm{1}_{f} - l)\odot(p-1)\mathbbm{1}_f \\
    h^{t}\odot (\mathbbm{1}_{f} - l)\odot( m  + n  - (p-1)\mathbbm{1}_f) & \cong  &  h^{t+1}\odot (\mathbbm{1}_{f} - l - e_{t+1})\odot( m  + n  - (p-1)\mathbbm{1}_f) \\
        h^{t}\odot (\mathbbm{1}_{f} - l)\odot( m  + n  - (p-1)\mathbbm{1}_f)\otimes (m_{t+1} + n_{t+1} - (p-1))e_{t+1} & \cong &  h^{t+1}\odot (\mathbbm{1}_{f} - l)\odot( m  + n  - (p-1)\mathbbm{1}_f).
    \end{eqnarray*}
}    
\!\!\!Substituting these above, we get
{\small \begin{eqnarray*}
    m \otimes \sum_{i = 0}^{t+1}n_ie_i \cong & \bigoplus\limits_{l \in W_t}&\bigg[\big(h^{t+1}\odot (l + e_{t+1}) \odot ((p-2)\mathbbm{1}_f - m) \\
    &&\qquad \qquad + h^{t+1}\odot(\mathbbm{1}_f - l - e_{t+1})
    \odot (m + n - (p-1)\mathbbm{1}_f)\big) \\
    && \qquad \otimes \> ((\mathbbm{1}_f - h^{t+1})\odot m)\otimes \big(h^{t+1}\odot (l + e_{t+1})
    \odot((p-2)\mathbbm{1}_f - n) \\ 
    &&\qquad \qquad  + h^{t+1}  \odot(\mathbbm{1}_f - l - e_{t+1})\odot(p-1)\mathbbm{1}_f \big) \otimes \mathrm{det}^{\hat\psi_p(l + e_{t+1})}\\
    &&\quad  \oplus \> \big(h^{t+1}\odot l \odot ((p-2)\mathbbm{1}_f - m) \\
    &&\qquad \qquad + h^{t+1}\odot(\mathbbm{1}_f - l)
    \odot (m + n - (p-1)\mathbbm{1}_f)\big) \\
    &&\qquad \otimes \> ((\mathbbm{1}_f - h^{t+1})\odot m)\otimes \big(h^{t+1}\odot l
    \odot ((p-2)\mathbbm{1}_f - n) \\
    &&\qquad \qquad + h^{t+1} \odot(\mathbbm{1}_f - l)\odot(p-1)\mathbbm{1}_f \big) \otimes \mathrm{det}^{\hat\psi_p(l)} \bigg].
\end{eqnarray*}}
\!\!In the above expression we observe that the terms preceding the direct sum correspond to the ones given by those $l \in W_{t+1}$ whose $(t+1)$-th coordinate is $1$. The terms after the direct sum correspond to those $l \in W_{t+1}$ whose $(t+1)$-th coordinate is $0$. This proves the induction step and completes the proof of the corollary.
\end{proof}

Finally we treat the case where we take the tensor product with the symmetric power representations 
$(p^k-1,p^k-1,...,p^k-1)$. This result generalizes
Theorem~\ref{projective}.
\begin{theorem}
     Let $(m_0,m_1,\ldots,m_{f-1})$ be a representation of $\mathrm{GL}_2(\mathbb F_q).$ Let $0 \leq k \in \mathbb{Z}$. Then
    \[(m_0,\ldots,m_{f-1}) \otimes (p^k-1)\mathbbm{1}_f \cong ((m_{k}+1)p^k-1,\ldots,(m_{f-1}+1)p^k-1,(m_0+1)p^k-1,\ldots,(m_{k-1}+1)p^k -1).\]
\end{theorem}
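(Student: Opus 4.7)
The proof will be a direct generalization of Theorem~\ref{projective}, replacing the $f=2$ swap $(i \leftrightarrow i{+}1)$ by a cyclic shift by $k$ positions, and replacing the Frobenius power $p$ by $p^k$. The plan is to factor an explicit surjection as the tensor of a cyclic-shift-twisted injection with the identity, followed by multiplication, and then conclude by a dimension count.

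First I would define, for each $k\ge 0$, the map
\[
\beta_k : (m_0,\ldots,m_{f-1}) \longrightarrow (m_k p^k,\, m_{k+1}p^k,\,\ldots,\, m_{k-1}p^k),
\]
(indices taken mod $f$) by sending each monomial factor $P_i(x_i,y_i)$ to $P_i(x_{i-k}^{p^k},y_{i-k}^{p^k})$, i.e. a polynomial in variables $x_j,y_j$ with $j = i-k \pmod f$. Evidently $\beta_k$ is linear and injective. The first key check is that $\beta_k$ is $\mathrm{GL}_2(\mathbb{F}_q)$-equivariant: the computation is essentially the one in Theorem~\ref{projective}, using $a^{p^f}=a$ for $a\in \mathbb{F}_q$ to rewrite $a^{p^i}=(a^{p^{i-k}})^{p^k}$ when $i-k$ needs to be reduced mod $f$. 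This index bookkeeping is the one place where one must be careful, but it is routine once one recognizes that exponents of elements of $\mathbb{F}_q$ depend only on the exponent mod $f$.

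Next I would consider the composition
\[
(m_0,\ldots,m_{f-1})\otimes (p^k-1)\mathbbm{1}_f
 \xrightarrow{\beta_k\otimes \mathrm{Id}}
 (m_k p^k,\ldots,m_{k-1} p^k)\otimes (p^k-1)\mathbbm{1}_f
 \xrightarrow{\varphi}
 ((m_k+1)p^k-1,\ldots,(m_{k-1}+1)p^k-1),
\]
where $\varphi$ is the multiplication map $P\otimes Q \mapsto PQ$. Both factors are $\mathrm{GL}_2(\mathbb{F}_q)$-equivariant, so the composite is as well.

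To see surjectivity, I would track a general tensor of monomials
$\prod_i x_i^{l_i}y_i^{m_i-l_i}\otimes \prod_i x_i^{s_i}y_i^{p^k-1-s_i}$
with $0\le l_i\le m_i$ and $0\le s_i\le p^k-1$. Applying $\beta_k\otimes\mathrm{Id}$ and then multiplying, one finds that the image is
\[
\prod_{j=0}^{f-1} x_j^{l_{j+k}p^k+s_j}\, y_j^{(m_{j+k}-l_{j+k})p^k+p^k-1-s_j}.
\]
As $l_{j+k}$ ranges over $\{0,\ldots,m_{j+k}\}$ and $s_j$ ranges over $\{0,\ldots,p^k-1\}$, the exponent of $x_j$ ranges over $\{0,\ldots,(m_{j+k}+1)p^k-1\}$, which is exactly the allowed range in the target; similarly for $y_j$. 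Hence every monomial of the target is hit, so the composition is surjective.

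Finally I would compare dimensions: both sides have dimension $\prod_{i=0}^{f-1}(m_i+1)p^k$, so the surjection is an isomorphism. The main (and really the only) obstacle is keeping the cyclic indices straight when verifying equivariance of $\beta_k$; once that is settled the rest is a direct adaptation of the proof of Theorem~\ref{projective}.
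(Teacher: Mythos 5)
Your proposal is correct and is essentially identical to the paper's own proof: the same cyclic-shift map $\beta_k$, the same equivariance check via $a^{p^j}=(a^{p^{j-k}})^{p^k}$, the same factorization through the multiplication map, and the same surjectivity-plus-dimension-count conclusion. Your image formula $\prod_j x_j^{l_{j+k}p^k+s_j}y_j^{(m_{j+k}-l_{j+k})p^k+p^k-1-s_j}$ matches the paper's after the reindexing $j\mapsto j+k$.
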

\begin{proof}  We define a map $\beta_k:(m_0,m_1,...,m_{f-1})\rightarrow (m_kp^k,m_{k+1}p^k,...,m_0p^k,...,m_{k-1}p^k)$ given by 
$$P_0(x_0,y_0)\cdots P_{j}(x_j,y_j)\cdots P_{f-1}(x_{f-1},y_{f-1})\mapsto P_{0}(x_{f-k}^{p^k},y_{f-k}^{p^k})\cdots P_{j}(x_{j-k}^{p^k},y_{j-k}^{p^k})\cdots P_{f-1}(x_{f-1-k}^{p^k},y_{f-1-k}^{p^k})$$
with the convention that an index when negative is replaced by the congruent index mod $f$ with representative in $[0,f-1]$.
Clearly, $\beta$ is an injective linear map. We check that it is $\mathrm{GL}_2(\Fq)$-equivariant. Let $\alpha = \left( \begin{smallmatrix} a & b \\ c & d \end{smallmatrix} \right) \in \mathrm{GL}_2(\Fq)$. Then

\begin{align*}
 \beta_{k}(\alpha\cdot \prod_{j=0}^{f-1}P_j( x_j,y_j)) &= \beta_{k}(\prod_{j=0}^{f-1}P_j(a^{p^j} x_j + c^{p^j}y_j,b^{p^j} x_j + d^{p^j}y_j)\\
 &=\prod_{j=0}^{f-1} P_j(a^{p^j} x^{p^k}_{j-k} + c^{p^j}y^{p^k}_{j-k},b^{p^j} x^{p^k}_{j-k} + d^{p^j}y^{p^k}_{j-k})
 \\
 &=\prod_{j=0}^{f-1} P_j((a^{p^{j-k}} x_{j-k} + c^{p^{j-k}}y_{j-k})^{p^k},(b^{p^{j-k}} x_{j-k} + d^{p^{j-k}}y_{j-k})^{p^k})\\
 &= \alpha \cdot \prod_{j=0}^{f-1}P_j(x_{j-k}^{p^k},y_{j-k}^{p^k})\\
 &= \alpha\cdot\beta_{k}(P(x_j,y_j)).
\end{align*}
Now we define the following sequence of homomorphisms:
\begin{align*}
   (&m_0,m_1,\ldots,m_{f-1}) \otimes (p^k-1)\mathbbm 1_f \xrightarrow{\beta_{k}\otimes\mathrm{Id}} (m_kp^k,m_{k+1}p^k,...,m_0p^k,...,m_{k-1}p^k)\otimes (p^k-1)\mathbbm 1_f \\&\xrightarrow{\varphi} ((m_{k}+1)p^k-1,\ldots,(m_{f-1}+1)p^k-1,(m_0+1)p^k-1,\ldots,(m_{k-1}+1)p^k -1),
\end{align*} 
where the second map $\phi$ is given by $P\otimes Q\mapsto PQ.$ Since both the maps in the above sequence are $\mathrm{GL}_2(\Fq)$-equivariant, the composition is also $\mathrm{GL}_2(\Fq)$-equivariant. The image of any monomial $\prod_{j=0}^{f-1}x_{j}^{l_j}y_{j}^{m_j-l_j}\otimes \prod_{j=0}^{f-1}x_{j}^{s_j}y_{j}^{p^k-1-s_j}$ under the composition map is given by $$\prod_{j=0}^{f-1}x_{j-k}^{l_{j}p^k+s_{j-k}}y_{j-k}^{(m_{j}-l_j)p^k+p^k-1-s_{j-k}}.$$
For any $j,$ as $l_j$ varies from $0$ to $m_j$ and as $s_{j-k}$ varies from $0$ to $p^k-1$, we get that $l_{j}p^k+s_{j-k}$ varies from $0$ to $(m_{j}+1)p^k-1.$ Thus the composition map is surjective. Now a comparison of the dimension of the two spaces shows that this surjection is an isomorphism.
\end{proof}

\section{Structure of $V_r/V_r^{(m+1)}$}

\subsection{Application of Clebsch-Gordan to the structure of $V_r/V_r^{**}$}

By Rozensztajn \cite{sandra} for $f = 1$ and Ghate-Jana \cite{GhateJana+2025+1503+1543} for general $f \geq 1$, we know that $V_r/V_r^*$ is a principal series. One may ask about the structure of $V_r/V_r^{**}$. For $f = 1$, it is well known to be an extension of principal series. Here we use the Clebsch-Gordan decompositions in the previous section to investigate the case of $f = 2$. 

Let $G = \mathrm{GL}_2$ and let $B$ be the subgroup of upper triangular matrices. By \cite{GhateJana+2025+1503+1543}, Theorem 1.3,  if \(p\nmid r_0,\ p\nmid r_1\), then 
\[\frac{V_r}{V_r^{**}}\cong 
\text{ind}_{B(\mathbb{F}_q)}^{G(\mathbb{F}_q)} \> d^{r_0-1+p(r_1-1)} \otimes(1,1).
\]
Write \(r_0-1+p(r_1-1)\equiv a= a_0 + pa_1\) mod \((p^2-1)\), where \(0\le a_i<p\).  By Breuil's Columbia
notes \cite[Theorem 7.6]{Breuil} (see also Breuil-Pa{\v{s}}k{\={u}}nas \cite{BP} and Diamond \cite{diamond}), we conclude that if \(a \notin \{ 0,p^2-1\}\), then \(\text{ind}_{B(\mathbb{F}_q)}^{G(\mathbb{F}_q)} \> d^a
\) has four Jordan-H\"older factors (weights) whose socle filtration is given by the following diagram:
\[
\begin{tikzpicture}[>=stealth, x=4cm, y=-1.5cm]
  \node (A) at (1,.5) {\text{\shortstack{$(p{-}1{-}a_0,\,p{-}1{-}a_1)\otimes D^a$}}};
  \node (B) at (2,1.5) {\text{\shortstack{$(p{-}2{-}a_0,\,a_1{-}1)\otimes D^{1+a_0}$}}};
  \node (C) at (0,1.5) {\text{\shortstack{$(a_0{-}1,\,p{-}2{-}a_1)\otimes D^{(1+a_1)p}$}}};
  \node (D) at (1,2.5)   {\text{$(a_0,a_1)$}};

  \draw[-] (A) -- (B);
  \draw[-] (A) -- (C);
  \draw[-] (B) -- (D);
  \draw[-] (C) -- (D);
\end{tikzpicture}
\]
where we write $D = \det$ for ease of notation.


If \(a_0,a_1\notin \{0,1,p-2,p-1\}\), then tensoring each of
the four terms above with $(1,1)$, by the Clebsch-Gordan formula in Corollary~\ref{cgtheorem}, we obtain
the following sixteen Jordan-H\"older factors in  \(V_r/V_r^{**}\):

{
\[\resizebox{.8\textwidth}{!}{
\begin{tikzpicture}[>=stealth, x=5cm, y=-2cm]

  \node (A) at (1,1) {\text{\shortstack[l]{%
    $(p{-}2{-}a_0,\,p{-}2{-}a_1)\otimes D^{a+p+1} \oplus(p{-}2{-}a_0,\,p{-}a_1)\otimes D^{a+1} $\\ $\oplus\;(p{-}a_0,\,p{-}2{-}a_1)\otimes D^{a+p} \oplus(p{-}a_0,\,p{-}a_1)\otimes D^{a}$  }}};

 \node (B) at (2,2) {\text{\shortstack[l]{%
    $(p{-}3{-}a_0,\,a_1{-}2)\otimes D^{2+a_0+p} $\\
    $\oplus\;(p{-}3{-}a_0,\,a_1)\otimes D^{2+a_0} $\\
    $\oplus\;(p{-}1{-}a_0,\,a_1{-}2)\otimes D^{1+a_0+p} $\\
    $\oplus\;(p{-}1{-}a_0,\,a_1)\otimes D^{1+a_0}$
  }}};

  \node (C) at (0,2) {\text{\shortstack[l]{%
    $(a_0{-}2,\,p{-}3{-}a_1)\otimes D^{(2+a_1)p+1} $\\
    $\oplus\;(a_0{-}2,\,p{-}1{-}a_1)\otimes D^{(1+a_1)p+1} $\\
    $\oplus\;(a_0,\,p{-}3{-}a_1)\otimes D^{(2+a_1)p} $\\
    $\oplus\;(a_0,\,p{-}1{-}a_1)\otimes D^{(1+a_1)p}$
  }}};

  \node (D) at (1,3) {\text{\shortstack[l]{%
    $(a_0{-}1,\,a_1{-}1)\otimes D^{p+1} \oplus (a_0{-}1,\,a_1{+}1)\otimes D$\\
    $ \oplus\;(a_0{+}1,\,a_1{-}1)\otimes D^p \oplus(a_0{+}1,\,a_1{+}1).$
    }}};
  \draw[-] (A) -- (B);
  \draw[-] (A) -- (C);
  \draw[-] (B) -- (D);
  \draw[-] (C) -- (D);
\end{tikzpicture}}\]
}

Theorem 11.4 in \cite{Breuil} (see also \cite{BP}) gives a characterization of the cases where two weights can have a non-split extension. It turns out that most of the extensions in the above diagram are split. Identifying the possibly non-split extensions between the weights, the possibility of the following four principal series inside \(V_r/V_r^{**}\) emerges:

\[
\begin{tikzpicture}[>=stealth, x=4cm, y=-1.5cm]
  \node (A) at (1,0.5) {\text{\shortstack{$(p-a_0,\,p-a_1)\otimes D^{a}$}}};
  \node (B) at (0,1.5) {\text{\shortstack{${(a_0-2,p-1-a_1) \otimes D^{(1+a_1)p+1}}$}}};
  \node (C) at (2,1.5) {\text{\shortstack{$(p-1-a_0,a_1-2)\otimes D^{1+a_0+p}$}}};
  \node (D) at (1,2.5)   {\text{$(a_0-1,a_1-1)\otimes D^{p+1}.$}};

  \draw[-] (A) -- (B);
  \draw[-] (A) -- (C);
  \draw[-] (B) -- (D);
  \draw[-] (C) -- (D);
\end{tikzpicture}
\]

\[
\begin{tikzpicture}[>=stealth, x=4cm, y=-1.5cm]
  \node (A) at (1,.5) {\text{\shortstack{$(p-a_0,\,p{-}2{-}a_1)\otimes D^{a+p}$}}};
  \node (B) at (0,1.5) {\text{\shortstack{${(a_0-2,p-3-a_1) \otimes D^{(2+a_1)p+1}}$}}};
  \node (C) at (2,1.5) {\text{\shortstack{$(p-1-a_0,\,a_1)\otimes D^{1+a_0}$}}};
  \node (D) at (1,2.5)   {\text{$(a_0-1,a_1+1)\otimes D$}};
  \draw[-] (A) -- (B);
  \draw[-] (A) -- (C);
  \draw[-] (B) -- (D);
  \draw[-] (C) -- (D);
\end{tikzpicture}
\]

\[
\begin{tikzpicture}[>=stealth, x=4cm, y=-1.5cm]
  \node (A) at (1,.5) {\text{\shortstack{$(p-2-a_0,\,p{-}a_1)\otimes D^{a+1}$}}};
  \node (B) at (0,1.5) {\text{\shortstack{${(a_0,p-1-a_1) \otimes D^{(1+a_1)p}}$}}};
  \node (C) at (2,1.5) {\text{\shortstack{$(p-3-a_0,a_1-2)\otimes D^{2+a_0+p}$}}};
  \node (D) at (1,2.5)   {\text{$(a_0+1,a_1-1)\otimes D^p$}};

  \draw[-] (A) -- (B);
  \draw[-] (A) -- (C);
  \draw[-] (B) -- (D);
  \draw[-] (C) -- (D);
\end{tikzpicture}
\]

\[
\begin{tikzpicture}[>=stealth, x=4cm, y=-1.5cm]
  \node (A) at (1,.5) {\text{\shortstack{$(p-2-a_0,\,p{-}2{-}a_1)\otimes D^{a+p+1}$}}};
  \node (B) at (0,1.5) {\text{\shortstack{${(a_0,p-3-a_1) \otimes D^{(2+a_1)p}}$}}};
  \node (C) at (2,1.5) {\text{\shortstack{$(p-3-a_0,a_1)\otimes D^{2+a_0}$}}};
  \node (D) at (1,2.5)   {\text{$(a_0+1,a_1+1)$}};

  \draw[-] (A) -- (B);
  \draw[-] (A) -- (C);
  \draw[-] (B) -- (D);
  \draw[-] (C) -- (D);
\end{tikzpicture}
\]

At this stage it is not clear how these four possible principal series are arranged in \(V_r/V_r^{**}\). For \(f=1\), \(V_r/V_r^*\) is an extension between two weights and \(V_r/V_r^{**}\) is an extension between two principal series. One might expect something similar to happen for \(f=2\). Since for \(f=2\) the quotient 
\(V_r/V_r^*\) is a diamond shaped diagram of four weights, one might expect \(V_r/V_r^{**}\) to also be a diamond shaped diagram with the four weights replaced by four principal series, as in the following conjectural diagram:
\[\begin{tikzcd}
	&&& {[4,1]} \\
	&& {[2,3]} && {[3,2]} \\
	&&& {[1,4]} \\
	& {[4,3]} &&&& {[4,2]} \\
	{[2,1]} && {[3,4]} && {[2,4]} && {[3,1]} \\
	& {[1,2]} &&&& {[1,3]} \\
	&&& {[4,4]} \\
	&& {[2,2]} && {[3,3]} \\
&&& {[1,1].}
	\arrow[no head, from=1-4, to=2-3]
	\arrow[no head, from=1-4, to=2-5]
	\arrow[no head, from=2-3, to=3-4]
	\arrow[no head, from=2-5, to=3-4]
	\arrow[no head, from=4-2, to=5-1]
	\arrow[no head, from=4-2, to=5-3]
	\arrow[no head, from=4-6, to=5-5]
	\arrow[no head, from=4-6, to=5-7]
	\arrow[no head, from=5-1, to=6-2]
	\arrow[no head, from=5-3, to=1-4]
	\arrow[no head, from=5-3, to=6-2]
	\arrow[no head, from=5-5, to=1-4]
	\arrow[no head, from=5-5, to=6-6]
	\arrow[no head, from=5-7, to=6-6]
	\arrow[no head, from=6-2, to=2-3]
	\arrow[no head, from=6-6, to=2-5]
	\arrow[no head, from=7-4, to=8-3]
	\arrow[no head, from=7-4, to=8-5]
	\arrow[no head, from=8-3, to=4-2]
	\arrow[no head, from=8-3, to=9-4]
	\arrow[no head, from=8-5, to=4-6]
	\arrow[no head, from=8-5, to=9-4]
	\arrow[no head, from=9-4, to=5-3]
	\arrow[no head, from=9-4, to=5-5]
\end{tikzcd}\]
Here we use some new notation: $[i,j]$ is the weight in the $i$-th row and $j$-th column of the 
following table:
{\tiny
\begin{center}
\resizebox{1\textwidth}{!}{
\begin{tabular}{ |c|c|c|c|c| } 
 \hline
 & 1 & 2 & 3 & 4 \\ \hline
 1 & $\begin{array}{c}(a_0{-}1,\ a_1{-}1)\\ \otimes D^{p+1}\end{array}$ &
     $\begin{array}{c}(a_0{-}1,\ a_1{+}1)\\ \otimes D^{1}\end{array}$  &
     $\begin{array}{c}(a_0{+}1,\ a_1{-}1)\\ \otimes D^{p}\end{array}$ &
     $(a_0{+}1,\ a_1{+}1)$ \\ 
 \hline
 2 & $\begin{array}{c}(a_0{-}2,\ p{-}3{-}a_1)\\ \otimes D^{(2{+}a_1)p+1}\end{array}$ &
     $\begin{array}{c}(a_0{-}2,\ p{-}1{-}a_1)\\ \otimes D^{(1{+}a_1)p+1}\end{array}$  &
     $\begin{array}{c}(a_0,\ p{-}3{-}a_1)\\ \otimes D^{(2{+}a_1)p}\end{array}$ &
     $\begin{array}{c}(a_0,\ p{-}1{-}a_1)\\ \otimes D^{(1{+}a_1)p}\end{array}$ \\ 
 \hline
 3 & $\begin{array}{c}(p{-}3{-}a_0,\ a_1{-}2)\\ \otimes D^{2+a_0+p}\end{array}$ &
     $\begin{array}{c}(p{-}3{-}a_0,\ a_1)\\ \otimes D^{2+a_0}\end{array}$  &
     $\begin{array}{c}(p{-}1{-}a_0,\ a_1{-}2)\\ \otimes D^{1+a_0+p}\end{array}$ &
     $\begin{array}{c}(p{-}1{-}a_0,\ a_1)\\ \otimes D^{1+a_0}\end{array}$ \\ 
 \hline
 4 & $\begin{array}{c}(p{-}2{-}a_0,\ p{-}2{-}a_1)\\ \otimes D^{a{+}p{+}1}\end{array}$ &
     $\begin{array}{c}(p{-}2{-}a_0,\ p{-}a_1)\\ \otimes D^{a{+}1}\end{array}$  &
     $\begin{array}{c}(p{-}a_0,\ p{-}2{-}a_1)\\ \otimes D^{a{+}p}\end{array}$ &
     $\begin{array}{c}(p{-}a_0,\ p{-}a_1)\\ \otimes D^{a}\end{array}$ \\ 
 \hline
\end{tabular}
}
\end{center}
}
\vspace{.2cm}

However, it is not clear how to check that the above arrangement of principal series representations in $V_r/V_r^{**}$ for $f = 2$  is correct with the present tools. In the next subsections, we use another method to study the structure of $V_r/V_r^{(m+1)}$ for general $m \geq 0$ and $f \geq 1$ which uses the theta filtration. 

\subsection{Theta Filtration for $f=2$}

It is illuminating to do this first for $V_r/V_r^{**}$ and $f=2.$ Consider the lattice of submodules
of $V_r/V_r^{**}$ given in the picture:

\[
\begin{tikzpicture}[>=stealth, x=4cm, y=-1.5cm]
  \node (A) at (1,0) {\text{\shortstack{$\frac{V_r^*}{V_r^{**}}$}}};
  \node (B) at (0,1.5) {\text{\shortstack{$\frac{\langle \theta_0\rangle+V_r^{**}}{V_r^{**}}$}}};
  \node (C) at (2,1.5) {\text{\shortstack{$\frac{\langle \theta_1\rangle+V_r^{**}}{V_r^{**}}$}}};
  \node (D) at (1,3)   {\text{$\frac{\langle \theta_0\theta_1\rangle+V_r^{**}}{V_r^{**}}$}};
  \node (Top) at (1,4) {\text{0}.};
  \node (Socle) at (1,-1) {\text{$\frac{V_r}{V_r^{**}}$}};

  \draw[-] (A) -- (B);
  \draw[-] (A) -- (C);
  \draw[-] (B) -- (D);
  \draw[-] (C) -- (D);

  \draw[-] (D) -- (Top);
  \draw[-] (Socle) -- (A);
\end{tikzpicture}
\]
We show that the sub-quotients in the above diagram consist of principal series arranged in a diamond shaped diagram. The top sub-quotient is $\frac{V_r}{V_r^{**}}/\frac{V_r^*}{V_r^{**}}\cong \frac{V_r}{V_r^*},$ which is a principal series by \cite{GhateJana+2025+1503+1543}. Now we study the quotient $$\frac{\frac{V_r^*}{V_r^{**}}}{\frac{\langle \theta_0\rangle +V_r^{**}}{V_r^{**}}}\cong \frac{\langle\theta_0,\theta_1\rangle}{\langle \theta_0,\theta_1^2\rangle}\cong \frac{\langle \theta_1\rangle}{\langle\theta_0\theta_1,\theta_1^2\rangle}.$$
The second isomorphism follows from the second isomorphism theorem and a small check using the fact that $\theta_1 \nmid \theta_0$. We claim that the rightmost quotient is a principal series.
Let $r'=(r_0-p,r_1-1).$  Consider the map $$V_{r'}\otimes \mathrm{det}^p\rightarrow \frac{\langle \theta_1\rangle}{\langle \theta_0\theta_1,\theta_1^2\rangle}$$
given by multiplication by $\theta_1 = x_1y_0^p -y_1 x_0^p$.  This map is $\mathrm{GL}_2(\mathbb F_q)$-equivariant and surjective with 
\begin{eqnarray*}
    \mathrm{kernel}&=&\{P:P\theta_1 =A\theta_0\theta_1+B\theta_1^2\}\\
    &=&\{P:P =A\theta_0+B\theta_1\}\\
    &=&\langle\theta_0,\theta_1\rangle.\end{eqnarray*}
    Thus we have $\frac{V_{r'}}{V_{r'}^*}\otimes \mathrm{det}^p\cong \frac{\langle \theta_1\rangle}{\langle \theta_0\theta_1,\theta_1^2\rangle}$. 
    Since the quotient on the left is a principal series by \cite{GhateJana+2025+1503+1543}, we are done. Now we study the quotient $$\frac{\frac{\langle\theta_0\rangle+V_r^{**}}{V_r^{**}}}{\frac{\langle\theta_0\theta_1\rangle+V_r^{**}}{V_r^{**}}}\cong\frac{\langle\theta_0\rangle+V_r^{**}}{\langle\theta_0\theta_1\rangle+V_r^{**}}\cong \frac{\langle \theta_0\rangle}{\langle \theta_0\theta_1,\theta_0^2\rangle}.$$
Now let $r'' =(r_0-1,r_1-p).$ Consider the map $$V_{r''}\otimes \mathrm{det}\rightarrow \frac{\langle \theta_0\rangle}{\langle \theta_0\theta_1,\theta_0^2\rangle}$$
given by multiplication by $\theta_0 =  x_0y_1^p -y_0 x_1^p$. The map is $\mathrm{GL}_2(\mathbb F_q)$-equivariant and surjective with 
\begin{eqnarray*}
    \mathrm{kernel}&=&\{P:P\theta_0 =A\theta_0\theta_1+B\theta_0^2\}\\
    &=&\{P:P =A\theta_1+B\theta_0\}\\
    &=&\langle\theta_0,\theta_1\rangle.
\end{eqnarray*}
Thus $\frac{\langle \theta_0\rangle}{\langle \theta_0\theta_1,\theta_0^2\rangle}\cong \frac{V_{r''}}{V_{r''}^{*}}\otimes \mathrm{det}.$ Again by \cite{GhateJana+2025+1503+1543}, the quotient is a principal series. Finally, we study the quotient $\frac{\langle \theta_0\theta_1\rangle+V_r^{**}}{V_r^{**}}\cong \frac{\langle \theta_0\theta_1\rangle}{\langle \theta_0^2\theta_1,\theta_0\theta_1^2\rangle}.$ Consider the map $$V_{r'''}\otimes \mathrm{det}^{1+p}\rightarrow\frac{\langle\theta_0\theta_1\rangle}{\langle \theta_0^2\theta_1,\theta_0\theta_1^2\rangle}$$ given by multiplication by $\theta_0\theta_1$, where $r'''=(r_0-p-1,r_1-p-1).$ The twist by $\mathrm{det}^{1+p}$ makes the map $\mathrm{GL}_2(\mathbb F_q)$-equivariant. Clearly the map is surjective with 
\begin{eqnarray*}
    \mathrm{kernel}&=&\{P:P\theta_0\theta_1 =A\theta_0^2\theta_1+B\theta_0\theta_1^2\}\\
    &=&\{P:P =A\theta_0+B\theta_1\}\\
    &=&\langle\theta_0,\theta_1\rangle.
\end{eqnarray*}
Thus we have $\frac{\langle\theta_0\theta_1\rangle}{\langle \theta_0^2\theta_1,\theta_0\theta_1^2\rangle}\cong\frac{V_{r'''}}{V_{r'''}^*}\otimes \mathrm{det}^{1+p}$, which is a principal series, by \cite{GhateJana+2025+1503+1543}.  
This shows that $V_r/V_r^{**}$ has a filtration of four submodules (given by the left side of the diamond above) with each sub-quotient a principal series. 

Similarly, one can prove that the sub-quotients on the right are principal series as well. Thus, the principal series in $V_r/V_r^{**}$ are indeed arranged in a diamond shaped diagram when $f = 2$. In the following subsections, we study the structure of \(V_r/V_r^{(m+1)}\) for arbitrary \(m\) and \(f\), generalizing this argument.

\subsection{Some isomorphisms}
In this subsection we prove some isomorphisms that will be used in the proof of the main theorem. In the following, isomorphism means isomorphism as representations of GL\(_2(\mathbb{F}_{q})\). Let \(r=\sum_{i=0}^{f-1}r_ip^i\) and \(V_r=\otimes_{i=0}^{f-1}(\mathrm{Sym}^{r_i} {\mathbb F}_q^2 \circ \text{Fr}^i)\).
For any polynomial \(f \in \mathbb{F}_q[x_0,y_0,...,x_{f-1},y_{f-1}]\), let \(\langle f \rangle\) denote the submodule of \(V_r\) consisting of all the polynomials in \(V_r\) which are divisible by \(f\). If there are multiple polynomials \(f_1,...,f_k\), then \(\langle f_1,...,f_k \rangle:=\langle f_1\rangle + \langle f_2\rangle + ... + \langle f_k\rangle\). Also, for any submodule \(V\subset V_r\), let \[[V]:=\frac{V+V_r^{(m+1)}}{V_r^{(m+1)}}\] 
denote the submodule of $V_r/V_r^{(m+1)}$ generated by $V$.

\begin{lemma} \label{iso3} For any submodules \(W\subset V\subset V_r\)
   $$ \frac{[V]}{[W]}\cong \frac{V}{W+ V\cap V_r^{(m+1)}}.$$
    \end{lemma}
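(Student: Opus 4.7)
The plan is to deduce this from standard isomorphism theorems combined with the modular (Dedekind) law for submodules. Writing $U = V_r^{(m+1)}$ for brevity, we have $[V] = (V+U)/U$ and $[W] = (W+U)/U$, with $[W] \subset [V]$ since $W \subset V$.

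First, I would apply the third isomorphism theorem to get
\[
\frac{[V]}{[W]} \;=\; \frac{(V+U)/U}{(W+U)/U} \;\cong\; \frac{V+U}{W+U}.
\]
Next, because $W \subset V$, we can rewrite $V+U = V + (W+U)$. Applying the second isomorphism theorem to the submodules $V$ and $W+U$ of $V_r$ yields
\[
\frac{V+U}{W+U} \;=\; \frac{V + (W+U)}{W+U} \;\cong\; \frac{V}{V \cap (W+U)}.
\]

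The remaining task is to identify $V \cap (W+U)$ with $W + (V \cap U)$. The inclusion $\supset$ is immediate: $W \subset V \cap (W+U)$ since $W \subset V$ and $W \subset W+U$, and $V \cap U \subset V \cap (W+U)$ trivially. For the reverse inclusion, take $v \in V \cap (W+U)$ and write $v = w+u$ with $w \in W$, $u \in U$. Since $w \in W \subset V$ and $v \in V$, we have $u = v - w \in V$, so $u \in V \cap U$, and therefore $v = w + u \in W + (V \cap U)$. This is just the modular law, which holds since $W \subset V$. Combining the three displayed isomorphisms proves the lemma. There is no real obstacle here; it is a pure diagram-chase and will be a one-line proof in the paper.
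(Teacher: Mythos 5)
Your proof is correct and follows essentially the same route as the paper: both reduce $[V]/[W]$ to $(V+U)/(W+U)$ and then identify the kernel of the natural map from $V$, which is exactly your modular-law computation $V\cap(W+U)=W+(V\cap U)$ using $W\subset V$. The paper simply packages the second isomorphism theorem and the modular law into one explicit kernel calculation.
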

\begin{proof}By definition of $[V]$ and $[W]$, we have 
    $$\frac{[V]}{[W]}\cong \frac{V+V_r^{(m+1)}}{W+V_r^{(m+1)}}.$$
    Define the map  $V \rightarrow \frac{V+V_r^{(m+1)}}{W+V_r^{(m+1)}}$ by $v \mapsto (v +V_r^{(m+1)})+W +V_r^{(m+1)}.$ Clearly this map is $\mathrm{GL}_2({\mathbb F}_q)$-equivariant and surjective. Moreover, it has
    \begin{eqnarray*}
        \mathrm{kernel} &=&\{v\in V \mid v +V_r^{(m+1)} \in W+ V_r^{(m+1)}  \}\\
        &=&\{v\in V \mid v =w +x, w \in W, x \in  V_r^{(m+1)}\}\\
        &=&\{v\in V \mid v =w +x, w \in W, x \in  V_r^{(m+1)}\cap V\} \quad \text{since } W \subset V\\
        &=&W+V\cap V_r^{(m+1)}. 
    \end{eqnarray*}
    Hence the lemma follows.
\end{proof}

The main tool we use in the proof of Proposition~\ref{iso1} below is that $V_r/V_r^{*}=V_r/V_r^{(1)}$
is a principal series \cite{sandra}, \cite{GhateJana+2025+1503+1543}. 
We will later show that each of the sub-quotients in the theta filtration is isomorphic to a representation of the form in Proposition \ref{iso1}. 

\begin{proposition} For any \(j_i\ge0 \text{ and } i=0,1,...,f-1\),
    \label{iso1} 
    \[
        \frac{\Big\langle \prod_{l=0}^{f-1}\theta_l^{j_l} \Big\rangle}
        {\Big\langle \cup_{i=0}^{f-1}\left\{\left(\prod_{l=0;\ l\ne i}^{f-1} \theta_l^{j_l}\right)\left(\theta_i^{j_i+1}\right) \right\} \Big\rangle}
        \cong \mathrm{ind}_{B(\mathbb F_q)}^{G(\mathbb F_q)}\left(\mathrm{det}^{S_P}\otimes d^{r'}\right)
    \]
    is a principal series, 
    where \(r'=\sum_{i=0}^{f-1}r'_ip^i\) with  \(r'_i=r_i-j_i-pj_{i+1} \geq q\), and \(S_P= \sum_{l=0}^{f-1}j_l\ p^{l}\).  
\end{proposition}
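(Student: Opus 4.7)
The plan is to mimic the $f=2$ argument sketched above for each of the four sub-quotients. I would construct a $\mathrm{GL}_2(\mathbb{F}_q)$-equivariant isomorphism between the quotient in question and $(V_{r'}/V_{r'}^*) \otimes \det^{S_P}$; once this is in hand, the Ghate–Jana result \cite{GhateJana+2025+1503+1543} — applicable thanks to $r_i' \geq q$ — identifies $V_{r'}/V_{r'}^*$ with the principal series $\mathrm{ind}_B^G d^{r'}$, and the standard projection formula $\mathrm{ind}_B^G(\chi) \otimes V \cong \mathrm{ind}_B^G(\chi \otimes V|_B)$ absorbs the $\det^{S_P}$-twist into the induction to yield $\mathrm{ind}_B^G(\det^{S_P} \otimes d^{r'})$.

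The first step is to record that each twisted Dickson polynomial $\theta_l = x_l y_{l-1}^p - y_l x_{l-1}^p$ is a $\mathrm{GL}_2(\mathbb{F}_q)$-semi-invariant of weight $\det^{p^l}$. A direct computation gives
$$\alpha \cdot \theta_l = (a^{p^l} x_l + c^{p^l} y_l)(b^{p^l} x_{l-1}^p + d^{p^l} y_{l-1}^p) - (b^{p^l} x_l + d^{p^l} y_l)(a^{p^l} x_{l-1}^p + c^{p^l} y_{l-1}^p) = \det(\alpha)^{p^l}\, \theta_l,$$
using that the Frobenius-twisted action on $(x_{l-1}^p, y_{l-1}^p)$ is itself $\det^{p^l}$-twisted. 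Consequently $M := \prod_{l=0}^{f-1} \theta_l^{j_l}$ transforms by $\det^{S_P}$, so the multiplication-by-$M$ map
$$\Phi : V_{r'} \otimes \det^{S_P} \longrightarrow V_r, \qquad P \longmapsto P M,$$
is $\mathrm{GL}_2(\mathbb{F}_q)$-equivariant.

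Next I would identify the numerator and denominator of the target as submodules of $\langle M\rangle$. Note that $\bigl(\prod_{l \neq i}\theta_l^{j_l}\bigr)\theta_i^{j_i+1} = M\theta_i$, so the denominator is simply $\langle M\theta_0, M\theta_1, \ldots, M\theta_{f-1}\rangle$. Composing $\Phi$ with the projection onto the quotient, I claim $\bar\Phi : V_{r'} \otimes \det^{S_P} \to \langle M\rangle/\langle M\theta_0, \ldots, M\theta_{f-1}\rangle$ is surjective with kernel $V_{r'}^* \otimes \det^{S_P}$. Surjectivity is immediate: any multihomogeneous polynomial in $V_r$ divisible by $M$ factors uniquely as $PM$ with $P \in V_{r'}$ by matching multidegrees (the relation $r_i = r_i' + j_i + pj_{i+1}$ pins down the multidegree of $P$). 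For the kernel, if $PM = \sum_i A_i M\theta_i$ in $V_r$, then cancelling $M$ inside the integral domain $\mathbb{F}_q[x_0, y_0, \ldots, x_{f-1}, y_{f-1}]$ yields $P = \sum_i A_i \theta_i$; matching multidegrees forces each $A_i$ to lie in the correct symmetric-power space so that $A_i \theta_i \in V_{r'}$ (the hypothesis $r_i' \geq q \geq p$ gives room for the factor $\theta_i$). Hence $\ker\bar\Phi = V_{r'}^*\otimes \det^{S_P}$, and
$$\frac{\langle M\rangle}{\langle M\theta_0,\ldots,M\theta_{f-1}\rangle} \cong \frac{V_{r'}}{V_{r'}^*} \otimes \det^{S_P},$$
from which the proposition follows as described in the first paragraph.

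\textbf{Main obstacle.} The argument is essentially bookkeeping once the equivariance of $\Phi$ is established. The only point requiring care is the cancellation of $M$ in the kernel computation, which is legitimate because the identity $PM = \sum_i A_i M\theta_i$ is an identity of polynomials, and cancellation takes place inside the polynomial ring (an integral domain) rather than inside the $\mathrm{GL}_2(\mathbb{F}_q)$-module $V_r$; one then verifies a posteriori that the factored expression $P = \sum_i A_i\theta_i$ lies in $V_{r'}$ by a routine multidegree check.
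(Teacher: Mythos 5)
Your proposal is correct and is essentially the paper's own proof: the same multiplication-by-$\prod_l \theta_l^{j_l}$ map twisted by $\det^{S_P}$, the same semi-invariance computation for $\theta_l$, the same kernel identification by cancelling the product in the polynomial ring to get $\ker = V_{r'}^*$, and the same final appeal to the Ghate--Jana theorem that $V_{r'}/V_{r'}^*$ is the principal series $\mathrm{ind}_{B(\mathbb F_q)}^{G(\mathbb F_q)} d^{r'}$. Your extra remarks on the multidegree bookkeeping and on absorbing the determinant twist into the induction only make explicit what the paper leaves implicit.
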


\begin{proof}
First we look at the action of a matrix $\left(\begin{smallmatrix}
    a&b\\c&d
\end{smallmatrix}\right)$  in GL$_2(\mathbb F_q)$ on $\theta_i=x_iy_{i-1}^p-y_ix_{i-1}^p.$ Then working mod $p$ we have
\begin{eqnarray*}
\left(\begin{smallmatrix}
    a&b\\c&d
\end{smallmatrix}\right)\cdot \theta_i&=&  (a^{p^i}x_i+c^{p^i}y_i)(b^{p^{i-1}}x_{i-1}+d^{p^{i-1}}y_{i-1})^p-(b^{p^i}x_i+d^{p^i}y_i)(a^{p^{i-1}}x_{i-1}+c^{p^{i-1}}y_{i-1})^p \\
&=&(a^{p^i}x_i+c^{p^i}y_i)(b^{p^{i}}x^p_{i-1}+d^{p^{i}}y^p_{i-1})-(b^{p^i}x_i+d^{p^i}y_i)(a^{p^{i}}x^p_{i-1}+c^{p^{i}}y^p_{i-1})\\
&=&((ad)^{p^i}-(bc)^{p^i})x_iy_{i-1}^p-((ad)^{p^i}-(bc)^{p^i})y_ix_{i-1}^p\\
&=&(ad-bc)^{p^i}\theta_i.
\end{eqnarray*}
Let \(P=\prod_{l=0}^{f-1}\theta_l^{j_l}\).
    Since $\mathrm{GL}_2(\Fq)$ acts on $\theta_l$ by $\mathrm{det}^{p^l}$, it acts on $P$ by the character $\mathrm{det}^{S_P}$.
    
Let
    \begin{align*}
        &V':=\Bigg\langle \bigcup_{i=0}^{f-1}\left\{\left(\prod_{l=0;\ l\ne i}^{f-1} \theta_l^{j_l}\right)\left(\theta_i^{j_i+1}\right) \right\} \Bigg\rangle 
        \subset V_r.
    \end{align*} 
Let \(
        V_{r'}:=\otimes_{i=0}^{f-1}V_{r'_i}\circ \text{Fr}^i\). 
    Define the map \(\psi:\det^{S_P}\otimes V_{r'}\to V_r/V'\),
    \[
        \psi(Q)=PQ+V'.
    \]
    Notice that twisting by the character $\mathrm{det} ^{S_P}$ makes \(\psi\) a GL$_2(\mathbb F_q)$-equivariant map. Clearly, the image of \(\psi\) is \(\langle P\rangle/V' \subset V_r/V'\). Now we compute the kernel of $\psi.$ Our claim is that kernel of \(\psi\) is \( V_{r'}^* =\langle \theta_0,...,\theta_{f-1} \rangle \subset V_{r'}\).
    Clearly $V_{r'}^* \subset \ker (\psi).$ Let $Q \in \ker (\psi).$ We have 
    $$QP= \sum\limits_{i=0}^{f-1}A_i\left(\prod_{l=0;\ l\ne i}^{f-1} \theta_l^{j_l}\right)\left(\theta_i^{j_i+1}\right). $$
Dividing both sides of this equation by $P$, we obtain
$$Q=\sum\limits_{i=0}^{k-1}A_i\theta_i\in V_{r'}^*.$$
Hence $\ker(\psi)=V_{r'}^*.$
Thus we have
    \[
        \frac{\langle P \rangle}{V'}\cong \mathrm{det}^{S_P} \otimes\frac{V_{r'}}{V_{r'}^*}.    
    \]    
    By Theorem 1.3 in \cite{GhateJana+2025+1503+1543}, the right hand side is isomorphic to \(\text{ind}_{B(\mathbb{F}_q)}^{G(\mathbb{F}_q)}\left( \text{det}^{S_P}\otimes d^{r'}\right)\).
\end{proof}

\subsection{Main Theorem}
\begin{definition}
We say that a representation \(V\) is decomposable into principal series if it is possible to write down a filtration
of submodules such that the successive sub-quotients are principal series.
\end{definition}

The main result in this section is that \(V_r/V_r^{(m+1)}\) decomposes into principal series. We will use induction on $m$ to prove this. We will assume that $V_r/V_r^{(m)}$ decomposes into principal series and prove that $V_r^{(m)}/V_r^{(m+1)}$ is decomposable into principal series. This will show that $V_r/V_r^{(m+1)}$ is decomposable into principal series and complete the inductive step. 

As in the case of $m = 1$ and $f = 2$, we start with the lattice of submodules in $V_r/V_r^{(m+1)}$ generated by all products of all powers of $\theta_i$.  We call this the {\it theta filtration} on $V_r/V_r^{(m+1)}$. It forms a hypercube graph. For instance,
the theta filtration on 
$V_r/V_r^{(m+1)}$ for \(m=2\) and \(f=3\) is given by 
the following picture where as before we write $[V]=\frac{V+V_r^{(m+1)}}{ V_r^{(m+1)}}$ for a submodule
$V \subset V_r$.

\begin{center}\resizebox{.9\textwidth}{!}{
\tdplotsetmaincoords{70}{110}
\begin{tikzpicture}[tdplot_main_coords, scale=3, xshift=-6cm]
\coordinate (A0) at (0,0,0);
\coordinate (A1) at (4,0,0);
\coordinate (A2) at (4,4,0);
\coordinate (A3) at (0,4,0);
\coordinate (A4) at (0,0,4);
\coordinate (A5) at (4,0,4);
\coordinate (A6) at (4,4,4);
\coordinate (A7) at (0,4,4);

\draw[dashed](A0)--(A1);
\draw[dashed](A3)--(A0);
\draw[thick](A1)--(A2);
\draw[thick](A2)--(A3);
\draw[dashed](A0)--(A4);
\draw[thick](A4)--(A5);
\draw[thick](A5)--(A1);

\draw[thick] (A4)--(A7)--(A6)--(A5);

\draw[thick] (A7)--(A3);
\draw[thick] (A2)--(A6);

\coordinate (M01) at ($(A0)!0.5!(A1)$);
\coordinate (M12) at ($(A1)!0.5!(A2)$);
\coordinate (M23) at ($(A2)!0.5!(A3)$);
\coordinate (M30) at ($(A3)!0.5!(A0)$);
\coordinate (M45) at ($(A4)!0.5!(A5)$);
\coordinate (M56) at ($(A5)!0.5!(A6)$);
\coordinate (M67) at ($(A6)!0.5!(A7)$);
\coordinate (M74) at ($(A7)!0.5!(A4)$);
\coordinate (M04) at ($(A0)!0.5!(A4)$);
\coordinate (M15) at ($(A1)!0.5!(A5)$);
\coordinate (M26) at ($(A2)!0.5!(A6)$);
\coordinate (M37) at ($(A3)!0.5!(A7)$);

\coordinate (FBot) at (2,2,0);
\coordinate (FTop) at (2,2,4);
\coordinate (FLeft) at (0,2,2);
\coordinate (FRight) at (4,2,2);
\coordinate (FFront) at (2,0,2);
\coordinate (FBack) at (2,4,2);

\coordinate (C) at (2,2,2);

\draw[dashed] (2,0,0) -- (2,4,0);
\draw[dashed] (0,2,0) -- (4,2,0);

\draw[thick] (2,0,4) -- (2,4,4);
\draw[thick] (0,2,4) -- (4,2,4);

\draw[dashed] (0,2,0) -- (0,2,4);
\draw[dashed] (0,0,2) -- (0,4,2);

\draw[thick] (4,2,0) -- (4,2,4);
\draw[thick] (4,0,2) -- (4,4,2);

\draw[dashed] (2,0,0) -- (2,0,4);
\draw[dashed] (0,0,2) -- (4,0,2);

\draw[thick] (2,4,0) -- (2,4,4);
\draw[thick] (0,4,2) -- (4,4,2);

\draw[dashed] (0,0,2) -- (4,0,2) -- (4,4,2) -- (0,4,2) -- cycle; 
\draw[dashed] (2,0,2) -- (2,4,2); 
\draw[dashed] (0,2,2) -- (4,2,2); 

\draw[dashed] (2,0,0) -- (2,4,0) -- (2,4,4) -- (2,0,4) -- cycle; 
\draw[dashed] (2,2,0) -- (2,2,4); 
\draw[dashed] (2,0,2) -- (2,4,2); 


\foreach \point/\name/\pos in {
A0/$[\theta_0^2\theta_2^2]$/left,
A1/$[\theta_0^2\theta_1^2\theta_2^2]$/below,
A2/$[\theta_1^2\theta_2^2]$/right,
  A3/$[\theta_2^2]$/right,
  A4/$[\theta_0^2]$/above,
A5/$[\theta_0^2\theta_1^2]$/left,
A6/$[\theta_1^2]$/right,
A7/$[V_r]$/right,
M01/$[\theta_0^2\theta_1\theta_2^2]$/above left,
M12/$[\theta_0\theta_1^2\theta_2^2]$/below,
M23/$[\theta_1\theta_2^2]$/right,
M30/$[\theta_0\theta_2^2]$/above right,
M45/$[\theta_0^2\theta_1]$/left,
M56/$[\theta_0\theta_1^2]$/above left,
M67/$[\theta_1]$/right,
M74/$[\theta_0]$/above,
M04/$[\theta_0^2\theta_2]$/left,
M15/$[\theta_0^2\theta_1^2\theta_2]$/left,
M26/$[\theta_1^2\theta_2]$/right,
M37/$[\theta_2]$/right,
FBot/$[\theta_0\theta_1\theta_2^2]$/above left,
FTop/$[\theta_0\theta_1]$/above left,
FLeft/$[\theta_0\theta_2]$/above left,
FRight/$[\theta_0\theta_1^2\theta_2]$/below right,
FFront/$[\theta_0^2\theta_1\theta_2]$/above left,
FBack/$[\theta_1\theta_2]$/right,
C/$[\theta_0\theta_1\theta_2]$/above left}{ \filldraw[blue] (\point) circle (0.4pt);  \node[\pos] at (\point) {\name}; }\end{tikzpicture}}
\end{center}
  
\noindent As the inductive step requires one to prove that $V_r^{(m)}/V_r^{(m+1)}$ is decomposable into principal series, it is sufficient to study the theta filtration on $V_r^{(m)}/V_r^{(m+1)}.$ 

For instance, the theta filtration on $V_r^{(m)}/V_r^{(m+1)}$ for $m=2$ and $f=3$ can be expressed by the following diagram. This diagram is obtained from the 
previous one by taking paths along the above diagram 
(starting from $[\theta_0^2]$, $[\theta_1^2]$ and $[\theta_2^2]$ and heading in a positive direction towards $[\theta_0^2\theta_1^2\theta_2^2]$).

\[\begin{tikzcd}
	&&& {[V_r^{(2)}]} \\
	& {[\theta_0^2]} && {[\theta_1^2]} && {[\theta_2^2]} \\
	{[\theta_0^2\theta_1]} & {[\theta_0^2\theta_2]} & {[\theta_0\theta_1^2]} && {[\theta_1^2\theta_2]} & {[\theta_0\theta_2^2]} & {[\theta_1\theta_2^2]} \\
	{[\theta_0^2\theta_1^2]} & {[\theta_0^2\theta_1\theta_2]} & {[\theta_0^2\theta_2^2]} && {[\theta_0\theta_1^2\theta_2]} & {[\theta_0\theta_1\theta_2^2]} & {[\theta_1^2\theta_2^2]} \\
	& {[\theta_0^2\theta_1^2\theta_2]} && {[\theta_0^2\theta_1\theta_2^2]} && {[\theta_0\theta_1^2\theta_2^2]} \\
	&&& {[\theta_0^2\theta_1^2\theta_2^2]} \\
	&&& {[0].}
	\arrow[no head, from=1-4, to=2-2]
	\arrow[no head, from=1-4, to=2-4]
	\arrow[no head, from=1-4, to=2-6]
	\arrow[no head, from=2-2, to=3-1]
	\arrow[no head, from=2-2, to=3-2]
	\arrow[no head, from=2-4, to=3-3]
	\arrow[no head, from=2-4, to=3-5]
	\arrow[no head, from=2-6, to=3-6]
	\arrow[no head, from=2-6, to=3-7]
	\arrow[no head, from=3-1, to=4-1]
	\arrow[no head, from=3-1, to=4-2]
	\arrow[no head, from=3-2, to=4-2]
	\arrow[no head, from=3-2, to=4-3]
	\arrow[no head, from=3-3, to=4-1]
	\arrow[no head, from=3-3, to=4-5]
	\arrow[no head, from=3-5, to=4-5]
	\arrow[no head, from=3-5, to=4-7]
	\arrow[no head, from=3-6, to=4-3]
	\arrow[no head, from=3-6, to=4-6]
	\arrow[no head, from=3-7, to=4-6]
	\arrow[no head, from=3-7, to=4-7]
	\arrow[no head, from=4-1, to=5-2]
	\arrow[no head, from=4-2, to=5-2]
	\arrow[no head, from=4-2, to=5-4]
	\arrow[no head, from=4-3, to=5-4]
	\arrow[no head, from=4-5, to=5-2]
	\arrow[no head, from=4-5, to=5-6]
	\arrow[no head, from=4-6, to=5-4]
	\arrow[no head, from=4-6, to=5-6]
	\arrow[no head, from=4-7, to=5-6]
	\arrow[no head, from=5-2, to=6-4]
	\arrow[no head, from=5-4, to=6-4]
	\arrow[no head, from=5-6, to=6-4]
	\arrow[no head, from=6-4, to=7-4]
\end{tikzcd}\]

 The theta filtration  on \(V_r^{(m)}/V_r^{(m+1)}\) can be arranged in rows of modules generated by appropriate products of powers of the polynomials $\theta_i$ where the sum of all the powers in a particular row is constant and at least one power is $m$. Each successive row (after the top one) is indexed by level \(0,1,2,...\). The top row consists only of  \([V_r^{(m)}] = V_r^{(m)}/V_r^{(m+1)}\) and is assigned level \(-1\).
Any general submodule in  a row of level \(n\) is given by 
\begin{equation}
    \left[\Biggl \langle \prod_{i=0}^{f-1}\theta_i^{j_i} \Biggr \rangle \right]\ \text{ with } \sum_{i=0}^{f-1}j_i=m+n;\ 0\le j_i \le m;\ \exists\ i\text{ such that }j_i=m . \label{level}
\end{equation}

For example, in the diagram above, the top object $[V_r^{(2)}]$ has level $-1$, and the submodules 
in the row of level $1$ must satisfy $j_0+j_1+j_2=2+1$ and hence $(j_0,j_1,j_2)$ is one of $(2,1,0)$, $(2,0,1)$, $(1,2,0)$, $(0,2,1)$, $(1,0,2)$, $(0,1,2).$ 

The containments are such that any submodule \([\langle f \rangle]\) in level \(n\) contains  \([\langle g\rangle]\) in 
level \(n+1\) if and only if \(f|g\) and this happens when $g=f\theta_i$ for some $i.$ 

\begin{theorem}\label{principal}
    For $r_i \geq m + mq + q$, \(V_r/V_r^{(m+1)}\) is decomposable into principal series. 
\end{theorem}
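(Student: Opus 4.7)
The plan is to prove Theorem~\ref{principal} by induction on $m \geq 0$. The base case $m = 0$ is precisely the fact that $V_r/V_r^* = V_r/V_r^{(1)}$ is a principal series, which is \cite[Theorem 1.3]{GhateJana+2025+1503+1543}. For the inductive step, assuming the theorem for $V_r/V_r^{(m)}$, it suffices to show $V_r^{(m)}/V_r^{(m+1)}$ is itself decomposable into principal series; concatenating the two filtrations then produces the required decomposition of $V_r/V_r^{(m+1)}$.

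To decompose $V_r^{(m)}/V_r^{(m+1)}$, I would use the theta filtration described above. Enumerate the lattice nodes $[\langle P\rangle]$ with $P = \prod_{l=0}^{f-1} \theta_l^{j_l}$ ranging over $\theta$-monomials satisfying $0 \leq j_l \leq m$ and $\max_l j_l = m$. Fix a total order $P_0,P_1,\ldots,P_N$ refining the partial order by level (where $P$ has level $\sum_l j_l - m$), processing higher-level (i.e. smaller) submodules first and using any order among same-level siblings. Setting $\mathcal{F}_t = \sum_{s \leq t} [\langle P_s\rangle]$ and its lift $\mathcal{F}_t^{\mathrm{lift}} = \sum_{s \leq t} \langle P_s\rangle \subset V_r$, Lemma~\ref{iso3} together with the second isomorphism theorem applied to the decomposition $\mathcal{F}_t^{\mathrm{lift}} = \mathcal{F}_{t-1}^{\mathrm{lift}} + \langle P_t\rangle$ yields
\[
\mathcal{F}_t / \mathcal{F}_{t-1} \;\cong\; \langle P_t \rangle \big/ \bigl( \langle P_t\rangle \cap (\mathcal{F}_{t-1}^{\mathrm{lift}} + V_r^{(m+1)}) \bigr).
\]

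The key step, and main obstacle, is identifying this denominator as $\sum_{i=0}^{f-1} \langle P_t \theta_i\rangle$, which places the sub-quotient in the framework of Proposition~\ref{iso1}. To establish this I would pass to the polynomial ring $\mathbb{F}_q[x_0,y_0,\ldots,x_{f-1},y_{f-1}]$ and exploit that $\theta_0,\ldots,\theta_{f-1}$ are pairwise coprime irreducible polynomials, so that intersections and colon operations on ideals generated by $\theta$-monomials are controlled by lcm and gcd. Writing $A$ for the ideal generated by the $P'$ appearing as generators of $\mathcal{F}_{t-1}^{\mathrm{lift}}$ together with the $\theta_i^{m+1}$, one computes $(A:P_t)$: any such $P'$ is either at strictly higher level than $P_t$ or a same-level sibling, hence $P' \nmid P_t$, so $P'/\gcd(P',P_t)$ is divisible by some $\theta_l$; similarly $\theta_i^{m+1}/\gcd(\theta_i^{m+1},P_t) = \theta_i^{m+1-j_i}$. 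Hence $(A:P_t) \subseteq (\theta_0,\ldots,\theta_{f-1})$. The reverse inclusion holds since each $\theta_i \in (A:P_t)$: if $j_i = m$ then $P_t\theta_i \in (\theta_i^{m+1})$; and if $j_i < m$ then $P_t\theta_i$ is itself a lattice node at the strictly higher level $\mathrm{level}(P_t)+1$, hence among the earlier-processed generators. Multiplying back by $P_t$ gives the claimed identification. The hypothesis $r_i \geq m + mq + q$ guarantees $r_i' = r_i - j_i - p j_{i+1} \geq q$ at every step, since $j_i, j_{i+1} \leq m$, so Proposition~\ref{iso1} applies and realizes each sub-quotient as a principal series, completing the induction.
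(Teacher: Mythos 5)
Your overall architecture is exactly the paper's: induction on $m$ via the exact sequence $0 \to V_r^{(m)}/V_r^{(m+1)} \to V_r/V_r^{(m+1)} \to V_r/V_r^{(m)} \to 0$, the same bottom-up enumeration of the theta-lattice nodes, the same reduction of each sub-quotient $\mathcal{F}_t/\mathcal{F}_{t-1}$ via Lemma~\ref{iso3} to a quotient of the shape in Proposition~\ref{iso1}, and the same bookkeeping showing $\theta_i P_t$ lies in the earlier-generated part (or in $V_r^{(m+1)}$ when $j_i=m$). You also correctly isolate the crux: identifying the denominator $\langle P_t\rangle \cap (\mathcal{F}_{t-1}^{\mathrm{lift}} + V_r^{(m+1)})$ with $\sum_i \langle P_t\theta_i\rangle$.

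But that crux is precisely where your argument has a genuine gap. You justify the inclusion $(A:P_t) \subseteq (\theta_0,\dots,\theta_{f-1})$ by computing the colon of each generator of $A$ separately and invoking that the $\theta_i$ are pairwise coprime irreducibles, so that everything is ``controlled by lcm and gcd.'' The colon of a sum of ideals is not the sum of the colons, and pairwise coprimality of irreducible generators does not rescue this: in $k[x,y]$ the elements $x$, $y$, $x+y$ are pairwise coprime irreducibles, yet $((x,y):(x+y)) = (1)$ while $(x:(x+y)) + (y:(x+y)) = (x,y)$. What you actually need is a stronger independence statement for the $\theta_i$ --- in effect that $\theta$-monomial ideals behave like monomial ideals, which would follow from the $\theta_i$ forming a regular sequence, a nontrivial fact you neither state nor prove (and which fails to even make sense as ``irreducibility'' for $f=1$, where $\theta_0 = x_0^p y_0 - x_0 y_0^p$ is reducible). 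This is exactly the point where the paper does real work: writing $QP_t = \sum_i A_i \theta_i^{m+1} + \cdots$, it applies the differential operators $\nabla_j$ of Ghate--Jana, evaluates on the Frobenius diagonal $\overrightarrow{(c,d)}$ to kill the right-hand side while extracting $Q(\overrightarrow{(c,d)})$ times a unit from the left, and then invokes Lemmas 2.15--2.17 of \cite{GhateJana+2025+1503+1543} to conclude $Q \in \langle\theta_0,\dots,\theta_{f-1}\rangle$. You need either to reproduce an argument of that kind or to prove the regular-sequence (or equivalent) property of the $\theta_i$ before the lcm/gcd calculus you rely on is legitimate.
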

\begin{proof}
    The proof is by induction on \(m\).
    The base case \(m=0\) follows from \cite{GhateJana+2025+1503+1543}, since \(V_r/V_r^*\) is itself a principal series.
    Assume the statement of the theorem holds for $m-1$ for some \(m \geq 1\). Consider the theta filtration of \(V_r/V_r^{(m+1)}\). 
    There is an exact sequence 
    $$0 \rightarrow V_r^{(m)}/V_r^{(m+1)} \rightarrow V_r/V_r^{(m+1)} \rightarrow V_r/V_r^{(m)} \rightarrow 0. $$
    By the induction hypothesis, the right most term \(V_r/V_r^{(m)}\) is decomposable into principal series. So, to complete the inductive step, it is sufficient to check that the left most term is 
    decomposable into principal series. 
    
    The bottom-most term of the leftmost term $V_r^{(m)}/V_r^{(m+1)}$ is 
    \begin{align*}
    [\langle \theta_0^m\theta_1^m\cdots\theta_{f-1}^m\rangle]&\cong\frac{\langle \theta_0^m\theta_1^m\cdots\theta_{f-1}^m\rangle}{ \langle \theta_0^{m+1},\theta_1^{m+1},\ldots,\theta_{f-1}^{m+1}\rangle\cap\langle \theta_0^m\theta_1^m\cdots\theta_{f-1}^m \rangle}.
    \end{align*}
    We claim that the denominator can be written as \begin{equation}\label{intersection}
\langle\theta_0^m\theta_1^m\cdots\theta_{f-1}^m\rangle \cap \langle\theta_0^{m+1},\theta_1^{m+1},\ldots,\theta_{f-1}^{m+1}\rangle = \Bigg\langle \bigcup_{l=0}^{f-1}
    \Bigg\{\left(\prod_{\substack{i=0\\ i\ne l}}^{f-1} \theta_i^{m}\right) \left(\theta_l^{m+1}\right)\Bigg\} \Bigg\rangle.\end{equation}
    It is clear that the right hand side is a subset of the left hand side. Now we prove the reverse containment. If an element $P$ lies in the intersection, then  we can write
    \begin{equation}
         P = Q\theta_0^m\theta_1^m\cdots\theta_{f-1}^m = A_0\theta_0^{m+1} + \cdots + A_{f-1}\theta_{f-1}^{m+1}\label{inteqn}
    \end{equation}
    where $Q,A_0,\cdots ,A_{f-1}$ are polynomials.
    We now apply the differential operator \(\nabla_j= a^{p^j}\frac{\partial}{\partial x_j} + b^{p^j}\frac{\partial}{\partial y_j}\) defined in \cite{GhateJana+2025+1503+1543} on both sides of \eqref{inteqn}. 
    Let $\alpha =\left(\begin{smallmatrix}
        a & b \\ c & d
    \end{smallmatrix}\right) \in \mathrm{GL}_2(\Fq)$ and let $\overrightarrow{(c,d)} =(c,d,c^p,d^p,\ldots,c^{p^{f-1}},d^{p^{f-1}}) $. 
    Then, by \cite[Lemma 2.14]{GhateJana+2025+1503+1543}, we have
    \begin{eqnarray*}
        \left(\prod_{i=0}^{f-1} \nabla_i^m\right)\!\!\!\!\!\! \!&&\!\!\!\!\!\!(Q\theta_0^m\theta_1^m\cdots\theta_{f-1}^m)\Big|_{\overrightarrow{(c,d)}}\\
        &=&\sum\limits_{k_{f-1}=0}^{m}\cdots\sum\limits_{k_0=0}^{m}\binom{m}{k_{f-1}}\cdots \binom{m}{k_0} \left( \left(\prod_{i=0}^{f-1} \nabla_i^{k_i}\right)(Q) \cdot \left(\prod_{i=0}^{f-1} \nabla_i^{m-k_i}\right) (\theta_0^m\theta_1^m\cdots\theta_{f-1}^m)\right)\Big |_{\overrightarrow{(c,d)}}\\ &=&0+ \left( Q\left(\prod_{i=0}^{f-1} \nabla_i^m\right)(\theta_0^m\theta_1^m\cdots\theta_{f-1}^m) \right)\Big|_{\overrightarrow{(c,d)}}
        \\
        &=& \left(Q  \cdot (m!)^f \prod_{i=0}^{f-1} (\nabla_i(\theta_i))^m \right)\Big|_{\overrightarrow{(c,d)}} \\
        &=& Q(\overrightarrow{(c,d)})(m!)^f\prod_{i=0}^{f-1} \mathrm{det} (\alpha)^{mp^i}. 
        \end{eqnarray*}
        Similarly, we have
        \begin{eqnarray*}
    \left(\prod_{i=0}^{f-1} \nabla_i^m\right)\!\!\!\!\!\!\! &&\!\!\!\!\!\!\left( \sum_j A_j \theta_j^{m+1}\right)\Big|_{\overrightarrow{(c,d)}} \\\\&=&\sum\limits_{j=0}^{f-1}\sum\limits_{k_{f-1}=0}^{m}\cdots\sum\limits_{k_0=0}^{m}\binom{m}{k_{f-1}}\cdots \binom{m}{k_0}\left( \left(\prod_{i=0}^{f-1} \nabla_i^{k_i}\right)(A_j) \cdot \left(\prod_{i=0}^{f-1} \nabla_i^{m-k_i}\right) (\theta_{j}^{m+1})\right)\Big |_{\overrightarrow{(c,d)}}\\
    &=& 0.
    \end{eqnarray*}
    Since $\mathrm{det} (\alpha) \not= 0 $  this implies $Q(c,d,c^{p},d^p,...,c^{p^{f-1}},d^{p^{f-1}}) = 0.$ Since this is true for arbitrary $\alpha =\left(\begin{smallmatrix}
        a & b \\ c & d
    \end{smallmatrix} \right) \in\mathrm{GL}_2(\Fq), $ by Lemmas 2.15, 2.16, 2.17  in \cite{GhateJana+2025+1503+1543} we know $Q \in \langle\theta_0,\ldots,\theta_{f-1}\rangle$. This implies $$P = Q\theta_0^m\cdots \theta_{f-1}^m \in \\ \Bigg\langle \bigcup_{l=0}^{f-1}
    \Bigg\{\left(\prod_{\substack{i=0\\ i\ne l}}^{f-1} \theta_i^{m}\right) \left(\theta_l^{m+1}\right)\Bigg\} \Bigg\rangle. $$
    Thus we have,
    \[[\langle \theta_0^m\theta_1^m\cdots\theta_{f-1}^m\rangle]=\frac{\langle \theta_0^m\theta_1^m\cdots\theta_{f-1}^m\rangle}{ \Big\langle \cup_{l=0}^{f-1}
    \Big\{\left(\prod_{\substack{i=0\\ i\ne l}}^{f-1} \theta_i^{m}\right) \left(\theta_l^{m+1}\right)\Big\} \Big\rangle}.\]
    Now, we use Proposition \ref{iso1} to conclude that the right hand side is a principal series.

Now we shall define a filtration of submodules on
$V_r^{(m)}/V_r^{(m+1)}$ for which every sub-quotient 
is a principal series. It would help to keep the
diagram above \eqref{level} in mind while reading
the discussion below. 
Enumerate the generators $P_0, P_1,P_2,...$ of the submodules in the filtration \eqref{level} as 
follows. Start with $P_0 = 0$. Then take $P_1 = \theta_0^{m}\cdots\theta_{f-1}^m$ from the second row from the bottom. Then move up by one row, and enumerate the generators of the modules from left to right. Repeat this process for each higher row. Let $$M_j := [\langle P_0,P_1,...,P_j\rangle].$$ 
The $M_j$ define an increasing sequence of submodules (an exhaustive increasing filtration) of  $V_r^{(m)}/V_r^{(m+1)}$.

We show that any sub-quotient in this filtration is a principal series. We have already just shown
that $M_1/M_0$ is a principal series. 
Suppose that $P_j=\prod_{i=0}^{f-1}\theta_i^{j_i}.$ 
By Lemma \ref{iso3}, we have 
  \[ \frac{M_j}{M_{j-1}} =\frac{[\langle P_0,..., P_j\rangle]}{[ \langle P_0,...,P_{j-1} \rangle]} \cong \frac{\langle P_j\rangle}{(\langle P_0,..., P_{j-1}\rangle+ V_r^{(m+1)})\cap \langle P_j \rangle}.\]
We claim that $$(\langle P_{0},..., P_{j-1}\rangle+ V_r^{(m+1)})\cap \langle P_j \rangle=\Bigg\langle\bigcup_{i=0}^{f-1}\left\{\prod_{\substack{l=0\\l\ne i}}^{f-1}\left(\theta_l^{j_l} \right)\left(\theta_i^{j_i+1} \right) \right\}\Bigg\rangle.$$
It is easy to see that the right hand side is contained in the left hand side. Indeed, if $j_i = m$, then 
$\prod_{\substack{l=0\\l\ne i}}^{f-1}\left(\theta_l^{j_l} \right)\left(\theta_i^{j_i+1} \right) \in V_r^{(m+1)}$, and if $j_i < m$ it lies
in $\langle P_0,..., P_{j-1} \rangle$ since it is
in a row below the row in which $P_j$ lies.
The proof of the fact that the left hand side is contained in the right hand side is similar to the proof that the intersection on the left hand side of \eqref{intersection} is contained in the right hand side of \eqref{intersection}. Thus
\[
    \frac{M_j}{M_{j-1}}\cong\frac{\Bigg\langle \prod_{l=0}^{f-1}\theta_l^{j_l} \Bigg \rangle}{\Bigg \langle \bigcup_{i=0}^{f-1}\left\{
    \left(\prod_{\substack{l=0\\l\ne i}}^{f-1}\theta_l^{j_l}\right)(\theta_i^{j_i+1})
    \right\}
    \Bigg \rangle}
\]
which, by Proposition \ref{iso1}, is a principal series.
It follows that \(V_r^{(m)}/V_r^{(m+1)}\) is decomposable into principal series. 

We conclude that \(V_r/V_r^{(m+1)}\) is decomposable into principal series by induction.
\end{proof}

Since the number of steps in the filtration above
is equal to the number of non-zero generators which in turn is obtained by deleting the hypercube with
$m$ subdivisions on each side from the one with
$m+1$ divisions on each side, we in fact obtain
the sharper result:

\begin{corollary}
  \label{number}
Let $m \geq 0$. Then
  \begin{itemize}
  \item \(V_r^{(m)}/V_r^{(m+1)}\) is decomposable into $(m+1)^f - m^f$ principal series.
  \item  \(V_r/V_r^{(m+1)}\) is decomposable into $(m+1)^f$ principal series. 
  \end{itemize}
\end{corollary}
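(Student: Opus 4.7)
The plan is to extract both counts directly from the filtration constructed in the proof of Theorem \ref{principal}. In that proof, the sub-quotients $M_j/M_{j-1}$ of the filtration on $V_r^{(m)}/V_r^{(m+1)}$ were shown to be principal series, one for each generator $P_j = \prod_{i=0}^{f-1}\theta_i^{j_i}$ indexing a node of the theta filtration diagram displayed before equation \eqref{level}. So the first bullet reduces to counting these nodes, and the second bullet reduces to combining the two counts via the short exact sequence that already appeared in the proof.

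For the first bullet, the nodes in the theta filtration of $V_r^{(m)}/V_r^{(m+1)}$ correspond precisely to tuples $(j_0,\ldots,j_{f-1})$ satisfying the conditions in \eqref{level}, namely $0 \le j_i \le m$ for all $i$ with at least one index $i$ attaining $j_i = m$. The total number of tuples with $0 \le j_i \le m$ is $(m+1)^f$, while the number of tuples that avoid the boundary value $m$ (that is, $0 \le j_i \le m-1$ for all $i$) is $m^f$. Subtracting, the number of admissible generators, and hence the number of principal series sub-quotients in the filtration $\{M_j\}$, is exactly $(m+1)^f - m^f$.

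For the second bullet, I would argue by induction on $m$. The base case $m = 0$ is just the fact that $V_r/V_r^{(1)} = V_r/V_r^*$ is a single principal series, by Ghate--Jana \cite{GhateJana+2025+1503+1543}, which agrees with $(0+1)^f = 1$. For the inductive step, the short exact sequence
\[
0 \to V_r^{(m)}/V_r^{(m+1)} \to V_r/V_r^{(m+1)} \to V_r/V_r^{(m)} \to 0
\]
used in the proof of Theorem~\ref{principal} lets one concatenate a filtration of $V_r^{(m)}/V_r^{(m+1)}$ (of length $(m+1)^f - m^f$ by the first bullet) with a filtration of $V_r/V_r^{(m)}$ (of length $m^f$ by the inductive hypothesis) to obtain a filtration of $V_r/V_r^{(m+1)}$ with $m^f + \bigl((m+1)^f - m^f\bigr) = (m+1)^f$ principal series sub-quotients.

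I do not expect any serious obstacle here; the statement is a bookkeeping corollary of the proof of Theorem~\ref{principal}. The only minor point to verify is that the enumeration $P_0 = 0, P_1, P_2, \ldots$ used in that proof produces exactly one non-trivial principal series sub-quotient for each non-zero generator, so that the starting term $M_0 = 0$ is not over-counted, and that the count $(m+1)^f - m^f$ correctly includes the bottom generator $\theta_0^m \cdots \theta_{f-1}^m$ whose quotient $M_1/M_0$ was the first principal series identified in the inductive step.
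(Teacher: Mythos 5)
Your proposal is correct and follows essentially the same route as the paper: the paper likewise obtains the first count by observing that the generators of the filtration in Theorem~\ref{principal} are the lattice points of the $(m+1)$-subdivided hypercube minus those of the $m$-subdivided one, i.e.\ the tuples $(j_0,\ldots,j_{f-1})$ with $0\le j_i\le m$ and some $j_i=m$, and the second count by accumulating these along the exact sequences $0\to V_r^{(k)}/V_r^{(k+1)}\to V_r/V_r^{(k+1)}\to V_r/V_r^{(k)}\to 0$. Your bookkeeping, including the check that $M_0=0$ contributes no sub-quotient, is accurate.
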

\section{Analogy between \(V_r/V_r^*\) and \(V_r/V_r^{**}\)}

We know that \(V_r/V_r^*\) is a principal series \cite{sandra}, \cite{GhateJana+2025+1503+1543}, and 
Breuil's notes  \cite{Breuil} describes the explicit structure of the (generically) $2^f$ irreducible representations in its socle filtration. By the previous section, \(V_r/V_r^{**}\) can be decomposed into $2^f$ principal series (see Corollary~\ref{number}). We may define a graph by connecting two of these principal series by an edge if they occur in an extension (of a natural kind that we shall describe below). 
In this section, we observe that both the structures are identical to a 
directed hypercube graph.

First, we recall the definition of the directed hypercube graph \(\bar{Q}_n\) on \(n\) vertices. The vertices are given by \(V=P(\{1,2,...,n\})\). Here \(P(S)\) denotes the power set of the set \(S\). There is an edge from a vertex \(u\) to a vertex \(v\) if \(u\subset v\) and \(|v|=|u|+1\), where \(|X|\) denotes the cardinality of the set \(X\).

\subsection{\(V_r/V_r^{*}\)}
First we examine the extensions between the irreducible sub-quotients of \(V_r/V_r^{*}\). In Breuil's notes \cite{Breuil}, Theorem 7.6 describes the extensions between the irreducible sub-quotients of a generic principal series. Before stating the theorem, we introduce some notation stated in the above reference. 

Let \(\mathcal{P}(x_0,...,x_{f-1})\) be the set of \(f\)-tuples \(\lambda=(\lambda_0(x_0),...,\lambda_{f-1}(x_{f-1}))\) defined as follows. If \(f=1\), \(\lambda_0(x_0)\in\{x_0,p-1-x_0\}\). If \(f>1\) then
\begin{enumerate}
    \item \(\lambda_i(x_i)\in\{x_i,x_i-1,p-2-x_i,p-1-x_i\}\) for all \(i\).
    \item If \(\lambda_i(x_i)\in\{x_i,\ x_{i}-1\}\), then \(\lambda_{i+1}(x_{i+1}) \in \{ x_{i+1},\ p-2-x_{i+1}\}\).
    \item If \(\lambda_i(x_i)\in\{p-2-x_i,\ p-1-x_{i}\}\), then \(\lambda_{i+1}(x_{i+1}) \in \{ p-1-x_{i+1},\ x_{i+1}-1\}\).
\end{enumerate}
    We adopt the conventions that \(x_f=x_0\) and \(\lambda_f(x_f)=\lambda_0(x_0)\).

    For \(\lambda \in \mathcal{P}(x_0,...,x_{f-1})\), define
    \[
    \mathcal{S}(\lambda):= \{i\in\{0,1,...,f-1\} \text{ such that } \lambda_i\in\{p-1-x_i,x_i-1\}\},
    \]
    \[
    l(\lambda):=|\mathcal{S}(\lambda)|;\text{ write }\lambda\le\lambda'\text{ if }\mathcal{S}(\lambda)\subset \mathcal{S}(\lambda').
    \]
We recall the part of Theorem 7.6 in \cite{Breuil}, which will be relevant in this work.
\begin{theorem}Let \(\chi: \begin{pmatrix}a &b\\0& d \end{pmatrix} \mapsto d^r,\ r\notin \{0,\ q-1\}\).
\begin{enumerate}\label{breuil}
    \item The irreducible sub-quotients of \(\mathrm{ind}_{B(\mathbb{F}_q)}^{G(\mathbb{F}_q)} \chi \) are all the distinct weights (twisted by some power of the determinant $D$):
    \[(\lambda_0(r_0),...,\lambda_{f-1}(r_{f-1}))\]
    for \(\lambda\in\mathcal{P}(x_0,...,x_{f-1})\), forgetting the weights such that \(\lambda_i<0\) for some \(i\).
    \item If \(\tau,\ \tau'\)are irreducible sub-quotients of \(\mathrm{ind}_{B(\mathbb{F}_q)}^{G(\mathbb{F}_q)} \chi \), we write \(\tau'\le\tau\) if the corresponding \(f\)-tuples \(\lambda',\ \lambda\) in (1) satisfy \(\lambda'\le \lambda\). Let \(\tau\) be an irreducible sub-quotient of \(\mathrm{ind}_{B(\mathbb{F}_q)}^{G(\mathbb{F}_q)}\chi\) and \(Q(\tau)\) the unique quotient with socle \(\tau\). Then the socle and co-socle filtrations on \(Q(\tau)\) are the same (up to renumbering), with graded pieces:
    \[
    (Q(\tau))_i=\bigoplus_{\substack{l(\tau')=i+l(\tau)\\ \tau\le \tau'}}\tau'
    \]
    for \(0\le i\le f-l(\tau)\).
\end{enumerate}
    
\end{theorem}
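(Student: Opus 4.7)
Since this theorem is explicitly quoted from Breuil's Columbia notes as Theorem 7.6 of \cite{Breuil}, my plan is to defer to that reference rather than give a new proof. The authors' strategy throughout this section has been to import the Jordan--H\"older and socle structure of principal series from \cite{Breuil}, \cite{BP}, and \cite{diamond} as a black box, and to combine it with the Clebsch--Gordan formulas proved earlier in the paper. Writing the full proof here would duplicate well-known material.

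Were one to sketch the argument in the spirit of \cite{Breuil}, part (1) on Jordan--H\"older constituents would proceed via the Steinberg tensor product theorem for $\mathrm{GL}_2(\mathbb{F}_q)$, which identifies irreducibles with weights $(s_0,\ldots,s_{f-1})\otimes \det^j$ for $0\le s_i\le p-1$, combined with a Mackey-style restriction of $\mathrm{ind}_{B(\mathbb{F}_q)}^{G(\mathbb{F}_q)}\chi$ back to $B(\mathbb{F}_q)$ using the Bruhat decomposition $G = B \sqcup BwB$. The combinatorial conditions (1)--(3) defining $\mathcal{P}(x_0,\ldots,x_{f-1})$ encode how Frobenius twists on adjacent tensor factors constrain which weights actually appear: a ``carry'' at position $i$ (where $\lambda_i \in \{p-2-x_i, p-1-x_i\}$) forces a specific compensation at position $i+1$, and dually for the ``non-carry'' case. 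This is where one invokes the explicit formula $r_0 + p r_1 + \cdots + p^{f-1}r_{f-1}$ for $r$ mod $(q-1)$ and tracks base-$p$ digits.

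Part (2) on the socle filtration is the substantive piece. The key input is the computation of $\mathrm{Ext}^1(\tau',\tau)$ between irreducibles, which one shows is at most one-dimensional and is nonzero precisely when $\mathcal{S}(\tau')$ and $\mathcal{S}(\tau)$ differ by removal of a single element with $\tau \le \tau'$. This is typically accomplished either by an explicit injective/projective resolution in the category of $\mathrm{GL}_2(\mathbb{F}_q)$-modules, or equivalently by a weight-space computation coupled with the identification of the projective cover of each irreducible. Once these extension data are in hand, the socle filtration of $Q(\tau)$ is built up inductively by pushing out along the single-step extensions, so the $i$-th graded piece is the direct sum of all $\tau'$ with $\tau \le \tau'$ and $l(\tau') = i + l(\tau)$, producing the hypercube-like picture.

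The main obstacle in a ground-up proof would be the $\mathrm{Ext}^1$ calculation and the verification that higher extensions split as the formula predicts; the combinatorial bookkeeping, while intricate, is formal once the single-step extensions are understood. In the present paper, since the result is only invoked to read off the four Jordan--H\"older factors of the relevant principal series and to identify which pairs could support non-split extensions in the picture of $V_r/V_r^{**}$, a citation to \cite[Theorem 7.6]{Breuil} (together with \cite{BP} and \cite{diamond}) suffices.
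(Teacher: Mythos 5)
The paper gives no proof of this statement: it is recalled verbatim from Breuil's notes \cite[Theorem 7.6]{Breuil} and used as an external input, which is exactly what you propose. Your deferral to the reference therefore matches the paper's treatment, and your sketch of how the proof would go is a reasonable (if unverified here) outline of the standard argument.
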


Since we know the set of irreducible sub-quotients by the above theorem, and we want to find a bijection with \(P(\{1,2,...,f\})\), we prove the following lemma.
    \begin{lemma}\label{bijection}
        Given \(X\subset \{1,2,...,f\}\), there exists a unique \(\lambda_{X} \in \mathcal{P}(x_0,...,x_{f-1})\), such that \(\mathcal{S}(\lambda_{X})=X.\) 
    \end{lemma}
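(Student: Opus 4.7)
The plan is to reduce the choice of each component $\lambda_i$ to two independent binary bits and then show that the constraints (2) and (3) defining $\mathcal{P}$ exactly couple one bit to the other across consecutive indices, so that specifying $\mathcal{S}(\lambda)$ determines everything.

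More precisely, I would introduce for each $\lambda \in \mathcal{P}(x_0,\ldots,x_{f-1})$ two bits: let $s_i = 1$ if $\lambda_i \in \{p-1-x_i,\, x_i-1\}$ and $s_i = 0$ otherwise (so $s_i$ records whether $i \in \mathcal{S}(\lambda)$); and let $t_i = 1$ if $\lambda_i \in \{p-2-x_i,\, p-1-x_i\}$ and $t_i = 0$ otherwise (so $t_i$ records whether $\lambda_i$ is a ``negative'' type). The four admissible values of $\lambda_i$ correspond bijectively to the four pairs $(s_i,t_i) \in \{0,1\}^2$ via
\[
(0,0)\mapsto x_i,\quad (1,0)\mapsto x_i-1,\quad (0,1)\mapsto p-2-x_i,\quad (1,1)\mapsto p-1-x_i.
\]

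The key observation is that conditions (2) and (3) in the definition of $\mathcal{P}$ are exactly the statement
\[
s_{i+1} = t_i \qquad \text{for all } i \in \{0,1,\ldots,f-1\},
\]
with indices read cyclically ($s_f = s_0$). Indeed, (2) says that $t_i=0$ forces $\lambda_{i+1} \in \{x_{i+1},p-2-x_{i+1}\}$, i.e.\ $s_{i+1}=0$, and (3) says that $t_i=1$ forces $\lambda_{i+1} \in \{p-1-x_{i+1},x_{i+1}-1\}$, i.e.\ $s_{i+1}=1$.

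Given $X \subset \{0,1,\ldots,f-1\}$ (after the obvious identification with $\{1,\ldots,f\}$), I would set $s_i := \mathbf{1}_{i \in X}$, force $t_i := s_{i+1}$ by the cyclic relation, and read off $\lambda_i$ from the pair $(s_i,t_i)$ using the dictionary above. By construction $\mathcal{S}(\lambda_X) = X$, and the cyclic relation $s_{i+1}=t_i$ holds, so $\lambda_X \in \mathcal{P}$. Conversely, for any $\lambda \in \mathcal{P}$ with $\mathcal{S}(\lambda)=X$ the bits $s_i$ are prescribed, hence $t_i = s_{i+1}$ is prescribed by the constraint, and then each $\lambda_i$ is determined by $(s_i,t_i)$, giving uniqueness. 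The only subtle point worth checking is the consistency of the cyclic convention $\lambda_f = \lambda_0$ stated in the definition, but since the relation $s_{i+1}=t_i$ is imposed for every $i$ in a cycle, the system is self-consistent with no extra constraint, and existence never fails. I do not anticipate any real obstacle; the whole content of the lemma is simply that $\mathcal{P}$ is parametrized by the ``$s$-bits'' with the ``$t$-bits'' forced cyclically, so $|\mathcal{P}| = 2^f$ and the map $\lambda \mapsto \mathcal{S}(\lambda)$ is a bijection onto $P(\{1,\ldots,f\})$.
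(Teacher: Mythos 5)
Your proposal is correct and is essentially the same argument as the paper's: the paper's "parity" of $\lambda_i$ is your bit $s_i$, and its brute-force check that each pair (parity of $\lambda_i$, forced parity of $\lambda_{i+1}$) determines $\lambda_i$ uniquely is exactly your dictionary $(s_i,t_i)\mapsto\lambda_i$ together with the observation that conditions (2) and (3) amount to $s_{i+1}=t_i$. Your two-bit formulation just makes the coupling slightly more explicit.
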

    \begin{proof}
        For this proof, if \(\lambda_i(x_i)\in\{p-1-x_i,\ x_i-1\}\), we will say that \(\lambda_i\) has parity \(1\), else we say that \(\lambda_i\) has parity \(0\). First, we prove that for any \(a,\ b \in \{0,1\}\), there exists unique choice of \(\lambda_i(x_i)\) such that parity of \(\lambda_i(x_i)=a\) and possible parity of \(\lambda_{i+1}(x_{i+1})=b\). This is true because we can check by brute force that for each of the four choices of \(a, b\), this is true. For example, if \(a=0,\) \( b=0\), then the only choice of \(\lambda_i(x_i)\) is \(x_i\).

        Now given \(X\subset \{0,1,...,f\}\), if we want \(\lambda_{X}\) such that \(\mathcal{S}(\lambda_{X})=X\), we have already fixed the parity of each of the \(\lambda_i(x_i)\)'s. Hence the value of the \(\lambda_i(x_i)\)'s are also uniquely determined. Thus, the entire tuple \(\lambda_{X}\) is uniquely defined. Hence proved.
    \end{proof}
We conclude this sub-section with the following proposition. Note that the claim is not true for all \(V_r\).

\begin{proposition} Write \(r\equiv\sum_{i=0}^{f-1}a_ip^i\) mod $(q-1)$, with \(0\le a_i<p\) for all \(i\). Assume \(a_i\notin \{0,p-1\} \) for any \(i\). Let \(V\) be the set of irreducible sub-quotients of \(V_r/V_r^{*}\). Let \(E=\{(u,v)\in V^2|\ u\text{ has an extension over v} \}\). Then, the directed graph \(G(V,E)\) is isomorphic to \(\bar{Q}_f\).
\end{proposition}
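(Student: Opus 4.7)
The plan is to use Breuil's parameterization of the irreducible sub-quotients (Theorem \ref{breuil}(1)) together with Lemma \ref{bijection} to identify the vertex set $V$ with $P(\{1,\ldots,f\})$, and then to extract the edge set from the socle filtration recorded in Theorem \ref{breuil}(2).

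First, I would observe that by Theorem \ref{breuil}(1), the irreducible constituents of $V_r/V_r^{*}\cong\mathrm{ind}_{B(\mathbb{F}_q)}^{G(\mathbb{F}_q)}d^{r}$ are indexed by tuples $\lambda\in\mathcal{P}(x_0,\ldots,x_{f-1})$ via $\lambda\mapsto(\lambda_0(a_0),\ldots,\lambda_{f-1}(a_{f-1}))$ (up to an explicit determinant twist). The assumption $a_i\notin\{0,p-1\}$ for all $i$ is exactly what is needed to guarantee that every evaluation $\lambda_i(a_i)$ is a non-negative integer at most $p-1$, so that no constituent is discarded and $|V|=2^f$. By Lemma \ref{bijection}, the map $\lambda\mapsto\mathcal{S}(\lambda)$ is a bijection between $\mathcal{P}(x_0,\ldots,x_{f-1})$ and $P(\{1,\ldots,f\})$. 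Composing gives a bijection $\Phi:V\to P(\{1,\ldots,f\})$ which identifies the vertex set of $G(V,E)$ with that of $\bar{Q}_f$.

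Next, I would translate the edge condition. By Theorem \ref{breuil}(2), for each $\tau\in V$ there is a unique quotient $Q(\tau)$ of $V_r/V_r^{*}$ with socle $\tau$, whose $i$-th socle layer is $(Q(\tau))_i=\bigoplus_{\tau\le\tau',\,l(\tau')=l(\tau)+i}\tau'$. Hence a non-split extension $0\to\tau\to E\to\tau'\to 0$ appears inside $V_r/V_r^{*}$ precisely when $\tau'$ lies in the layer immediately above the socle of $Q(\tau)$, that is, when $\mathcal{S}(\lambda_\tau)\subsetneq\mathcal{S}(\lambda_{\tau'})$ and $l(\lambda_{\tau'})=l(\lambda_\tau)+1$. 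Through $\Phi$ this is the cover relation in the Boolean lattice $P(\{1,\ldots,f\})$, which is the defining incidence of $\bar{Q}_f$. Depending on whether ``$u$ has an extension over $v$'' is interpreted with $u$ as quotient or as submodule, one may need to post-compose with the complementation $S\mapsto\{1,\ldots,f\}\setminus S$, which is an isomorphism between $\bar{Q}_f$ and its edge-reversal.

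The main obstacle I anticipate is justifying that the existence of a non-split extension between two constituents is equivalent to the cover relation $l(\lambda_{\tau'})=l(\lambda_\tau)+1$, and not merely to the partial order $\lambda_\tau\le\lambda_{\tau'}$. The key input is the rigidity statement in Theorem \ref{breuil}(2), namely that the socle and co-socle filtrations of $Q(\tau)$ coincide up to renumbering: any length-two subquotient of $V_r/V_r^{*}$ with $\tau$ as submodule and $\tau'$ as quotient forces $\tau'$ to sit in the layer immediately above $\tau$ in $Q(\tau)$, yielding the cover relation. Putting the two steps together, $\Phi$ is a directed graph isomorphism $G(V,E)\cong\bar{Q}_f$.
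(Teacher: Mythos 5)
Your proposal is correct and follows essentially the same route as the paper: identify $V_r/V_r^{*}$ with $\mathrm{ind}_{B(\mathbb{F}_q)}^{G(\mathbb{F}_q)}d^{r}$, use the genericity hypothesis $a_i\notin\{0,p-1\}$ plus Lemma \ref{bijection} to biject the constituents with $P(\{1,\ldots,f\})$, and read off the edges from the $i=1$ layer of the socle filtration in Theorem \ref{breuil}(2). Your added remark justifying why only the cover relation (and not the full partial order) yields extensions is a point the paper leaves implicit, but it is the same argument.
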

\begin{proof}
    By \cite{GhateJana+2025+1503+1543}, we know that \(V_r/V_r^{*}\cong \text{ind}_{B(\mathbb{F}_q)}^{G(\mathbb{F}_q)}d^r\). Notice that if \(a_i\notin \{0,p-1\} \), for every possible value of \(\lambda_i(x_i)\), then \(\lambda_i(a_i)\ge0\), hence for every \(\lambda \in \mathcal P (x_0,...,x_{f-1})\), \((\lambda_0(a_0),...,\lambda_{f-1}(a_{f-1}))\) appears in \(\text{ind}_{B(\mathbb{F}_q)}^{G(\mathbb{F}_q)}d^r\) up to twist.

    By Lemma \ref{bijection}, the map \(X\mapsto \lambda_{X}\) is a bijection. Since each \(\lambda_{X}\) corresponds to an irreducible sub-quotient \(\tau_X\) of \(\text{ind}_{B(\mathbb{F}_q)}^{G(\mathbb{F}_q)}d^r\), this establishes a bijection between the vertices of \(\bar{Q}_f\) and \(V\) where \(X\mapsto \tau_X\). Now, we need to establish the correspondence between the edges. Substituting \(i=1\) in part (2) of \ref{breuil}, we obtain 
    \[
    (Q(\tau))_1=\bigoplus_{\substack{l(\tau')=1+l(\tau)\\ \tau\le \tau'}}\tau'.
    \]
    So, \(\tau_X\) extends over \(\tau_{X'}\) if and only if the corresponding \(\lambda_X, \ \lambda_{X'}\) satisfy
    \[
    |\mathcal{S}(\lambda_{X'})|=1+|\mathcal{S}(\lambda_X)|;\  \lambda_X\le \lambda_{X'}.
    \]
    The above happens if and only if \(X,\ X'\) satisfy 
    \[
        |X'|=|X|+1;\ X\subset X'.
    \]
    The above conditions are identical to the conditions for the existence of an edge from \(X\) to \(X'\) in \(\bar{Q}_f\). So there is an edge from \(\tau_{X}\) to \(\tau_{X'}\) in \(G(V,E)\) if and only if there is an edge from \(X\) to \(X'\) in \(\bar{Q}_f\). Hence the map \(X\mapsto \tau_X\) is a graph isomorphism from \(\bar{Q}_f\) to \(G(V,E)\).
\end{proof}

\subsection{\(V_r/V_r^{**}\)}

Now, we show that the extensions between certain `adjacent' principal series representations present in \(V_r/V_r^{**}\) as sub-quotients form a directed hypercube graph.

\begin{proposition}
    Let \(V\) be the set of principal series representation 
     present as a sub-quotient in  \(V_r/V_r^{**}\) of the
     form \eqref{VX} below. Let \(E=\{(u,v)\in V^2|\ u\) has an extension by \(v\) of the form \eqref{extension} below\(\}\). Then, the directed graph \(G(V,E)\) is isomorphic to \(\bar{Q}_f\).
\end{proposition}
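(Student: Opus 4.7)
The plan is to identify the $2^f$ principal series sub-quotients of $V_r/V_r^{**}$ arising from the theta filtration (with $m=1$) with the vertex set of $\bar{Q}_f$, and then match the natural two-layer extensions with the covering edges of the hypercube.

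First, for the vertex bijection: by Corollary~\ref{number} applied with $m=1$, the filtration $\{M_j\}$ constructed in the proof of Theorem~\ref{principal} produces exactly $2^f$ principal series sub-quotients of $V_r/V_r^{**}$, one for each subset $X \subseteq \{0,1,\ldots,f-1\}$. For each such $X$, I would set
$$V_X \;:=\; \frac{\bigl[\bigl\langle \textstyle\prod_{i\in X}\theta_i\bigr\rangle\bigr]}{\bigl[\bigl\langle \bigcup_{j\notin X}\bigl\{\theta_j\prod_{i\in X}\theta_i\bigr\}\bigr\rangle\bigr]},$$
which is a principal series by Proposition~\ref{iso1}. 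Identifying $\{0,\ldots,f-1\}$ with $\{1,\ldots,f\}$, the assignment $X \mapsto V_X$ is a bijection between $P(\{1,\ldots,f\})$ and $V$.

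Second, for the edges: fix $X$ and $j \notin X$, and write $Y = X \cup \{j\}$. The natural extension of $V_X$ by $V_Y$ — the one of the form \eqref{extension} — should be realized by the two-layer sub-quotient
$$N_{X,j} \;:=\; \frac{\bigl[\bigl\langle \textstyle\prod_{i\in X}\theta_i\bigr\rangle\bigr]}{\bigl[\bigl\langle\bigcup_{k\notin Y}\bigl\{\theta_k\prod_{i\in X}\theta_i\bigr\}\bigr\rangle\bigr]}$$
of $V_r/V_r^{**}$, inside which the image of $[\langle \theta_j\prod_{i\in X}\theta_i\rangle]=[\langle \prod_{i\in Y}\theta_i\rangle]$ is a submodule isomorphic to $V_Y$ with quotient $V_X$ (computed using Lemma~\ref{iso3} and Proposition~\ref{iso1}). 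This produces a directed edge from $V_X$ to $V_Y$ in $G(V,E)$, matching the covering relation $X \subsetneq Y$ in $\bar{Q}_f$. Conversely, any extension of the form \eqref{extension} must arise from such an adjacent pair of layers in the theta filtration, which differ by multiplication by exactly one $\theta_j$, so no other edges appear.

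The main obstacle is to verify that each $N_{X,j}$ is a genuinely non-split extension (so that it contributes an actual edge to $G(V,E)$), and that the short exact sequence $0 \to V_Y \to N_{X,j} \to V_X \to 0$ identifies correctly under the isomorphisms of Lemma~\ref{iso3} and Proposition~\ref{iso1}. The non-splitness is expected to follow by comparing Jordan--H\"older factors — accessible via Corollary~\ref{cgtheorem} and its generalizations in Section~2 — against the non-trivial extension criterion in \cite[Theorem~11.4]{Breuil}, in the spirit of the diamond analysis carried out for $f=2$ in Section~3.1. Once these points are settled, $X \mapsto V_X$ becomes an isomorphism of directed graphs from $\bar{Q}_f$ to $G(V,E)$.
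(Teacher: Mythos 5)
Your proposal is correct and follows essentially the same route as the paper: the same vertices $V_X$ indexed by subsets via products of the $\theta_i$, the same two-layer sub-quotients realizing the extensions (your $N_{X,j}$ coincides with the paper's $E_{X,Y}$, since the children of $Y$ you omit from the denominator are already contained in the submodules generated by the other children of $X$), and the same identification with the covering relation of $\bar{Q}_f$. The one point where you go beyond the paper is the ``main obstacle'' of non-splitness: the edge set $E$ in the statement only requires the \emph{existence} of an extension of the form \eqref{extension} (i.e., of the short exact sequence), not that it be non-split, so that verification is not needed and indeed the paper does not attempt it.
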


\begin{proof}
    Consider the theta
    filtration on $V_r/V_r^{**}$ from the previous section. 
    The level \(n\) part of the 
    filtration of \(V_r/V_r^{**}\) is
\[\left\{\left[\Biggl \langle \prod_{i=0}^{f-1}\theta_i^{j_i} \Biggr \rangle \right]\ \Bigg| \sum_{i=0}^{f-1}j_i=n+1;\ 0\le j_i \le 1
\right\}\]
for $-1 \leq  n \leq f-1$.
Since all the \(j_i\)'s are \(0\) or \(1\), the above set can also be written as 
    \[\left\{\left[\Biggl \langle \prod_{i\in X}\theta_i \Biggr \rangle \right]\ \Bigg|\ X \subset \{1,2,...,f\},\ |X|=n+1 \right\}.\]

For any \(X \subset \{1,2,...,f\}\), let \(P_X\) denote the polynomial \(\prod_{i\in X}\theta_i\). If \(X=\phi\), then \(P_X\) is the constant polynomial \(1\). Let \(V_X\) denote the principal series sub-quotient of \(V_r/V_r^{**}\) defined by 
\begin{eqnarray}
      \label{VX}
V_X = \dfrac{[\langle P_X \rangle]}{[\langle P_1, P_2, \ldots, P_t \rangle]} \cong \frac{\langle P_X \rangle}{(\langle P_1, P_2,\ldots,P_{t} \rangle + V_r^{(m+1)})\cap\langle P_X \rangle},
\end{eqnarray}
where $P_1, P_2, \ldots, P_t$ are the $P_Y$ with $X \subset Y$ and $|Y|=|X|+1$. The fact that $V_X$ is indeed a principal series follows since simplifying the denominator of the rightmost subquotient in (\ref{VX}) shows that \(V_X\) is of the form \(M_j/M_{j-1}\), which was defined and proved to be a principal series in the proof of Theorem \ref{principal}. Given such $X$ and $Y$ with $P_Y$ equal to say $P_1$,
there is an extension 
\begin{eqnarray}
    \label{extension}
  0\rightarrow V_Y \cong \dfrac{[\langle P_Y, P_2, \ldots, P_t \rangle]}{[\langle P_2, \ldots, P_t, Q_1, Q_2, \ldots, Q_k \rangle]} \rightarrow E_{X,Y} = 
\dfrac{[\langle P_X \rangle]}{[\langle P_2, \ldots, P_t, Q_1, Q_2, \ldots, Q_k \rangle]}
\rightarrow V_X \rightarrow 0
\end{eqnarray}
where $Q_1, Q_2, \ldots, Q_k$ are the $P_{Y'}$ for the
sets $Y \subset Y'$ with  $|Y'|=|Y|+1$.


Clearly the map \(X\mapsto V_X\) is a graph isomorphism from \(\bar{Q}_f\) to \(G(V,E)\). 
\end{proof}

\vspace{.2cm}

{\bf \noindent Acknowledgments:} This paper was written 
for the proceedings of the International Conference RAMRA held at SRM University in January 2025. EG
thanks the organizers K. Chakraborty and K. Banerjee for 
the invitation. This paper grew out of a question asked by 
SP while a postdoctoral fellow at TIFR. It was answered jointly with SB and SV who were VSRP students at TIFR in 
2025. 

\bibliographystyle{abbrv}

\bibliography{ref}

\begin{thebibliography}{1}

\bibitem{doi:10.1142/S0219498826502701}
S.~Bhattacharya and A.~Ganguli.
\newblock Weights for mod $p$ quaternionic forms in the unramified case.
\newblock {\em \textnormal{To appear in }J. Algebra Appl.}, 2025.

\bibitem{Breuil}
C.~Breuil.
\newblock \textit{Representations of Galois and of $\mathrm{GL}_2$ in characteristic $p$}.
\newblock Lecture notes of a graduate course at Columbia University (Fall 2007).

\bibitem{BP}
C.~Breuil and V.~Pa{\v{s}}k{\={u}}nas.
\newblock Towards a modulo {$p$} {L}anglands correspondence for {${\rm GL}_2$}.
\newblock {\em Mem. Amer. Math. Soc.}, 216(1016):vi+114, 2012.

\bibitem{diamond}
F.~Diamond.
\newblock A correspondence between representations of local {G}alois groups and {L}ie-type groups.
\newblock In {\em {$L$}-functions and {G}alois representations}, volume 320 of {\em London Math. Soc. Lecture Note Ser.}, pages 187--206. Cambridge Univ. Press, Cambridge, 2007.

\bibitem{GhateJana+2025+1503+1543}
E.~Ghate and A.~Jana.
\newblock Modular representations of $\mathrm{GL}_2(\mathbb{F}_q)$ using calculus.
\newblock {\em Forum Mathematicum}, 37(5):1503--1543, 2025.

\bibitem{glover}
D.~J. Glover.
\newblock A study of certain modular representations.
\newblock {\em J. Algebra}, 51(2):425--475, 1978.

\bibitem{F.M.}
F.~M. Kouwenhoven.
\newblock Indecomposable representations of {$M(2,\bold F_q)$} over {$\bold F_q$}.
\newblock {\em J. Algebra}, 155(2):369--396, 1993.

\bibitem{sandra}
S.~Rozensztajn.
\newblock Asymptotic values of modular multiplicities for {$\rm GL_2$}.
\newblock {\em J. Th\'eor. Nombres Bordeaux}, 26(2):465--482, 2014.

\end{thebibliography}


\end{document}